\documentclass[11pt,letterpaper,reqno]{amsart}

\usepackage[margin=1.08in]{geometry}
\usepackage{amsmath,amssymb,amsthm,mathtools,mathrsfs}
\usepackage{microtype}
\usepackage[T1]{fontenc}
\usepackage[utf8]{inputenc}
\usepackage{lmodern}
\usepackage{enumitem}
\usepackage{xcolor}
\usepackage{hyperref}
\usepackage{aliascnt}

\hypersetup{
  colorlinks=true,
  linkcolor=black,
  citecolor=black,
  urlcolor=black,
  pdftitle={Symmetric Kahane--Salem--Zygmund Inequalities and the Supremum Norm},
  pdfauthor={Daniel M. Pellegrino, Anselmo B. Raposo Jr., Eduardo V. Teixeira}
}

\allowdisplaybreaks
\numberwithin{equation}{section}
\setlist{itemsep=0.25em,topsep=0.45em}
\newtheorem{maintheorem}{Theorem}
\newtheorem{theorem}{Theorem}[section]
\newaliascnt{proposition}{theorem}
\newtheorem{proposition}[proposition]{Proposition}
\aliascntresetthe{proposition}
\newaliascnt{lemma}{theorem}
\newtheorem{lemma}[lemma]{Lemma}
\aliascntresetthe{lemma}
\newaliascnt{corollary}{theorem}
\newtheorem{corollary}[corollary]{Corollary}
\aliascntresetthe{corollary}
\theoremstyle{definition}
\newaliascnt{definition}{theorem}

\aliascntresetthe{definition}
\theoremstyle{remark}
\newaliascnt{remark}{theorem}
\newtheorem{remark}[remark]{Remark}
\aliascntresetthe{remark}
\newaliascnt{question}{theorem}

\aliascntresetthe{question}

\newcommand{\R}{\mathbb R}

\newcommand{\sym}{\mathrm{sym}}
\newcommand{\per}{\operatorname{per}}
\newcommand{\distH}{d_{\mathrm H}}

\title[Symmetric KSZ inequalities]{Symmetric Kahane--Salem--Zygmund Inequalities and the Supremum Norm}

\author[D. M. Pellegrino]{Daniel M. Pellegrino}
\author[A. B. Raposo Jr.]{Anselmo B. Raposo Jr.}
\author[E. V. Teixeira]{Eduardo V. Teixeira}

\subjclass[2020]{Primary 46B28, 60G15; Secondary 15A15, 15A60, 05B20, 42C10}
\keywords{Kahane--Salem--Zygmund inequality, symmetric multilinear form, unimodular coefficients, supremum norm, permanent, Walsh analysis, Hadamard matrix, Gaussian width}

\begin{document}

\begin{abstract}
The Kahane--Salem--Zygmund inequality provides unimodular $m$-linear forms with small supremum norm.  We consider its dimension-free symmetric constant $C_m^{\sym}$ over the real scalar field, defined by the requirement that, for every $n$, some real symmetric unimodular $m$-linear form $A:(\ell_\infty^n)^m\to\mathbb R$ satisfy
\[
 \|A\|\le C_m^{\sym}n^{(m+1)/2}.
\]
For complex scalars, Boas obtained under permutation symmetry an upper bound of order at most $\sqrt{m\log m}\,\sqrt{m!}$; over the real scalar field, an elementary argument gives the sharper order $\sqrt m\,\sqrt{m!}$.  We prove
\[
 C_m^{\sym}\ge
 \left(\sqrt{\frac2e}+o(1)\right)\frac{\sqrt{m!}}m,
\]
using the square-free Walsh spectrum of the diagonal polynomial.  In the opposite direction, we establish
\[
 C_m^{\sym}\le C_0\sqrt{m!},
\]
where $C_0$ is absolute, removing the factor $\sqrt m$ from the real upper bound.  This estimate follows from a geometric argument in which rigidity for Gram permanents reduces the relevant configurations to sets controlled by Gaussian width.  For unrestricted unimodular forms, we also obtain a quantitative rectangular estimate from truncated Hadamard matrices; in equal dimension, the normalized minimum is at most $1+o(1)$ whenever $m=o(n^{5/6})$.
\end{abstract}

\maketitle

\begingroup
\hypersetup{linkcolor=black}
\color{black}
\small
\setcounter{tocdepth}{1}
\setlength{\parskip}{0pt}
\vspace{-1.5em}
\tableofcontents
\endgroup

\section{Introduction and main results}
\label{sec:introduction}

Let $m,n_1,\ldots,n_m$ be positive integers.  The real Kahane--Salem--Zygmund problem asks for signs
$\varepsilon_{i_1,\ldots,i_m}\in\{-1,1\}$ for which
\[
 A(x^{(1)},\ldots,x^{(m)})
 =
 \sum_{i_1=1}^{n_1}\cdots\sum_{i_m=1}^{n_m}
 \varepsilon_{i_1,\ldots,i_m}
 x^{(1)}_{i_1}\cdots x^{(m)}_{i_m}
\]
satisfies
\begin{equation}\label{eq:classical-ksz-intro}
 \|A\|
 \le C_m
 \max_{1\le j\le m}n_j^{1/2}
 \prod_{j=1}^m n_j^{1/2}.
\end{equation}
Here all variables are real, and
\[
 \|A\|:=\sup_{\|x^{(j)}\|_\infty\le1\ (1\le j\le m)}
 |A(x^{(1)},\ldots,x^{(m)})|.
\]
An anisotropic form of this estimate follows from
\cite[Proposition~2.3]{AlbuquerqueRezende}, using
$\sum_{j=1}^m n_j\le m\max_j n_j$.  When $n_1=\cdots=n_m=n$, the right-hand side has order $n^{(m+1)/2}$.  The inequality originates in the work of Salem--Zygmund and Kahane.  Multilinear versions and related forms appear in
\cite{BennettGoodmanNewman,Kahane,ManteroTonge,SalemZygmund,Varopoulos}; see also
\cite{DefantMastylo,MastyloSzwedek} for Banach-space formulations.

Denote by $S_{m,\mathbf n}$ the minimum of $\|A\|$ among all real sign forms in dimensions $\mathbf n=(n_1,\ldots,n_m)$, and write $S_{m,n}$ in the equal-dimensional case.  For fixed $m\ge2$, the normalized minima $S_{m,\mathbf n}/(\max_j n_j^{1/2}\prod_j n_j^{1/2})$ are asymptotically bounded above by $1$ as all dimensions tend to infinity; see \cite[Theorem~1.1]{PellegrinoRaposo}.  In fact, only the second-smallest dimension needs to tend to infinity, by \cite[Theorem~3.9]{PellegrinoRaposo}.  Theorem~\ref{thm:unrestricted-summary} gives an explicit quantitative estimate that also allows $m$ to vary.

The symmetric problem is obtained by requiring permutation-invariant coefficients.  In equal dimensions this means
\[
 \varepsilon_{i_{\sigma(1)},\ldots,i_{\sigma(m)}}
 =\varepsilon_{i_1,\ldots,i_m}
 \qquad(\sigma\in S_m).
\]
Let $S_{m,n}^{\sym}$ be the least norm of a form satisfying this condition,
and define
\[
 C_m^{\sym}:=
 \sup_{n\ge1}\frac{S_{m,n}^{\sym}}{n^{(m+1)/2}}.
\]
Boas proved the estimate
\begin{equation}\label{eq:boas-intro}
 S_{m,n}^{\sym}
 \le
 \sqrt{32m\log(6m)}\,\sqrt{m!}\,n^{(m+1)/2};
\end{equation}
see \cite[Theorem~4]{Boas}.  Although \cite[Theorem~4]{Boas} is stated over the complex scalars, the coefficients used there are real signs; restricting the variables to $\mathbb R^n$ gives \eqref{eq:boas-intro}.  Over the real cube, an elementary argument already gives
\[
 C_m^{\sym}\le\sqrt{2m\log2}\,\sqrt{m!}.
\]
Indeed, assign independent signs to the permutation orbits.  If an orbit
$\mathcal O$ has cardinality $d_{\mathcal O}$, then the variance proxy at
each cube configuration is at most
$\sum_{\mathcal O}d_{\mathcal O}^2\le m!n^m$.
By changing the overall sign of each input vector, we may fix its first
coordinate to be $1$ without changing the absolute value of the form.
There are then $N=2^{m(n-1)}$ configurations to test.  The elementary
subgaussian maximum estimate in Lemma~\ref{lem:subgaussian-max} bounds the
expected norm by $\sqrt{2m!n^m\log(2N)}$, which gives the displayed bound
because $[m(n-1)+1]/n\le m$.
Theorem~A removes the factor $\sqrt m$ from this real-cube bound and the
factor $\sqrt{m\log m}$ from the quoted estimate of Boas.

\begingroup
\renewcommand{\themaintheorem}{A}
\begin{maintheorem}[The symmetric constant]\label{thm:main}
There is an absolute constant $C_0>0$ such that, for every $m\ge3$,
\begin{equation}\label{eq:main-upper-numbered}
 C_m^{\sym}\le C_0\sqrt{m!}.
\end{equation}
Moreover, as $m\to\infty$,
\begin{equation}\label{eq:main-lower-numbered}
 C_m^{\sym}\ge
 \left(\sqrt{\frac2e}+o(1)\right)\frac{\sqrt{m!}}m.
\end{equation}
Consequently,
\begin{equation}\label{eq:main-log-numbered}
 \log C_m^{\sym}=\frac12\log(m!)+O(\log m),
 \qquad
 \bigl(C_m^{\sym}\bigr)^{1/m}\sim\sqrt{\frac me}.
\end{equation}
\end{maintheorem}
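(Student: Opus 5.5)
The lower bound \eqref{eq:main-lower-numbered} and the upper bound \eqref{eq:main-upper-numbered} are independent, and \eqref{eq:main-log-numbered} follows at once from them via Stirling's formula, since $\log m=O(\log m)$ and $(\sqrt{m!})^{1/m}\sim\sqrt{m/e}$. The plan therefore has two parts.

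\emph{Lower bound.} Fix a symmetric sign form $A$ and restrict to the diagonal: $P(x)=A(x,\ldots,x)$ for $x\in\{-1,1\}^n$. By polarization, $\|A\|\ge\sup_{x}|P(x)|$, and a crude loss would be $m^m/m!$. To do better we work directly with $P$ on the cube. Reduce monomials with $x_i^2=1$ and isolate the square-free part, i.e.\ the coefficients of $x_S$ with $|S|=m$ and distinct indices. Each such coefficient equals $m!\,\varepsilon_S$ with $|\varepsilon_S|=1$, so Parseval gives
\[
\mathbb E_x|P(x)|^2\ \ge\ \sum_{|S|=m}(m!)^2=(m!)^2\binom nm.
\]
Hence
\[
\sup_x|P|\ \ge\ m!\binom nm^{1/2}\ \sim\ \sqrt{m!}\,n^{m/2}\qquad\text{as }n\to\infty.
\]
This gives only $n^{m/2}$; we still need the missing factor $\sqrt n$. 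For that, test $A$ at $m-1$ copies of $x$ together with a free vector $y$:
\[
\|A\|\ \ge\ \sup_x\sum_i|A(x,\ldots,x,e_i)| .
\]
The vector $(A(x,\ldots,x,e_i))_i$ has as Walsh expansion the gradient-type coefficients $(m-1)!\,\varepsilon_{S\cup\{i\}}$. Averaging the $\ell_1$ norm through Cauchy--Schwarz, or through the fourth moment (Khintchine-type hypercontractivity at degree $m-1$, which loses a factor $e^{-O(1)}$ per degree — the source of the $\sqrt{2/e}$ and the $1/m$), yields
\[
\|A\|\ \gtrsim\ n\cdot(m-1)!\binom{n}{m-1}^{1/2}.
\]
Then optimize: let $n\to\infty$ and track the constants with Stirling to obtain $(\sqrt{2/e}+o(1))\sqrt{m!}/m$. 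The delicate point is getting exactly $\sqrt{2/e}$. I would try a Paley--Zygmund or $\ell_1$--$\ell_2$ comparison of the degree-$(m-1)$ Walsh chaos using Bonami's $L^4$ bound, as in the paper's ``square-free Walsh spectrum of the diagonal polynomial''.

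\emph{Upper bound.} Choose independent random signs on orbits, as in the elementary argument, but replace the union bound over $2^{mn}$ configurations by a chaining/Gaussian-width bound. The form is a Rademacher process $X_{(x^{(1)},\ldots,x^{(m)})}=\sum_{\mathcal O}\xi_{\mathcal O}\,a_{\mathcal O}(x)$ with $a_{\mathcal O}(x)=\sum_{i\in\mathcal O}\prod_j x^{(j)}_{i_j}$. Its canonical metric squared is $\sum_{\mathcal O}|a_{\mathcal O}(x)-a_{\mathcal O}(x')|^2$, and $\sum_{\mathcal O}a_{\mathcal O}(x)^2$ is a sum of Gram permanents $\per(\langle x^{(j)},x^{(\sigma(k))}\rangle)$-type expressions, bounded by $m!\,n^m$. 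The target is
\[
\mathbb E\sup X\ \lesssim\ \sqrt{m!}\,n^{(m+1)/2}
\]
by Talagrand's majorizing measure (or Dudley) applied through the comparison $\gamma_2\le$ Gaussian width. The key step, which I expect to be the main obstacle, is a rigidity statement: the permanent $\sum_\sigma\prod_j\langle x^{(j)},x^{(\sigma j)}\rangle/n^m$ is close to its maximum $m!$ only when the vectors are nearly aligned up to sign. Away from that regime the variance is at most $n^m\cdot O(1)$, so an $m!$-free union bound costs only $\sqrt{mn\cdot n^m}\le\sqrt{m!}\,n^{(m+1)/2}$ for $m\ge3$. Near the aligned set the configurations are essentially parametrized by one cube vector plus small perturbations, whose Gaussian width is $O(\sqrt n)$ times the scale $\sqrt{m!\,n^m}$. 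A peeling argument over levels of the permanent combines the two regimes into an absolute constant $C_0$.
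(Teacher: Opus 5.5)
The Stirling deduction of the logarithmic asymptotics is fine. The lower-bound argument, however, misreads what has to be proved, and your proposed repair cannot produce the stated constant. Since $C_m^{\sym}=\sup_{n}S^{\sym}_{m,n}/n^{(m+1)/2}$, a single dimension suffices. The diagonal Parseval bound you already derived (it is exactly the paper's) gives, for every $n\ge m$, $C_m^{\sym}\ge m!\sqrt{\binom nm}\,n^{-(m+1)/2}=\sqrt{m!/n}\,\prod_{k<m}(1-k/n)^{1/2}$. Letting $n\to\infty$ is the wrong move: once $n\asymp m^2$, the ``missing'' factor $n^{-1/2}$ \emph{is} the factor $1/m$ of the target. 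With $n\sim cm^2$ the product tends to $e^{-1/(2c)}$, so the bound is $(c^{-1/2}e^{-1/(4c)}+o(1))\sqrt{m!}/m$, and maximizing at $c=1/2$ gives $\sqrt{2/e}$. This optimization in the dimension is the only source of the constant. Your alternative via $\sup_x\sum_i|A(x,\ldots,x,e_i)|$ needs an $L^1$--$L^2$ comparison for degree-$(m-1)$ Walsh polynomials valid for every symmetric sign pattern. General hypercontractive comparisons lose a factor exponential in the degree (Bonami's $L^4$ bound gives $\|f\|_1\ge 3^{-(m-1)}\|f\|_2$), not an $e^{-O(1)}$ that somehow turns into $\sqrt{2/e}/m$. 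What you actually obtain is $C_m^{\sym}\ge 3^{-(m-1)}\sqrt{(m-1)!}$, which is too weak even for the $O(\log m)$ error term in the logarithmic asymptotics. Conversely, your intermediate bound $\|A\|\gtrsim n\,(m-1)!\sqrt{\binom n{m-1}}$ with an absolute constant would give $C_m^{\sym}\gtrsim\sqrt{m!/m}$, stronger than the stated lower bound, and nothing in the sketch establishes it.

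For the upper bound your outline has the paper's architecture: independent orbit signs, variance at most $m!\,n^m\alpha(x)$ with $\alpha=\per R/m!$, permanent rigidity, peeling, and Gaussian width near alignment. But of the two quantitative claims that carry it, one is false and the other is the whole difficulty, left unproved.

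First, there is no dichotomy between ``nearly aligned'' and ``variance $O(n^m)$''. The ratio $\alpha$ takes all values in $(0,1]$, and the bound $m!\,n^m e^{-L}$ (with $L=-\log\alpha$) is $O(n^m)$ only when $L\gtrsim\log m!$, where rigidity is vacuous. The intermediate levels must be peeled with an entropy count. The Carlen--Lieb--Loss inequality yields the Hamming localization $\sum_j\distH(x^{(j)},\pm x^{(r)})\le nL$. Hence, for $n\ge m/e$, the $k$th level satisfies $\log|\mathcal X_k|\lesssim n\bigl(1+(k+1)\log\frac{em}{k+1}\bigr)$, and the union bound there costs $e^{-k/2}\sqrt{1+(k+1)\log\frac{em}{k+1}}\,\sqrt{m!}\,n^{(m+1)/2}$. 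Its coefficient is bounded only for $k\gtrsim\log\log m$, so peeling alone gives $C_m^{\sym}\lesssim\sqrt{\log m}\,\sqrt{m!}$.

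Second, everything therefore rests on the region $L\le L_0\approx 2\log\log m$, where you merely assert width $O(\sqrt n)$. Parametrizing by ``one cube vector plus small perturbations'' and counting reproduces the $\sqrt{\log m}$ loss, so you need a mechanism that exploits correlations among the symmetrized tensors. The paper's mechanism has five parts:
\begin{enumerate}
\item It transfers the orbit process to $\sqrt{m!}\,\Psi(x)$ via $\|c(x)-c(y)\|_2\le\sqrt{m!}\,\|\Psi(x)-\Psi(y)\|_2$ and Sudakov--Fernique.
\item It recenters the close vectors at their barycenter, so the linear transverse terms cancel exactly.
\item It bounds every $e_\ell(v)$ in projective norm through Newton's identities, with $\prod_r a_r$ absorbing the quadratic power sum.
\item It pairs this with $\mathbb E\|G\|_\varepsilon\lesssim\sqrt{n\log(e\ell)}$ and the level normalization $\binom mh^{-1/2}$.
\item It absorbs the $O(S^3)$ far vectors by Vandermonde's identity, and the $\exp\{Cn\log(e+k)\}$ barycentric directions by the radius $e^{-k/2}$.
\end{enumerate}
None of this appears in your sketch, and it is precisely what removes the remaining $\sqrt{\log m}$.
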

\endgroup

The two estimates determine the factorial growth of $C_m^{\sym}$ within a multiplicative factor $O(m)$.  The logarithmic asymptotics in \eqref{eq:main-log-numbered} also follow by combining the lower bound with $C_m^{\sym}\le\sqrt{2m\log2}\,\sqrt{m!}$; the new upper bound has a prefactor independent of the degree.  In particular, it remains open whether
$C_m^{\sym}/\sqrt{m!}$ is bounded below by a positive absolute constant.
\begingroup
\color{black}
Section~\ref{sec:block-restrictions} shows that square-free Walsh restrictions, even after identifying the variables in arbitrary blocks, cannot remove the factor $m^{-1}$ obtained from the full diagonal.
\endgroup

For the unrestricted problem, let $\mathcal H$ denote the set of Hadamard orders, including $1$ and $2$, and define
\[
 h_+(n):=\min\{N\in\mathcal H:N\ge n\}.
\]
\begingroup
\color{black}
There is an absolute constant $K_{\mathrm H}>0$ such that
\endgroup
\begin{equation}\label{eq:hadamard-gap-intro}
 h_+(n)-n\le K_{\mathrm H}n^{1/6}
 \qquad(n\ge1).
\end{equation}

\begingroup
\renewcommand{\themaintheorem}{B}
\begin{maintheorem}[The unrestricted comparison]
\label{thm:unrestricted-summary}
For every $m\ge2$ and $1\le n_1\le\cdots\le n_m$,
\begin{equation}\label{eq:unrestricted-rectangular}
 \frac{S_{m,\mathbf n}}
 {n_m^{1/2}\prod_{j=1}^m n_j^{1/2}}
 \le
 \prod_{j=2}^m
 \left(\frac{h_+(n_j)}{n_j}\right)^{1/2}
 \le
 \exp\left\{\frac{K_{\mathrm H}}2
 \sum_{j=2}^m n_j^{-5/6}\right\}.
\end{equation}
In particular,
\begin{equation}\label{eq:two-parameter-main}
 \frac{S_{m,n}}{n^{(m+1)/2}}
 \le
 \exp\left\{\frac{K_{\mathrm H}}2(m-1)n^{-5/6}\right\}.
\end{equation}
Consequently, the normalized unrestricted quantity is at most $1+o(1)$
whenever $m=o(n^{5/6})$.
\end{maintheorem}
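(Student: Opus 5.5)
The plan is to reduce to the case in which every $n_j$ with $j\ge2$ is a Hadamard order, and then build an explicit sign form from Hadamard matrices whose norm can be computed exactly. The reduction is monotone. If $n_j\le N_j$ for every $j$, take a sign form on $\ell_\infty^{N_1}\times\cdots\times\ell_\infty^{N_m}$ and restrict it by setting the surplus coordinates to zero. The coefficients are still signs, and the norm can only decrease, so $S_{m,\mathbf n}\le S_{m,\mathbf N}$. Apply this with $N_1=n_1$ and $N_j=h_+(n_j)$ for $j\ge2$. The normalization changes as follows. The factor $n_m^{1/2}$ in the denominator becomes $h_+(n_m)^{1/2}$, and each $n_j^{1/2}$ with $j\ge2$ becomes $h_+(n_j)^{1/2}$. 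Hence it suffices to prove
\[
 S_{m,\mathbf N}\le N_m^{1/2}\prod_j N_j^{1/2}
\]
when $N_2,\dots,N_m$ are Hadamard orders. Since $N_m$ itself may be replaced, the factor $n_m$ counted twice must be tracked. The target only allows one copy of $(h_+(n_m)/n_m)^{1/2}$. I would therefore keep $N_m^{1/2}$ paired against the largest dimension only once. This works if the base construction produces the bound $\bigl(\prod_{j\ge2}N_j\bigr)^{1/2}\cdot\sqrt{N_1}\cdot\sqrt{N_m}$ with the $\sqrt{N_m}$ coming from a dimension count, not from a Hadamard factor. I would check this bookkeeping first.

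For the base construction I would use a tensor/recursive scheme in which the last variable pairs with a Hadamard matrix. Let $H$ be an $N_m\times N_m$ Hadamard matrix, so $\|H\|_{\ell_2\to\ell_2}=\sqrt{N_m}$. Index the first $m-1$ variables jointly by the multi-index $(i_1,\dots,i_{m-1})$, and map it injectively into the row set. The map is $(i_1,\dots,i_{m-1})\mapsto\phi(i_1,\dots,i_{m-1})\in[N_m]$; when $\prod_{j<m}N_j$ exceeds $N_m$, I would instead use a product of Hadamard matrices $H_2\otimes\cdots\otimes H_m$, whose order is $\prod_{j\ge2}N_j$. The cleaner route is an induction on $m$. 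Write $A(x^{(1)},\dots,x^{(m)})=\sum_{i_m}x^{(m)}_{i_m}B_{i_m}(x^{(1)},\dots,x^{(m-1)})$, where the $B_{i}$ are $(m-1)$-linear sign forms built by twisting a fixed good form with rows of $H_m$. Then bound
\[
 |A|\le\|x^{(m)}\|_2\Bigl(\sum_i B_i^2\Bigr)^{1/2}\le\sqrt{N_m}\,\Bigl(\sum_iB_i^2\Bigr)^{1/2},
\]
and use orthogonality of the Hadamard rows so that $\sum_i B_i^2$ collapses to $N_m$ times a sum of squares of a lower-order object. Iterating gives $\prod_{j\ge2}N_j$ inside the square root at the level of $\ell_2$ norms. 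At the bottom, Cauchy--Schwarz on $x^{(1)}\in[-1,1]^{N_1}$ produces $\sqrt{N_1}\cdot\sqrt{N_1}$. That is too much when $N_1$ is small. The right base is the bilinear estimate $\|H\|_{\ell_\infty\times\ell_\infty}\le N^{3/2}$ applied to the pair of largest dimensions. This is where I expect the main obstacle: arranging the sign pattern so that exactly one Hadamard norm $\sqrt{N}$ appears with the largest dimension, while all other variables contribute only $\sqrt{N_j}$ through $\|x\|_2\le\sqrt{N_j}\|x\|_\infty$. I would try $\varepsilon_{i_1\dots i_m}=\prod_{j=2}^m H^{(j)}_{i_{j-1}',i_j}$-type chaining, with indices truncated by zero-padding. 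I would verify the resulting bound $N_m^{1/2}\prod_jN_j^{1/2}$ by successive Cauchy--Schwarz steps, using $\|H^{(j)}\|_{2\to2}=\sqrt{N_j}$.

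The second inequality in \eqref{eq:unrestricted-rectangular} follows from \eqref{eq:hadamard-gap-intro}. We have $h_+(n_j)/n_j\le1+K_{\mathrm H}n_j^{-5/6}\le\exp(K_{\mathrm H}n_j^{-5/6})$; take square roots and multiply. Setting all $n_j=n$ gives \eqref{eq:two-parameter-main}. If $m=o(n^{5/6})$, the exponent tends to $0$, which yields the final claim.
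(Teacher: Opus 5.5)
Your derivation of the second inequality in \eqref{eq:unrestricted-rectangular}, of \eqref{eq:two-parameter-main}, and of the $1+o(1)$ consequence is correct and is the paper's. The gap is in the first inequality, which is the substance of the theorem. You never estimate a concrete sign form, and the route you outline does not give the stated bound, for two reasons.

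First, the reduction is lossy. Passing to $S_{m,\mathbf N}$ with $N_m=h_+(n_m)$ and then proving $S_{m,\mathbf N}\le N_m^{1/2}\prod_jN_j^{1/2}$ yields $\prod_{j=2}^m(h_+(n_j)/n_j)^{1/2}$ times a second copy of $(h_+(n_m)/n_m)^{1/2}$. So that bound is not sufficient. You notice the problem but leave it open.

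Second, the accounting you settle on does not work for your construction. One Hadamard factor $\sqrt{N_m}$, with every variable entering through $\|x\|_2\le\sqrt{N_j}\|x\|_\infty$, would require a sign form of norm at most $\sqrt{N_m}$ on $\ell_2\times\cdots\times\ell_2$. The chain $\prod_kH^{(k)}_{i_{k-1},i_k}$ you finally suggest is far from this. In equal dimension $N$ with $m\ge3$, take $x^{(2)}=\cdots=x^{(m-1)}=e_1$, and take unit vectors $x^{(1)},x^{(m)}$ along the first column of $H^{(2)}$ and the first row of $H^{(m)}$. This gives the value $\pm N$, so estimating the chain that way loses at least a factor $\sqrt N$.

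Your inductive scheme was in fact on the right track. You abandoned it because of a miscount: after peeling off $x^{(m)}$ and the Hadamard blocks, the bottom contributes the single factor $\|x^{(1)}\|_2\le\sqrt{N_1}$, not $\sqrt{N_1}\cdot\sqrt{N_1}$.

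What is missing is the paper's bookkeeping, carried out directly in the original dimensions. Let $t_k=h_+(n_k)$, and let $B^{(k)}$ be the upper-left $n_{k-1}\times n_k$ block of a Hadamard matrix $H^{(k)}$ of order $t_k$. This is possible because $n_{k-1}\le n_k\le t_k$, and it gives $\|B^{(k)}\|_{2\to2}\le\sqrt{t_k}$. Take the coefficients $\prod_{k=2}^m h^{(k)}_{i_{k-1},i_k}$. Set $v^{(m)}=x^{(m)}$ and $v^{(k)}=x^{(k)}\odot B^{(k+1)}v^{(k+1)}$ for $2\le k\le m-1$, where $\odot$ is the coordinatewise product. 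Then:
\begin{itemize}
\item each middle variable acts as a diagonal multiplier of $\ell_2$-operator norm $\|x^{(k)}\|_\infty\le1$, so it costs nothing;
\item each block costs $\sqrt{t_k}$;
\item only the two ends pay a dimension count.
\end{itemize}
This gives
\[
 |A(x^{(1)},\ldots,x^{(m)})|
 =\bigl|\langle x^{(1)},B^{(2)}v^{(2)}\rangle\bigr|
 \le\sqrt{n_1}\,\Bigl(\prod_{k=2}^m\sqrt{t_k}\Bigr)\|x^{(m)}\|_2
 \le\sqrt{n_1n_m}\prod_{k=2}^m\sqrt{t_k}.
\]
Because $\|x^{(m)}\|_2\le\sqrt{n_m}$ uses the original dimension, dividing by $n_m^{1/2}\prod_jn_j^{1/2}=\sqrt{n_1n_m}\prod_{k=2}^m\sqrt{n_k}$ gives exactly $\prod_{k=2}^m(h_+(n_k)/n_k)^{1/2}$. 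There is no doubled factor, and the reduction to $S_{m,\mathbf N}$ is not needed.
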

\endgroup

If $n$ is a Hadamard order, the first estimate in \eqref{eq:unrestricted-rectangular} gives
$S_{m,n}\le n^{(m+1)/2}$.  Combining this with the lower estimate for the symmetric problem at $n\asymp m^2$ gives the following comparison.

\begin{corollary}[The price of symmetry at quadratic dimension]
\label{cor:price-symmetry}
For $m\ge2$, set
\[
 N_m:=h_+\!\left(\left\lfloor\frac{m^2}{2}\right\rfloor\right).
\]
Then, as $m\to\infty$, $N_m=m^2/2+O(m^{1/3})$ and
\begin{equation}\label{eq:quadratic-unrestricted-exact}
 \frac{S_{m,N_m}}{N_m^{(m+1)/2}}\le1.
\end{equation}
On the other hand,
\begin{equation}\label{eq:quadratic-symmetric-lower}
 \frac{S_{m,N_m}^{\sym}}{N_m^{(m+1)/2}}
 \ge
 \left(\sqrt{\frac2e}+o(1)\right)\frac{\sqrt{m!}}m.
\end{equation}
Consequently,
\[
 \frac{S_{m,N_m}^{\sym}}{S_{m,N_m}}
 \ge
 \left(\sqrt{\frac2e}+o(1)\right)\frac{\sqrt{m!}}m.
\]
\end{corollary}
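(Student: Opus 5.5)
The corollary combines three ingredients stated earlier: the Hadamard gap bound \eqref{eq:hadamard-gap-intro}, the first inequality in \eqref{eq:unrestricted-rectangular} of Theorem~\ref{thm:unrestricted-summary}, and the lower bound \eqref{eq:main-lower-numbered} of Theorem~A. The one point needing care is that the lower bound must hold at the specific dimension $N_m$, not merely in the supremum defining $C_m^{\sym}$.

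\textbf{Size of $N_m$.} Put $n=\lfloor m^2/2\rfloor$. By \eqref{eq:hadamard-gap-intro},
\[
 0\le N_m-n\le K_{\mathrm H}n^{1/6}=O(m^{1/3}).
\]
Since $n=m^2/2+O(1)$, this gives $N_m=m^2/2+O(m^{1/3})$.

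\textbf{The unrestricted bound \eqref{eq:quadratic-unrestricted-exact}.} Because $N_m$ is a Hadamard order, $h_+(N_m)=N_m$. Apply the first inequality of \eqref{eq:unrestricted-rectangular} with $n_1=\cdots=n_m=N_m$. Every factor $h_+(n_j)/n_j$ equals $1$, so $S_{m,N_m}\le N_m^{(m+1)/2}$.

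\textbf{The symmetric bound \eqref{eq:quadratic-symmetric-lower}.} The definition of $C_m^{\sym}$ as a supremum over $n$ cannot be used directly. Knowing $C_m^{\sym}$ is large says nothing about $S^{\sym}_{m,n}$ at a particular $n$. I therefore need the argument behind \eqref{eq:main-lower-numbered}, which I expect is a dimension-dependent lower bound. The plan is as follows.

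\begin{enumerate}
\item Test a symmetric sign form $A$ on the diagonal polynomial $P(x)=A(x,\dots,x)$ over $\{-1,1\}^n$.
\item Extract its square-free Walsh coefficients. For each $m$-subset $S$, the coefficient is $m!\,\varepsilon_S$, so by Parseval
\[
 \|A\|^2\ge\mathbb E|P|^2\ge (m!)^2\binom nm .
\]
\item Note that $(m!)^2\binom nm\approx m!\,n^m e^{-m^2/(2n)}$ when $n\asymp m^2$.
\item Normalize by $n^{m+1}$. This gives
\[
 \frac{(S^{\sym}_{m,n})^2}{n^{m+1}}\gtrsim \frac{m!}{n}\,e^{-m^2/(2n)} .
\]
\item Optimize over $n$. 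The choice $n=m^2/2$ gives $\frac{2m!}{m^2}e^{-1}$, which is precisely $\bigl(\sqrt{2/e}\,\sqrt{m!}/m\bigr)^2$.
\end{enumerate}

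So the proof of \eqref{eq:main-lower-numbered} is presumably evaluated at $n\sim m^2/2$. I will invoke it as a pointwise bound, valid for any $n=m^2/2+o(m^2)$, and in particular for $n=N_m$. Let $n=N_m$ and write $\binom nm=\frac{n^m}{m!}\prod_{k<m}(1-k/n)$. Then
\[
 \log\prod_{k<m}\Bigl(1-\frac kn\Bigr)=-\frac{m^2}{2n}+O\Bigl(\frac mn+\frac{m^3}{n^2}\Bigr)=-1+o(1),
\]
and also $n/(m^2/2)\to1$. These two facts give \eqref{eq:quadratic-symmetric-lower}.

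\textbf{The ratio.} Dividing \eqref{eq:quadratic-symmetric-lower} by \eqref{eq:quadratic-unrestricted-exact} yields the final comparison $S^{\sym}_{m,N_m}/S_{m,N_m}\ge(\sqrt{2/e}+o(1))\sqrt{m!}/m$.

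The main obstacle is this dimension-specific step. I must confirm that the Walsh lower bound holds at every $n$ and is continuous in $n/m^2$, so that replacing $m^2/2$ by $N_m$ costs only $o(1)$. That robustness follows from the $O(m^{1/3})$ perturbation together with the expansion above.
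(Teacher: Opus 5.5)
Your proposal is correct and follows essentially the same route as the paper. You use the Hadamard gap bound to get $N_m=m^2/2+O(m^{1/3})$. You use the truncated-Hadamard construction to get $S_{m,N_m}\le N_m^{(m+1)/2}$; you reach it through the first inequality of Theorem~B with every $h_+(N_m)/N_m=1$, whereas the paper cites Lemma~\ref{lem:block-ksz} directly, which is the same argument. You then evaluate the pointwise Parseval bound $S^{\sym}_{m,n}\ge m!\sqrt{\binom nm}$ of Proposition~\ref{prop:parseval} at $n=N_m$, with $\sum_{k<m}\log(1-k/N_m)=-1+o(1)$. You were also right that the supremum-defined constant $C_m^{\sym}$ cannot be used here, and that the dimension-specific Walsh bound is what the corollary needs.
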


For the lower bound we restrict a symmetric form to the diagonal and use Parseval on the discrete cube.  The square-free Walsh coefficients then have modulus $m!$.

For the upper bound, fix cube vectors $x^{(1)},\ldots,x^{(m)}$ and let $R$ be the Gram matrix of their normalizations.  The $L^2$ norm, with respect to the orbit signs, is controlled by $\per R$, while
$\per R/m!$ is the squared Hilbert norm of the symmetrized tensor product.  A quantitative rigidity estimate shows that a large permanent forces the normalized vectors, after sign changes, to be close to one another.  The variables that are sufficiently close are recentered at the direction of their sum.  The resulting transverse components have vanishing sum before normalization, and Newton identities control the higher symmetric products.  The remaining estimates are obtained from Gaussian widths.

The proof of Theorem~\ref{thm:unrestricted-summary} is separate and follows from the rectangular Hadamard construction in Lemma~\ref{lem:block-ksz}.

\begingroup
\color{black}
\section{Proof of the lower bound in Theorem~A}\label{sec:walsh-lower}

The lower bound is deterministic.  Symmetry becomes rigid on the diagonal,
where every permutation orbit contributes coherently to a single Walsh
character.  Parseval then converts orbit multiplicity into norm.  This
mechanism already identifies the factorial scale and, after optimization in
the dimension, singles out $n\asymp m^2$.

Let $A_{m,n}:(\ell_\infty^n)^m\to\R$ be a real symmetric unimodular
$m$-linear form.  Define its diagonal polynomial
\[
 P_A:\R^n\longrightarrow\R,
 \qquad
 P_A(x):=A_{m,n}(x,\ldots,x).
\]
For each $E\subset\{1,\ldots,n\}$, define the Walsh character
\[
 \chi_E:\{-1,1\}^n\longrightarrow\{-1,1\},
 \qquad
 \chi_E(x):=\prod_{j\in E}x_j,
 \qquad
 \widehat P_A(E)
 :=
 2^{-n}\sum_{x\in\{-1,1\}^n}P_A(x)\chi_E(x).
\]

\begin{lemma}\label{lem:squarefree}
If $E=\{j_1,\ldots,j_m\}\subset\{1,\ldots,n\}$ has cardinality $m$, then
\begin{equation}\label{eq:squarefree-walsh}
 \widehat P_A(E)=m!\,\varepsilon_{j_1,\ldots,j_m}.
\end{equation}
In particular, $|\widehat P_A(E)|=m!$.
\end{lemma}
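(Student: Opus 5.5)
We expand the diagonal polynomial directly and compute its Walsh coefficient by orthogonality of characters. Write
\[
 P_A(x)=\sum_{i_1,\ldots,i_m=1}^n \varepsilon_{i_1,\ldots,i_m}\,x_{i_1}\cdots x_{i_m}.
\]
On the cube $\{-1,1\}^n$ we have $x_j^2=1$. Hence each monomial $x_{i_1}\cdots x_{i_m}$ reduces to the character $\chi_{F(i)}$. Here $F(i)\subset\{1,\ldots,n\}$ is the set of indices $j$ that occur an odd number of times in the multi-index $i=(i_1,\ldots,i_m)$. Thus $P_A=\sum_i \varepsilon_i\chi_{F(i)}$ as functions on the cube.

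The characters are orthonormal: $2^{-n}\sum_x\chi_F(x)\chi_E(x)=\chi_{F\triangle E}$ averaged, which equals $1$ if $F=E$ and $0$ otherwise. Therefore
\[
 \widehat P_A(E)=\sum_{i:\,F(i)=E}\varepsilon_i .
\]

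The key combinatorial step is to identify the multi-indices with $F(i)=E$ when $|E|=m$. Every $j\in E$ must appear an odd number of times, hence at least once. There are $m$ such $j$ and only $m$ slots, so each $j\in E$ appears exactly once and nothing else appears. Consequently, $F(i)=E$ holds exactly when $(i_1,\ldots,i_m)$ is a permutation $(j_{\sigma(1)},\ldots,j_{\sigma(m)})$ of $(j_1,\ldots,j_m)$ with $\sigma\in S_m$. Since the $j_k$ are distinct, these are $m!$ distinct multi-indices.

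Permutation symmetry gives $\varepsilon_{j_{\sigma(1)},\ldots,j_{\sigma(m)}}=\varepsilon_{j_1,\ldots,j_m}$ for every $\sigma$, so the sum equals $m!\,\varepsilon_{j_1,\ldots,j_m}$. Unimodularity, $\varepsilon\in\{-1,1\}$, then yields $|\widehat P_A(E)|=m!$.

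No step is a real obstacle. The only point needing care is the counting argument: it shows that degree $m$ together with $|E|=m$ rules out any multi-index containing a repeated coordinate or a coordinate outside $E$.
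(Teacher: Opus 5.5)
Your proposal is correct and follows essentially the same route as the paper. You expand $P_A$, reduce each monomial to $\chi_{F(i)}$ via orthogonality, and use the counting argument to show that degree $m$ with $|E|=m$ forces each element of $E$ to occur exactly once. The $m!$ orderings then carry equal coefficients by symmetry.
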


\begin{proof}
Expand
\[
 P_A(x)
 =
 \sum_{i_1,\ldots,i_m=1}^n
 \varepsilon_{i_1,\ldots,i_m}x_{i_1}\cdots x_{i_m}.
\]
By orthogonality of the Walsh characters, a monomial has nonzero inner
product with $\chi_E$ precisely when the indices occurring an odd number of
times form $E$.  Since the total degree and $|E|$ are both $m$, every element
of $E$ must occur exactly once and no other index can occur.  There are $m!$
possible orderings.  Symmetry makes their coefficients equal, proving
\eqref{eq:squarefree-walsh}.
\end{proof}

\begin{proposition}\label{prop:parseval}
For every $n\ge m$,
\begin{equation}\label{eq:parsevallower}
 S_{m,n}^{\sym}\ge m!\sqrt{\binom nm}.
\end{equation}
Consequently,
\begin{equation}\label{eq:loweralln}
 C_m^{\sym}
 \ge
 \sup_{n\ge m}
 \frac{m!\sqrt{\binom nm}}{n^{(m+1)/2}}.
\end{equation}
\end{proposition}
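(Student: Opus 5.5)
The plan is to bound $\|A\|$ from below by the supremum of the diagonal polynomial $P_A$ over the discrete cube, and then to bound that supremum from below through the $L^2$ norm computed by Parseval.

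Fix a real symmetric unimodular form $A=A_{m,n}$ with $n\ge m$. Every $x\in\{-1,1\}^n$ satisfies $\|x\|_\infty\le1$, so
\[
 \|A\|\ge\max_{x\in\{-1,1\}^n}|P_A(x)|\ge\Bigl(2^{-n}\sum_{x\in\{-1,1\}^n}P_A(x)^2\Bigr)^{1/2}.
\]
The second inequality says that the maximum dominates the quadratic mean with respect to the uniform probability measure on the cube.

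Next we apply Parseval for the Walsh system. The characters $\{\chi_E\}_{E\subset\{1,\ldots,n\}}$ form an orthonormal basis of $L^2(\{-1,1\}^n,2^{-n}\,\mathrm{counting})$, so
\[
 2^{-n}\sum_x P_A(x)^2=\sum_{E}\widehat P_A(E)^2\ge\sum_{|E|=m}\widehat P_A(E)^2.
\]
Here we discard every set $E$ with $|E|\neq m$, which is legitimate because all the terms are nonnegative. By Lemma~\ref{lem:squarefree}, each of the $\binom nm$ sets with $|E|=m$ contributes $(m!)^2$. The right-hand side is therefore $(m!)^2\binom nm$, and hence $\|A\|\ge m!\sqrt{\binom nm}$.

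This lower bound holds for every admissible $A$, so taking the minimum over $A$ gives \eqref{eq:parsevallower}. For \eqref{eq:loweralln}, the definition of $C_m^{\sym}$ as a supremum over $n$ gives $C_m^{\sym}\ge S_{m,n}^{\sym}/n^{(m+1)/2}$ for each $n$. Inserting \eqref{eq:parsevallower} and taking the supremum over $n\ge m$ finishes the argument.

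There is no real obstacle here. The only step needing care is that the inequality $\|A\|\ge\max_{\text{cube}}|P_A|$ uses the diagonal inputs $x^{(1)}=\cdots=x^{(m)}=x$, and these are admissible in the definition of the norm.
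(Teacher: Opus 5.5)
Your proposal is correct and follows essentially the same route as the paper's proof: restrict to the diagonal on cube vertices, dominate the quadratic mean by the maximum, apply Parseval while keeping only the square-free level $|E|=m$ (where Lemma~\ref{lem:squarefree} gives $|\widehat P_A(E)|=m!$), and then minimize over forms and take the supremum in $n$. The extra remarks you add (nonnegativity of the discarded terms, admissibility of the diagonal inputs) are exactly the small points the paper's argument uses implicitly.
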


\begin{proof}
Parseval and Lemma~\ref{lem:squarefree} give
\begin{align*}
 2^{-n}\sum_{x\in\{-1,1\}^n}|P_A(x)|^2
 &=
 \sum_{E\subset\{1,\ldots,n\}}
 |\widehat P_A(E)|^2\\
 &\ge
 \sum_{|E|=m}|\widehat P_A(E)|^2
 =
 (m!)^2\binom nm.
\end{align*}
Hence
\[
 \|A_{m,n}\|
 \ge
 \max_{x\in\{-1,1\}^n}|P_A(x)|
 \ge
 m!\sqrt{\binom nm}.
\]
Minimizing over symmetric unimodular forms proves
\eqref{eq:parsevallower}; division by $n^{(m+1)/2}$ and the supremum over
$n$ give \eqref{eq:loweralln}.
\end{proof}

\begin{lemma}\label{lem:walsh-product-expansion}
Let $c>0$ be fixed and let $n=n_m$ be integers satisfying
$n_m\sim cm^2$.  Then
\begin{equation}\label{eq:walsh-product-expansion}
 \prod_{k=0}^{m-1}\left(1-\frac{k}{n_m}\right)
 =\exp\left\{-\frac1{2c}+o(1)\right\}.
\end{equation}
\end{lemma}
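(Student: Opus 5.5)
We take logarithms and prove
\[
 \sum_{k=0}^{m-1}\log\Bigl(1-\frac{k}{n_m}\Bigr)\longrightarrow-\frac1{2c}.
\]
First, since $n_m\sim cm^2$, we have $n_m>m$ for all large $m$. Hence every factor is positive, and $k/n_m\le (m-1)/n_m=O(1/m)\to0$ uniformly in $0\le k\le m-1$.

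For $0\le t\le\frac12$ we use the elementary bound
\[
 \bigl|\log(1-t)+t\bigr|\le t^2 .
\]
It follows from the series $-\log(1-t)-t=\sum_{j\ge2}t^j/j\le t^2/(2(1-t))\le t^2$. Applying it termwise splits the sum into a main term and an error term:
\[
 \sum_{k=0}^{m-1}\log\Bigl(1-\frac{k}{n_m}\Bigr)
 =
 -\frac{1}{n_m}\sum_{k=0}^{m-1}k+O\Bigl(\frac{1}{n_m^2}\sum_{k=0}^{m-1}k^2\Bigr).
\]

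The main term equals
\[
 -\frac{m(m-1)}{2n_m}=-\frac{m^2(1+o(1))}{2cm^2(1+o(1))}=-\frac1{2c}+o(1).
\]
The error term is $O(m^3/n_m^2)=O(m^3/m^4)=O(1/m)=o(1)$. Exponentiating gives \eqref{eq:walsh-product-expansion}.

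The only point needing care is that the $o(1)$ in $n_m\sim cm^2$ must be tracked through the ratio $m(m-1)/n_m$. This is immediate because $c>0$ is fixed, so there is no genuine obstacle.
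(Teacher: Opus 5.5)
Your proposal is correct and follows essentially the same route as the paper: take logarithms, use $\log(1-t)=-t+O(t^2)$ on $[0,1/2]$, identify the main term $-m(m-1)/(2n_m)\to-1/(2c)$, and bound the error by $O(m^3/n_m^2)=O(1/m)$. Your explicit remainder constant $|\log(1-t)+t|\le t^2$ is a harmless sharpening of the paper's unspecified absolute constant.
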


\begin{proof}
Since $n_m\sim cm^2$, there is $m_1$ such that, for $m\ge m_1$,
\[
 0\le \frac{k}{n_m}\le \frac{m-1}{n_m}\le \frac12
 \qquad(0\le k\le m-1).
\]
For $0\le t\le1/2$,
\[
 \log(1-t)=-t+R(t),
 \qquad |R(t)|\le C t^2
\]
with an absolute constant $C$.  Hence
\begin{align*}
 \sum_{k=0}^{m-1}\log\left(1-\frac{k}{n_m}\right)
 &=-\frac1{n_m}\sum_{k=0}^{m-1}k
   +O\left(\frac1{n_m^2}\sum_{k=0}^{m-1}k^2\right)\\
 &=-\frac{m(m-1)}{2n_m}
   +O\left(\frac{m(m-1)(2m-1)}{6n_m^2}\right).
\end{align*}
Now
\[
 \frac{m(m-1)}{2n_m}=\frac1{2c}+o(1)
\]
and, because $n_m\asymp m^2$,
\[
 \frac{m(m-1)(2m-1)}{6n_m^2}=O(m^{-1})=o(1).
\]
Therefore
\[
 \sum_{k=0}^{m-1}\log\left(1-\frac{k}{n_m}\right)
 =-\frac1{2c}+o(1),
\]
and exponentiation gives \eqref{eq:walsh-product-expansion}.
\end{proof}

\begin{lemma}\label{lem:walsh-c-optimization}
The function
\[
 \varphi:(0,\infty)\longrightarrow(0,\infty),
 \qquad
 \varphi(c):=c^{-1/2}e^{-1/(4c)},
\]
has a unique maximum at $c=1/2$, and
\[
 \varphi(1/2)=\sqrt{\frac2e}.
\]
\end{lemma}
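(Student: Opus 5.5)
The plan is to study $\varphi$ through its logarithm, which is smooth on $(0,\infty)$ and turns the product into a sum. Set
\[
 \psi(c):=\log\varphi(c)=-\tfrac12\log c-\frac1{4c},
 \qquad c>0.
\]
Since $\log$ is strictly increasing, $\varphi$ and $\psi$ have the same maximizers. The problem therefore reduces to locating the critical points of $\psi$ and checking that the unique one is a global maximum.

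Differentiating gives
\[
 \psi'(c)=-\frac1{2c}+\frac1{4c^2}=\frac{1-2c}{4c^2}.
\]
The denominator is positive on $(0,\infty)$, so $\psi'$ has the sign of $1-2c$. It is strictly positive on $(0,1/2)$, vanishes only at $c=1/2$, and is strictly negative on $(1/2,\infty)$. Hence $\psi$ is strictly increasing on $(0,1/2]$ and strictly decreasing on $[1/2,\infty)$. This gives $\psi(c)<\psi(1/2)$ for every $c\neq1/2$, so $c=1/2$ is the unique global maximizer of $\psi$, and hence of $\varphi$. The strict monotonicity on each side is what delivers uniqueness, so no second-derivative test or analysis of the behavior as $c\to0^+$ or $c\to\infty$ is required.

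It remains to evaluate the maximum:
\[
 \varphi(1/2)=(1/2)^{-1/2}e^{-1/(4\cdot\frac12)}=\sqrt2\,e^{-1/2}=\sqrt{\frac2e}.
\]

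Every step here is routine. The only point that needs care is the sign analysis of $\psi'$, which is immediate once $\psi'$ is written over the common denominator $4c^2$.

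For context, this optimization is what the paper will use when it combines Lemma~\ref{lem:walsh-product-expansion} with \eqref{eq:loweralln}. Take $n\sim cm^2$ and use $\binom nm=\frac{n^m}{m!}\prod_{k<m}(1-k/n)$. Then
\[
 \frac{m!\sqrt{\binom nm}}{n^{(m+1)/2}}=\frac{\sqrt{m!}}{\sqrt n}\,e^{-1/(4c)+o(1)}=\frac{\sqrt{m!}}m\,\varphi(c)\,(1+o(1)),
\]
and choosing $c=1/2$ yields the constant $\sqrt{2/e}$ in \eqref{eq:main-lower-numbered}.
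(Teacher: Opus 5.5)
Your proposal is correct and follows the same route as the paper: differentiate $\log\varphi(c)=-\tfrac12\log c-\tfrac1{4c}$, observe that the derivative $\frac{1-2c}{4c^2}$ has the sign of $1-2c$, conclude that $c=1/2$ is the unique maximizer, and evaluate $\varphi(1/2)=\sqrt{2/e}$. Your contextual remark about how the lemma feeds the choice $n_m\sim m^2/2$ in Proposition~\ref{prop:lower} also matches the paper.
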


\begin{proof}
Since $\varphi(c)>0$, it is enough to differentiate its logarithm:
\[
 \log\varphi(c)=-\frac12\log c-\frac1{4c}.
\]
Thus
\[
 \frac{d}{dc}\log\varphi(c)
 =-\frac1{2c}+\frac1{4c^2}
 =\frac{1-2c}{4c^2}.
\]
This derivative is positive for $0<c<1/2$, vanishes at $c=1/2$, and is
negative for $c>1/2$.  Hence $c=1/2$ is the unique maximizer.  Finally,
\[
 \varphi(1/2)
 =(1/2)^{-1/2}e^{-1/2}
 =\sqrt2\,e^{-1/2}
 =\sqrt{\frac2e}.
\]
\end{proof}

\begin{proposition}\label{prop:lower}
As $m\to\infty$,
\begin{equation}\label{eq:lower}
 C_m^{\sym}
 \ge
 \left(\sqrt{\frac2e}+o(1)\right)\frac{\sqrt{m!}}m.
\end{equation}
\end{proposition}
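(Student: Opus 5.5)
We plan to apply \eqref{eq:loweralln} from Proposition~\ref{prop:parseval} with the particular choice $n=n_m:=\lfloor m^2/2\rfloor$, so that $n_m\sim cm^2$ with $c=1/2$, and then evaluate the resulting ratio asymptotically. Write
\[
 \frac{m!\sqrt{\binom{n}{m}}}{n^{(m+1)/2}}
 =\sqrt{m!}\,\Bigl(\frac{m!\binom nm}{n^{m}}\Bigr)^{1/2}\,n^{-1/2}
 =\sqrt{m!}\,\Bigl(\prod_{k=0}^{m-1}\Bigl(1-\frac kn\Bigr)\Bigr)^{1/2}\,n^{-1/2},
\]
using $m!\binom nm=n(n-1)\cdots(n-m+1)$. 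This identity is the key algebraic step: the factor $m!$ in front combines with the $1/m!$ inside the binomial coefficient, leaving exactly one $\sqrt{m!}$ and a falling-factorial product.

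Next we invoke Lemma~\ref{lem:walsh-product-expansion} with $n_m\sim cm^2$ to get $\prod_{k<m}(1-k/n_m)=\exp\{-1/(2c)+o(1)\}$, so its square root is $e^{-1/(4c)+o(1)}$. Since $n_m^{-1/2}=(c^{-1/2}+o(1))/m$, the ratio equals
\[
 \bigl(\varphi(c)+o(1)\bigr)\frac{\sqrt{m!}}{m},\qquad \varphi(c)=c^{-1/2}e^{-1/(4c)}.
\]
Lemma~\ref{lem:walsh-c-optimization} says $c=1/2$ maximizes $\varphi$, with $\varphi(1/2)=\sqrt{2/e}$. Taking $n=n_m$ in the supremum of \eqref{eq:loweralln} then gives \eqref{eq:lower}. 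We must also check $n_m\ge m$, which holds for $m\ge3$.

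The only delicate point is bookkeeping of the $o(1)$ terms. We have to confirm that replacing $m^2/2$ by its floor changes $n_m^{-1/2}$ only by a factor $1+O(m^{-2})$. We also have to confirm that multiplying $(\sqrt2+o(1))$ by $e^{-1/2+o(1)}$ yields $\sqrt{2/e}+o(1)$. Both follow from continuity, so there is no genuine obstacle here. The substance of the bound already lies in Proposition~\ref{prop:parseval} and the two preceding lemmas.
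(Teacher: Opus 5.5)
Your proposal is correct and follows the paper's proof essentially step for step. Both arguments rewrite $m!\sqrt{\binom nm}/n^{(m+1)/2}$ as $\sqrt{m!}\,n^{-1/2}\bigl[\prod_{k<m}(1-k/n)\bigr]^{1/2}$, apply Lemma~\ref{lem:walsh-product-expansion} to the product, and evaluate $\varphi(1/2)=\sqrt{2/e}$. The only cosmetic difference is that the paper keeps $c$ free and optimizes it via Lemma~\ref{lem:walsh-c-optimization}, whereas you fix $n_m=\lfloor m^2/2\rfloor$ from the start; that suffices, since only the value $\varphi(1/2)$ is needed.
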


\begin{proof}
From Proposition~\ref{prop:parseval}, for every integer $n\ge m$,
\[
 C_m^{\sym}
 \ge \frac{m!}{n^{(m+1)/2}}\sqrt{\binom nm}.
\]
Using
\[
 \binom nm
 =\frac{n(n-1)\cdots(n-m+1)}{m!}
 =\frac{n^m}{m!}
   \prod_{k=0}^{m-1}\left(1-\frac{k}{n}\right),
\]
we obtain, without suppressing any factor,
\begin{align}
 \frac{m!}{n^{(m+1)/2}}\sqrt{\binom nm}
 &=\frac{m!}{n^{(m+1)/2}}
   \frac{n^{m/2}}{\sqrt{m!}}
   \left[
    \prod_{k=0}^{m-1}\left(1-\frac{k}{n}\right)
   \right]^{1/2}\notag\\
 &=\frac{\sqrt{m!}}{\sqrt n}
   \left[
    \prod_{k=0}^{m-1}\left(1-\frac{k}{n}\right)
   \right]^{1/2}.
 \label{eq:lowerproduct-open}
\end{align}
Fix $c>0$ and choose integers $n_m\sim cm^2$.  Then
\[
 \frac1{\sqrt{n_m}}
 =\frac1{m\sqrt c}\,(1+o(1)).
\]
By Lemma~\ref{lem:walsh-product-expansion},
\[
 \left[
  \prod_{k=0}^{m-1}\left(1-\frac{k}{n_m}\right)
 \right]^{1/2}
 =\exp\left\{-\frac1{4c}+o(1)\right\}
 =e^{-1/(4c)}(1+o(1)).
\]
Substitution into \eqref{eq:lowerproduct-open} yields
\begin{align*}
 C_m^{\sym}
 &\ge
 \sqrt{m!}\,
 \frac1{m\sqrt c}(1+o(1))
 e^{-1/(4c)}(1+o(1))\\
 &=\left(c^{-1/2}e^{-1/(4c)}+o(1)\right)
   \frac{\sqrt{m!}}m.
\end{align*}
Lemma~\ref{lem:walsh-c-optimization} shows that the largest constant produced
by this one-parameter choice is attained at $c=1/2$ and equals
$\sqrt{2/e}$.  Therefore
\[
 C_m^{\sym}
 \ge
 \left(\sqrt{\frac2e}+o(1)\right)\frac{\sqrt{m!}}m.
\]
The factor $m^{-1}$ is now explicit: it comes from
$\sqrt{n_m}^{-1}$ after the optimizing dimension $n_m\sim m^2/2$ is chosen.
\end{proof}

\endgroup

\section{Symmetrization and Newton identities}
\label{sec:symmetric-tensor-machinery}

The upper-bound argument uses the following tensor notation.  Let $H$ be a real or complex Hilbert space.  We distinguish the algebraic tensor product, on which the projective norm is defined, from its Hilbertian completion, on which symmetrization is an orthogonal projection.

For a real or complex Hilbert space $H$, let $H^{\otimes_{\rm alg} k}$ be the
$k$-fold algebraic tensor product and let $H^{\otimes_2 k}$ be its Hilbert
completion.  A permutation $\sigma\in S_k$ acts isometrically on both spaces
by permuting the tensor factors.  We write
\[
 P_{\sym,k}:=\frac1{k!}\sum_{\sigma\in S_k}U_\sigma.
\]
We also write $P_{\sym}^{(k)}:=P_{\sym,k}$ and suppress the degree when it
is clear.  At degree zero, $H^{\otimes0}=\operatorname{Sym}^0(H)$ is the
scalar field.
On $H^{\otimes_2 k}$ this is the orthogonal projection onto
\[
 \operatorname{Sym}_2^k(H):=P_{\sym,k}(H^{\otimes_2 k});
\]
on the algebraic tensor product its range will be denoted by
$\operatorname{Sym}_{\rm alg}^k(H)$.  In finite dimension the two spaces
coincide, and we simply write $\operatorname{Sym}^k(H)$.

If $u\in\operatorname{Sym}_{\rm alg}^j(H)$ and
$v\in\operatorname{Sym}_{\rm alg}^k(H)$, their symmetric product is
\begin{equation}\label{eq:symmetric-product-convention}
 u\,\widehat\otimes\,v:=P_{\sym,j+k}(u\otimes v).
\end{equation}
Thus $x^{\widehat\otimes k}=x^{\otimes k}$: no factorial is built into a
pure symmetric power.  The projective norm on $H^{\otimes_{\rm alg} k}$ is
\begin{equation}\label{eq:projective-norm-definition}
 \|u\|_\pi:=
 \inf\left\{
   \sum_\nu\prod_{j=1}^k\|x_{j,\nu}\|_2:
   u=\sum_\nu x_{1,\nu}\otimes\cdots\otimes x_{k,\nu}
 \right\}.
\end{equation}
Each $U_\sigma$ preserves the projective norm.  Hence $P_{\sym,k}$ is contractive for $\|\cdot\|_\pi$, and
\begin{equation}\label{eq:symmetric-projective-submultiplicative}
 \|u\widehat\otimes v\|_\pi\le \|u\|_\pi\|v\|_\pi.
\end{equation}
Every tensor to which $\|\cdot\|_\pi$ is applied below belongs to the algebraic tensor product.

For $v_1,\ldots,v_d\in H$, define
\begin{equation}\label{eq:elementary-power-sum-conventions}
 \begin{split}
 e_0(v)&:=1,\\
 e_\ell(v)&:=
 \sum_{1\le r_1<\cdots<r_\ell\le d}
 v_{r_1}\widehat\otimes\cdots\widehat\otimes v_{r_\ell}
 \quad (1\le\ell\le d),\\
 e_\ell(v)&:=0\quad(\ell>d),
 \qquad
 P_k(v):=\sum_{r=1}^d v_r^{\widehat\otimes k}.
 \end{split}
\end{equation}
The identity
\begin{equation}\label{eq:symmetric-generating-polynomial}
 \prod_{r=1}^d(1+t v_r)=\sum_{\ell\ge0}e_\ell(v)t^\ell
\end{equation}
holds in the symmetric tensor algebra.  Taking its formal logarithm and
then exponentiating gives
\begin{equation}\label{eq:newton-generating-identity-conventions}
 \sum_{\ell\ge0}e_\ell(v)t^\ell
 =\exp_{\widehat\otimes}\!\left(
   \sum_{k\ge1}\frac{(-1)^{k-1}}{k}P_k(v)t^k
 \right).
\end{equation}
Here $\exp_{\widehat\otimes}(w):=\sum_{j\ge0}
w^{\widehat\otimes j}/j!$.  Equivalently,
\begin{equation}\label{eq:newton-recursion-conventions}
 \ell e_\ell(v)=
 \sum_{k=1}^\ell(-1)^{k-1}
 e_{\ell-k}(v)\widehat\otimes P_k(v).
\end{equation}
Taking projective norms in this identity and using \eqref{eq:symmetric-projective-submultiplicative} yields
\begin{equation}\label{eq:newton-norm-majorant-conventions}
 \sum_{\ell\ge0}\|e_\ell(v)\|_\pi |t|^\ell
 \le
 \exp\left(\sum_{k\ge1}\frac{\|P_k(v)\|_\pi}{k}|t|^k\right)
\end{equation}
whenever the series on the right converges.

We take complex inner products to be linear in the first variable.  For unit
vectors $y_1,\ldots,y_m\in H$, let
$R=(\langle y_r,y_s\rangle)_{r,s=1}^m$ be their Gram matrix.  Since $P_{\sym,m}$ is an orthogonal projection,
\begin{align}
 \bigl\|P_{\sym,m}(y_1\otimes\cdots\otimes y_m)\bigr\|_2^2
 &=\bigl\langle P_{\sym,m}(y_1\otimes\cdots\otimes y_m),
                    y_1\otimes\cdots\otimes y_m\bigr\rangle \notag\\
 &=\frac1{m!}\sum_{\sigma\in S_m}
   \prod_{r=1}^m\langle y_{\sigma(r)},y_r\rangle
 =\frac{\operatorname{per}R}{m!}.
 \label{eq:abstract-permanent-identity}
\end{align}
The quantity in \eqref{eq:abstract-permanent-identity} is positive.  Indeed, each $y_r^\perp$ is a proper subspace of $H$, and a vector space over $\mathbb R$ or $\mathbb C$ cannot be covered by finitely many proper subspaces.  Hence we may choose $h\in H\setminus\bigcup_{r=1}^m y_r^\perp$.  Since $h^{\otimes m}$ is symmetric,
\[
 \left\langle
 P_{\sym,m}(y_1\otimes\cdots\otimes y_m),h^{\otimes m}
 \right\rangle
 =\prod_{r=1}^m\langle y_r,h\rangle\ne0.
\]
It is at most $1$, because $P_{\sym,m}$ is an orthogonal projection and $\|y_1\otimes\cdots\otimes y_m\|_2=1$.

\section{Permanent rigidity}
\label{sec:permanent-rigidity}

The next estimate gives a dimension-free consequence of a large Gram permanent.

\begin{theorem}
\label{thm:abstract-permanent-rigidity}
Let $y_1,\ldots,y_m$ be unit vectors in a real or complex Hilbert space and
let $R$ be their Gram matrix.  Set
\[
 \alpha:=\frac{\operatorname{per}R}{m!},
 \qquad L:=-\log\alpha.
\]
Then $0<\alpha\le1$, and there are an index $r_0$ and phases
$\omega_1,\ldots,\omega_m\in\mathbb T$ (signs in the real case), with
$\omega_{r_0}=1$, such that
\begin{equation}\label{eq:abstract-rigidity}
 \sum_{r=1}^m\|\omega_r y_r-y_{r_0}\|_2^2
 \le 2m\bigl(1-e^{-2L/m}\bigr)
 \le4L.
\end{equation}
\end{theorem}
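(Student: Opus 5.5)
The plan is to reduce \eqref{eq:abstract-rigidity} to a scalar inequality between the permanent and the moduli $c_{rs}:=|\langle y_r,y_s\rangle|$, and then prove that inequality.

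\textbf{Step 1: the bound on $\alpha$.} The bound $0<\alpha\le1$ is exactly what was shown after \eqref{eq:abstract-permanent-identity}, so nothing new is needed there.

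\textbf{Step 2: choosing the phases.} For fixed $r_0$ and each $r$, choose $\omega_r\in\mathbb T$ (a sign in the real case) with $\omega_r\langle y_r,y_{r_0}\rangle=c_{r r_0}$, and take $\omega_{r_0}=1$. Then
\[
 \|\omega_r y_r-y_{r_0}\|_2^2=2\bigl(1-c_{r r_0}\bigr).
\]
Hence the first inequality in \eqref{eq:abstract-rigidity} is equivalent to
\[
 \frac1m\sum_{r=1}^m c_{r r_0}\ \ge\ e^{-2L/m}=\alpha^{2/m}.
\]
The second inequality follows from $1-e^{-x}\le x$ applied with $x=2L/m$. So it suffices to prove the scalar statement
\begin{equation*}
 \alpha\ \le\ \rho^{m/2},\qquad \rho:=\max_{r_0}\frac1m\sum_{r=1}^m c_{r r_0}.
\end{equation*}
In the two extreme cases this is consistent. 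Equal vectors give $\rho=1=\alpha$. Orthonormal vectors give $\rho=1/m$ and $\alpha=1/m!\le m^{-m/2}$.

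\textbf{Step 3: reducing to a permanent of moduli.} Expanding \eqref{eq:abstract-permanent-identity} over the cycles of $\sigma$ and applying the triangle inequality gives
\[
 \alpha\le\frac{1}{m!}\operatorname{per}C,\qquad C=(c_{rs}).
\]
The matrix $C$ is symmetric, has unit diagonal, has entries in $[0,1]$, and every row average is at most $\rho$. I would then estimate $\operatorname{per}C/m!$ as follows.
\begin{itemize}[label={}]
\item Write $C$ as $\sqrt{C}\cdot\sqrt{C}$ entrywise. Apply Cauchy--Schwarz to the sum over $\sigma$, pairing the row factor with the column factor, so that each factor involves only a row average of entries bounded by $1$.
\item In the resulting expression, apply AM--GM along each permutation, so that $\prod_r c_{\sigma(r)r}^{1/2}$ is compared with averages $\frac1m\sum_r c_{\sigma(r)r}$.
\item Bound these averages by $\rho$ using the row-sum bound.
\end{itemize}
The target of this chain is $\operatorname{per}C/m!\le\rho^{m/2}$.

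\textbf{Main obstacle.} The crude bound $\operatorname{per}C\le\prod_r(\text{row sum})=(m\rho)^m$ loses a factor of roughly $e^m$ after division by $m!$. Moreover, the naive AM--GM step goes in the wrong Jensen direction when averaging over $\sigma$. So the real work is a Bregman/van der Waerden-type permanent bound adapted to the positive semidefinite structure of $R$.

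What I would try first is to use that structure directly. The identity $\alpha=\langle P_{\sym,m}u,u\rangle$ with $u=y_1\otimes\cdots\otimes y_m$ lets one test $P_{\sym,m}u$ against coherent tensors $h^{\otimes m}$ built from $h=\sum_r\omega_r y_r$. This should exhibit a unit vector $h$ for which $\alpha^{1/m}$ is controlled by $\frac1m\sum_r|\langle y_r,h\rangle|^2$. Comparing $h$ with the best $y_{r_0}$ would then give the claim.

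If that comparison loses a constant factor, I would instead accept a weaker exponent. As long as the constant $2$ in $e^{-2L/m}$ can be recovered, that weaker exponent still yields the stated form.
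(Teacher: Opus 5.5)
Your Steps 1--2 are correct and match the paper. With $\omega_r$ chosen so that $\omega_r\langle y_r,y_{r_0}\rangle=c_{rr_0}$, the first inequality reduces to finding $r_0$ with $\sum_r c_{rr_0}\ge m\alpha^{2/m}$, and the second is $1-e^{-u}\le u$.

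\textbf{The gap is Step 3.} The bound $\alpha\le\rho^{m/2}$ is the whole content of the theorem, and none of the routes you sketch establishes it.
\begin{itemize}
\item The Cauchy--Schwarz/AM--GM chain over permutations is not an argument. As you note yourself, AM--GM goes the wrong way, and row-sum bounds lose a factor $e^m$.
\item The coherent-tensor idea runs in the wrong direction. Testing against a unit $h^{\otimes m}$ gives $\sqrt\alpha=\|P_{\sym,m}(y_1\otimes\cdots\otimes y_m)\|_2\ge\prod_r|\langle y_r,h\rangle|$. That is a lower bound on $\alpha$, whereas you need an upper bound. An upper bound of the form ``$\alpha^{1/m}\lesssim\frac1m\sum_r|\langle y_r,h\rangle|^2$ for some unit $h$'' is itself a rigidity statement of the same strength as the theorem.
\item The fallback of accepting ``a weaker exponent'' would not produce the stated constants $2m(1-e^{-2L/m})$ and $4L$.
\end{itemize}

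\textbf{The missing ingredient} is a sharp Hadamard-type permanent inequality, which the paper imports from Carlen, Lieb and Loss. For an arbitrary complex $m\times m$ matrix $A$ with columns $a_1,\ldots,a_m$,
\[
|\operatorname{per}A|\le m!\,m^{-m/2}\prod_j\|a_j\|_2 .
\]
No positive semidefiniteness is required, so your remark that one needs a bound adapted to the PSD structure of $R$ is off target.

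\textbf{How it closes your argument.} Put $s_j:=\sum_i c_{ij}^2$ and apply the inequality to $R$ (or to your $C$). This gives $\alpha\le m^{-m/2}\prod_j s_j^{1/2}$. By AM--GM,
\[
\max_j s_j\ \ge\ \frac1m\sum_j s_j\ \ge\ \Bigl(\prod_j s_j\Bigr)^{1/m}\ \ge\ m\alpha^{2/m}.
\]
Since $0\le c_{ij}\le1$, you have $\sum_i c_{ij}\ge s_j$. Taking $r_0$ to be a maximizing index (using the symmetry of $C$) yields exactly your target $\rho\ge\alpha^{2/m}=e^{-2L/m}$.

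The paper phrases this last step through the row defects $D_r=\sum_{s\ne r}(1-c_{rs}^2)$ and the inequality $1-c\le1-c^2$, which amounts to the same thing.
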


\begin{proof}
Write $R=(\rho_{rs})$ and
$s_j:=\sum_{i=1}^m|\rho_{ij}|^2$.  The permanent inequality of Carlen,
Lieb and Loss \cite[Theorem~1.1, Eq.~(1.1)]{CarlenLiebLoss} gives
\[
 e^{-L}=\alpha\le m^{-m/2}\prod_{j=1}^m s_j^{1/2}.
\]
Squaring and rearranging,
\[
 \prod_{j=1}^m s_j\ge m^m e^{-2L}.
\]
Hence
\[
 \left(\prod_{j=1}^m s_j\right)^{1/m}
 \ge m e^{-2L/m}.
\]
The arithmetic--geometric mean inequality therefore yields
\[
 \frac1m\sum_{j=1}^m s_j
 \ge \left(\prod_{j=1}^m s_j\right)^{1/m}
 \ge m e^{-2L/m},
\]
and thus
\begin{equation}\label{eq:rigidity-sum-sj}
 \sum_{j=1}^m s_j\ge m^2e^{-2L/m}.
\end{equation}
Since $R$ has diagonal one,
\[
 \sum_{r<s}(1-|\rho_{rs}|^2)
 =\frac12\left(m^2-\sum_{j=1}^m s_j\right)
 \le\frac{m^2}{2}\bigl(1-e^{-2L/m}\bigr).
\]
Consequently the row defects
\[
 D_r:=\sum_{s\ne r}(1-|\rho_{rs}|^2)
\]
have average at most $m(1-e^{-2L/m})$.
\begingroup
Choose $r_0$ so that
\[
 D_{r_0}\le m\bigl(1-e^{-2L/m}\bigr).
\]
For $1\le r\le m$, define
\[
 \omega_r:=
 \begin{cases}
  \overline{\rho_{rr_0}}/|\rho_{rr_0}|,&\rho_{rr_0}\ne0,\\[4pt]
  1,&\rho_{rr_0}=0.
 \end{cases}
\]
Then $\omega_{r_0}=1$ and, since the inner product is linear in its first
variable,
\[
 \langle\omega_r y_r,y_{r_0}\rangle
 =\omega_r\rho_{rr_0}=|\rho_{rr_0}|.
\]
In the real case these phases are signs.
\endgroup
Then
\begin{align*}
 \sum_{r=1}^m\|\omega_r y_r-y_{r_0}\|_2^2
 &=2\sum_{r\ne r_0}(1-|\rho_{rr_0}|)\\
 &\le2\sum_{r\ne r_0}(1-|\rho_{rr_0}|^2)\\
 &=2D_{r_0}
 \le2m\bigl(1-e^{-2L/m}\bigr).
\end{align*}
Finally, $1-e^{-u}\le u$ gives the bound $4L$.
\end{proof}

Thus, if the symmetric projection has Hilbert norm close to $1$, then after multiplying the factors by suitable phases, all of them are close to one fixed factor.  The estimate is independent of the dimension of $H$.

\section{Barycentric recentering and transverse cancellation}
\label{sec:barycentric-recentering}

Assume that the vectors lie in a sufficiently small spherical cap centered at $e_0$.  We replace $e_0$ by the normalized sum of the vectors.  With this choice, the components orthogonal to the new direction have sum zero.  The following theorem records the estimates needed later.

\begingroup
\begin{theorem}
\label{thm:abstract-moving-anchor}
There is an absolute $\eta_0\in(0,1/4]$ with the following property.  Let
$H$ be a real Hilbert space, let $d\ge1$, $S\ge1$, $0\le L\le S$, and
$0<\eta\le\eta_0$.  Suppose that $e_0,y_1,\ldots,y_d$ are unit vectors and
\begin{equation}\label{eq:barycentric-hypotheses}
 \sum_{r=1}^d\|y_r-e_0\|_2^2\le4L,
 \qquad
 \max_r\|y_r-e_0\|_2\le\frac{\eta}{S}.
\end{equation}
Then the following assertions hold:
\begin{enumerate}[label=\textup{(\roman*)}]
\item The vector
\[
 Y:=\sum_{r=1}^d y_r
\]
is nonzero.  Moreover, if
\[
 e:=\frac{Y}{\|Y\|_2},
\]
then
\begin{align}
 \|e-e_0\|_2&\le\frac{\eta}{S},
 \label{eq:abstract-moving-anchor-distance}\\
 \sum_{r=1}^d\|y_r-e\|_2^2&\le4L,
 \label{eq:abstract-moving-energy}\\
 \max_r\|y_r-e\|_2&\le\frac{2\eta}{S}.
 \label{eq:abstract-moving-individual}
\end{align}

\item Set
\[
 a_r:=\langle y_r,e\rangle,
 \qquad z_r:=y_r-a_re.
\]
Then
\begin{equation}\label{eq:abstract-longitudinal-positive}
 a_r\ge\frac12.
\end{equation}
Consequently, $v_r:=a_r^{-1}z_r$ is well defined, and
\begin{equation}\label{eq:barycentric-decomposition}
 y_r=a_re+z_r=a_r(e+v_r),
 \qquad z_r\perp e.
\end{equation}
Furthermore,
\begin{align}
 \sum_{r=1}^d z_r&=0,
 \label{eq:abstract-z-cancellation}\\
 \delta:=\max_r\|v_r\|_2&\le\frac{4\eta}{S},
 \label{eq:abstract-delta}\\
 V:=\sum_{r=1}^d\|v_r\|_2^2&\le16L\le16S,
 \label{eq:abstract-V}\\
 \left\|\sum_{r=1}^d v_r\right\|_2
 &\le\frac{16\eta L}{S}\le16\eta.
 \label{eq:abstract-P1}
\end{align}

\item The following exact expansion holds:
\begin{equation}\label{eq:barycentric-exact-expansion}
 P_{\sym,d}(y_1\otimes\cdots\otimes y_d)
 =\left(\prod_{r=1}^d a_r\right)
   \sum_{\ell=0}^d
   e_\ell(v)\widehat\otimes e^{\otimes(d-\ell)}.
\end{equation}
\end{enumerate}
\end{theorem}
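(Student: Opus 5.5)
Everything is elementary Hilbert-space geometry once we write $y_r=e_0+u_r$ with $\sum\|u_r\|^2\le4L$ and $\|u_r\|\le\eta/S$; the only subtle point is that $d$ may be large compared with $S$, so every estimate must be dimension-free in $d$. We also use repeatedly that for unit vectors $\|y-x\|^2=2-2\langle y,x\rangle$.

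For (i), put $\bar u:=\frac1d\sum_r u_r$, so that $Y=d(e_0+\bar u)$. We have $\|\bar u\|\le\eta/S\le1/4$, hence $Y\ne0$. For $e$, compare with $e_0$: the map $w\mapsto w/\|w\|$ near $e_0$ gives only $\|e-e_0\|\le 2\|\bar u\|$, which loses a factor $2$. A sharper route uses the unit-vector identity
\[
\langle e_0,\bar u\rangle=-\tfrac1{2d}\sum_r\|u_r\|^2,
\]
so the component of $\bar u$ along $e_0$ is negative and of second order. Then $e$ is the normalization of $e_0+\bar u$, and $\|e-e_0\|\le\|\bar u^\perp\|/\|e_0+\bar u\|\cdot(1+\text{small})$. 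Doing this carefully with the cap constraint, I expect $\|e-e_0\|\le\eta/S$. If that fails, I would shrink $\eta_0$ or accept the constant-factor loss absorbed into the later constants.

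The energy bound \eqref{eq:abstract-moving-energy} is the key variational fact. The function $x\mapsto\sum_r\|y_r-x\|^2=2d-2\langle Y,x\rangle$ over unit $x$ is minimized exactly at $x=e=Y/\|Y\|$, so its value at $e$ is at most its value at $e_0$, which is at most $4L$. The individual bound \eqref{eq:abstract-moving-individual} is the triangle inequality.

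For (ii), we have $a_r=1-\tfrac12\|y_r-e\|^2\ge1-2\eta^2/S^2\ge1/2$. Cancellation \eqref{eq:abstract-z-cancellation} holds because
\[
\sum_r z_r=Y-\langle Y,e\rangle e=0,
\]
since $Y\parallel e$. Next, $\|z_r\|\le\|y_r-e\|$, as $z_r$ is the projection of $y_r-e$ onto $e^\perp$. Together with $a_r\ge1/2$ this gives $\delta\le4\eta/S$ and $V\le4\sum_r\|y_r-e\|^2\le16L$. For \eqref{eq:abstract-P1}, write $v_r=z_r+(a_r^{-1}-1)z_r$ and use $\sum_r z_r=0$:
\[
\Big\|\sum_r v_r\Big\|\le\sum_r|a_r^{-1}-1|\,\|z_r\|\le\sum_r 2(1-a_r)\|z_r\|.
\]
Since $1-a_r=\tfrac12\|y_r-e\|^2$ and $\|z_r\|\le2\eta/S$, this is at most $(2\eta/S)\sum_r\|y_r-e\|^2\le 8\eta L/S$, which is within the stated $16\eta L/S$. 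Finally $L\le S$ gives $16\eta$.

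For (iii), expand $y_1\otimes\cdots\otimes y_d=\prod_r a_r\bigotimes_r(e+v_r)$ multilinearly and apply $P_{\sym,d}$. The terms in which the factor $v_r$ is chosen for $r$ in a set $T$ with $|T|=\ell$ symmetrize to
\[
P_{\sym,d}\Big(\bigotimes_{r\in T}v_r\otimes e^{\otimes(d-\ell)}\Big),
\]
because $P_{\sym}$ is invariant under reordering the factors. This equals $\big(\widehat\bigotimes_{r\in T}v_r\big)\widehat\otimes e^{\otimes(d-\ell)}$, using $P_{\sym}(P_{\sym}u\otimes w)=P_{\sym}(u\otimes w)$. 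Summing over $T$ gives $e_\ell(v)\widehat\otimes e^{\otimes(d-\ell)}$, which is \eqref{eq:barycentric-exact-expansion}.

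The main obstacle is the sharp constant in \eqref{eq:abstract-moving-anchor-distance}. The plan there is the second-order cancellation of the longitudinal part of $\bar u$, with a fallback of adjusting $\eta_0$.
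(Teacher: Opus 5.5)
Your arguments for the energy bound, for all of (ii), and for (iii) are correct and match the paper. Your estimate $\bigl\|\sum_r v_r\bigr\|_2\le 8\eta L/S$ is even slightly sharper than the stated one. The gap is the anchor-distance bound \eqref{eq:abstract-moving-anchor-distance}, which you state only as an expectation. The stated constants downstream depend on it: \eqref{eq:abstract-moving-individual} comes from it via the triangle inequality, and your chain $\delta\le 2\max_r\|y_r-e\|_2\le4\eta/S$ comes from that.

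Neither fallback repairs this. The inequality is sharp: take all $y_r$ equal to one unit vector at distance exactly $\eta/S$ from $e_0$, so that $e=y_r$. Hence a multiplicative loss $1+O(\eta^2/S^2)$ cannot be removed by shrinking $\eta_0$. Accepting a factor $2$ proves a weaker theorem, with $\max_r\|y_r-e\|_2\le3\eta/S$ and, along your chain, $\delta\le6\eta/S$.

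Your proposed $\sin$-type comparison also does not close as described. If $\theta$ is the angle between $e$ and $e_0$, then exactly $\|e-e_0\|_2=\sin\theta/\cos(\theta/2)$ and $\sin\theta=\|\bar u^\perp\|_2/\|e_0+\bar u\|_2$. The natural bounds are $\|\bar u^\perp\|_2\le\sin\theta_0$ and $\|e_0+\bar u\|_2\ge\langle e_0+\bar u,e_0\rangle\ge\cos\theta_0$. These give only $\sin\theta\le\tan\theta_0$, and $\tan\theta_0/\cos(\theta/2)>2\sin(\theta_0/2)=\eta/S$.

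The missing step is a termwise tangent (ratio) comparison, which is the paper's argument. Let $\theta_r$ be the angle between $y_r$ and $e_0$. Your identity $\langle e_0,u_r\rangle=-\tfrac12\|u_r\|_2^2$ says $\cos\theta_r=\langle y_r,e_0\rangle$, and the component of $u_r$ orthogonal to $e_0$ has norm $\sin\theta_r$. The cap condition gives $\theta_r\le\theta_0:=2\arcsin(\eta/(2S))<\pi/2$, hence $\sin\theta_r\le\tan\theta_0\,\cos\theta_r$.

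Summing over $r$, write $Y=Ae_0+W$ with $A=\sum_r\cos\theta_r>0$, $W\perp e_0$ and $\|W\|_2\le\sum_r\sin\theta_r\le A\tan\theta_0$. Then $\tan\theta=\|W\|_2/A\le\tan\theta_0$, so $\theta\le\theta_0$. Therefore $\|e-e_0\|_2=2\sin(\theta/2)\le2\sin(\theta_0/2)=\eta/S$.

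Geometrically, the cone over a spherical cap of angular radius less than $\pi/2$ is convex, so $Y$ stays in it. With this step inserted, the rest of your proof goes through as written.
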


\begin{proof}
$(i)$ Set $\varepsilon:=\eta/S$ and $\theta_0:=2\arcsin(\varepsilon/2)$.  Since $\eta_0\le1/4$, we have $\theta_0<\pi/2$.  For each $r$, let $\theta_r\in[0,\pi]$ be the angle between $y_r$ and $e_0$.  Since
\[
 \|y_r-e_0\|_2
 =2\sin\frac{\theta_r}{2}
 \le\varepsilon,
\]
we have $0\le\theta_r\le\theta_0$.  If $\theta_r>0$, define
\[
 u_r:=
 \frac{y_r-\langle y_r,e_0\rangle e_0}
 {\|y_r-\langle y_r,e_0\rangle e_0\|_2};
\]
if $\theta_r=0$, set $u_r:=0$.  Then $u_r\perp e_0$, $\|u_r\|_2=1$ whenever $\theta_r>0$, and
\[
 y_r=\cos\theta_r\,e_0+\sin\theta_r\,u_r.
\]
Consequently, $Y=Ae_0+W$, where
\[
 A:=\sum_r\cos\theta_r>0,
 \qquad
 \|W\|_2\le\sum_r\sin\theta_r
       \le A\tan\theta_0.
\]
Since $A>0$ and $\|W\|_2\le A\tan\theta_0$, we have $Y\ne0$.  If $\theta$ is the angle between $Y$ and $e_0$, then $\theta\le\theta_0$, so
\[
 \|e-e_0\|_2
 =2\sin\frac{\theta}{2}
 \le2\sin\frac{\theta_0}{2}
 =\varepsilon.
\]
This proves \eqref{eq:abstract-moving-anchor-distance}.

For any unit vector $u$,
\[
 \sum_{r=1}^d\|y_r-u\|_2^2=2d-2\langle Y,u\rangle.
\]
Because $\langle Y,u\rangle\le\|Y\|_2=\langle Y,e\rangle$ for every unit vector $u$, the left-hand side is minimized at $u=e$.  Taking $u=e_0$ and using \eqref{eq:barycentric-hypotheses} gives \eqref{eq:abstract-moving-energy}.  Together with \eqref{eq:abstract-moving-anchor-distance} and the cap condition, this gives \eqref{eq:abstract-moving-individual}, and hence proves \textup{(i)}.

\smallskip

$(ii)$ Since $Y=\|Y\|_2e$, projection onto $e^\perp$ gives $\sum_r z_r=0$.  Also,
\[
 a_r=\langle y_r,e\rangle
 =1-\frac12\|y_r-e\|_2^2\ge\frac12,
\]
after fixing $\eta_0\le1/4$.  Since $\|z_r\|_2\le\|y_r-e\|_2$,
\[
 \max_r\|v_r\|_2\le2\max_r\|z_r\|_2\le\frac{4\eta}{S}
\]
and
\[
 \sum_r\|v_r\|_2^2
 \le4\sum_r\|z_r\|_2^2
 \le4\sum_r\|y_r-e\|_2^2
 \le16L.
\]
From $\|y_r\|_2=1$ and $y_r=a_re+z_r$ we have $a_r=(1-\|z_r\|_2^2)^{1/2}$.  Since $\|z_r\|_2\le1/2$,
\[
 0\le a_r^{-1}-1\le2\|z_r\|_2^2.
\]
Therefore, using $\sum_r z_r=0$,
\begin{align*}
 \left\|\sum_rv_r\right\|_2
 &=\left\|\sum_r(a_r^{-1}-1)z_r\right\|_2\\
 &\le2\max_r\|z_r\|_2\sum_r\|z_r\|_2^2
 \le\frac{16\eta L}{S}.
\end{align*}
This proves \textup{(ii)}.

\smallskip

$(iii)$ By multilinearity,
\[
 P_{\sym,d}\!\left(\bigotimes_{r=1}^d(e+v_r)\right)
 =\sum_{I\subset\{1,\ldots,d\}}
 P_{\sym,d}\!\left(
   \bigotimes_{r\in I}v_r\otimes e^{\otimes(d-|I|)}
 \right),
\]
where the factors indexed by $I$ are taken in increasing order.  Since
$P_{\sym,d}(P_{\sym,|I|}\otimes\mathrm{Id})=P_{\sym,d}$, the sum of the terms with $|I|=\ell$ is
$e_\ell(v)\widehat\otimes e^{\otimes(d-\ell)}$.  Multiplication by $\prod_r a_r$ proves \eqref{eq:barycentric-exact-expansion}, completing the proof of \textup{(iii)}.
\end{proof}
\endgroup

The vectors $v_r$ need not have zero sum, but \eqref{eq:abstract-P1} gives a uniform bound for $P_1(v)=\sum_r v_r$.  For $k\ge2$, the quantities $P_k(v)$ are controlled by $\delta$ and $V$.  Newton's identities then give bounds for all $e_\ell(v)$.  The factor $\prod_r a_r$ supplies the compensating quadratic term.  Since $\|y_r\|_2=1$ and $y_r=a_r(e+v_r)$,
\[
 a_r=(1+\|v_r\|_2^2)^{-1/2}.
\]
\begingroup
\color{black}
Thus the recentering supplies simultaneously the three scales used in the
Newton estimate:
\[
 V=O(S),\qquad \delta=O(\eta/S),\qquad
 \|P_1(v)\|_\pi=O(\eta).
\]
The first two control the higher power sums, while the last controls the
linear Newton term.
\endgroup

\begin{proposition}
\label{prop:abstract-moving-estimates}
There are absolute constants $c,C>0$ such that the following holds, after the constant $\eta_0$ in Theorem~\ref{thm:abstract-moving-anchor}
has been chosen sufficiently small.  Let $S\ge1$,
$0<\eta\le\eta_0$, and let $v_1,\ldots,v_d$ belong to a real Hilbert space.
Suppose that
\begin{equation}\label{eq:compensated-newton-hypotheses}
 \delta:=\max_r\|v_r\|_2\le\frac{4\eta}{S},
 \qquad
 V:=\sum_r\|v_r\|_2^2\le16S,
 \qquad
 \left\|\sum_rv_r\right\|_2\le16\eta.
\end{equation}
Define
\[
 a_r:=(1+\|v_r\|_2^2)^{-1/2},
 \qquad A(v):=\prod_{r=1}^d a_r,
 \qquad t_S:=1+\frac cS.
\]
Then
\begin{align}
 A(v)\sum_{\ell\ge0}\|e_\ell(v)\|_\pi t_S^\ell&\le C,
 \label{eq:abstract-moving-mgf}\\
 A(v)\sum_{\ell\ge1}\sqrt{\log(e\ell)}\,
          \|e_\ell(v)\|_\pi
 &\le C\sqrt{\log(e+S)}.
 \label{eq:abstract-moving-weighted}
\end{align}
\end{proposition}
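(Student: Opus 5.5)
The plan is to feed power-sum bounds into the Newton majorant \eqref{eq:newton-norm-majorant-conventions} and let the factor $A(v)$ cancel the quadratic term. First I bound the power sums $P_k(v)$ in projective norm. For $k=1$ the projective norm on $H^{\otimes1}=H$ is the Hilbert norm, so \eqref{eq:compensated-newton-hypotheses} gives $\|P_1(v)\|_\pi\le16\eta$. For $k\ge2$ the triangle inequality and the definition \eqref{eq:projective-norm-definition} give
\[
 \|P_k(v)\|_\pi\le\sum_r\|v_r\|_2^k\le\delta^{k-2}V .
\]
Inserting these bounds into \eqref{eq:newton-norm-majorant-conventions} at $t=t_S$ gives
\[
 \sum_{\ell\ge0}\|e_\ell(v)\|_\pi t_S^\ell\le\exp\Bigl(16\eta t_S+\tfrac{V}{2}t_S^2+V\sum_{k\ge3}\tfrac{\delta^{k-2}t_S^k}{k}\Bigr).
\]
The series converges because $\delta t_S\le 8\eta/S<1$.

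The main point, and the step I expect to be the obstacle, is that the term $\tfrac V2t_S^2$ may be as large as $8S$. It has to be cancelled by $A(v)$, and this cancellation must be done precisely. Using $\log(1+x)\ge x-x^2/2$ for $x\ge0$, I get
\[
 A(v)=\exp\Bigl(-\tfrac12\sum_r\log(1+\|v_r\|_2^2)\Bigr)\le\exp\Bigl(-\tfrac V2+\tfrac14\sum_r\|v_r\|_2^4\Bigr)\le\exp\Bigl(-\tfrac V2+\tfrac{\delta^2V}{4}\Bigr).
\]
Multiplying, the exponent becomes
\[
 16\eta t_S+\tfrac V2(t_S^2-1)+\tfrac{\delta^2V}4+V\sum_{k\ge3}\tfrac{\delta^{k-2}t_S^k}{k}.
\]
Each of these terms is $O(1)$:
\begin{itemize}
\item $\tfrac V2(t_S^2-1)\le8S\,(2c/S+c^2/S^2)=O(c)$;
\item $\delta^2V\le(16\eta^2/S^2)\cdot16S=O(\eta^2)$;
\item the $k\ge3$ tail is at most $V\delta t_S^3/(1-\delta t_S)\le16S\cdot(4\eta/S)\cdot O(1)=O(\eta)$.
\end{itemize}
This proves \eqref{eq:abstract-moving-mgf} with an absolute $C$, for any fixed $c\le1$ and $\eta_0$ small. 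It also explains the choice $t_S=1+c/S$: the excess $t_S^2-1$ must be $O(1/S)$ because $V$ is allowed to be of order $S$.

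For \eqref{eq:abstract-moving-weighted} I would reduce to \eqref{eq:abstract-moving-mgf} through the pointwise inequality
\[
 \sqrt{\log(e\ell)}\le K\sqrt{\log(e+S)}\,t_S^\ell\qquad(\ell\ge1),
\]
with $K$ depending only on $c$. Set $x=\ell/S$. Then
\[
 \sqrt{\log(e\ell)}\le\sqrt{\log(e+S)}+\sqrt{\log(1+x)},
\]
and for $c\le1$ we have $t_S^\ell\ge e^{c\ell/(2S)}=e^{cx/2}$. Since $\sqrt{\log(1+x)}\le K_ce^{cx/2}$ and $\log(e+S)\ge1$, the inequality follows. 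Multiplying by $A(v)\|e_\ell(v)\|_\pi$ and summing over $\ell\ge1$ then yields \eqref{eq:abstract-moving-weighted} from \eqref{eq:abstract-moving-mgf}, after enlarging $C$.
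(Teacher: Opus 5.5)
Your proposal is correct and follows the paper's proof essentially step for step. You use the same bounds $\|P_1(v)\|_\pi\le16\eta$ and $\|P_k(v)\|_\pi\le V\delta^{k-2}$ in the Newton majorant at $t=t_S$, the same compensation $\log A(v)\le -V/2+\delta^2V/4$, and the same reduction of \eqref{eq:abstract-moving-weighted} to \eqref{eq:abstract-moving-mgf} through a pointwise bound $\sqrt{\log(e\ell)}\le C\sqrt{\log(e+S)}\,t_S^\ell$.

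The only deviation is your uniform inequality $\sqrt{\log(e\ell)}\le\sqrt{\log(e+S)}+\sqrt{\log(1+\ell/S)}$. It is true, but it is not immediate, and you state it without proof. Its unsquared form fails when $S$ and $\ell/S$ are both large (for instance $S=100$, $\ell=10^4$). A proof therefore needs two cases. When $\ell/S\le 1/(e-1)$, use $e\ell\le(e+S)(1+\ell/S)$. When $\ell/S\ge 1/(e-1)$, use the cross term $2\sqrt{\log(e+S)\log(1+\ell/S)}\ge1$.

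The paper avoids this by splitting at $\ell=S$. For $\ell>S$ it uses the transparent bound $\log(e\ell)\le\log(e+S)+\log(e\ell/S)$.
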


\begin{proof}
For $k\ge2$,
\[
 \|P_k(v)\|_\pi
 \le\sum_r\|v_r\|_2^k
 \le V\delta^{k-2},
\]
and $\|P_1(v)\|_\pi\le16\eta$.  Substitution in \eqref{eq:newton-norm-majorant-conventions} gives, for $\delta t<1$,
\begin{equation}\label{eq:compensated-newton-majorant-proof}
 \sum_{\ell\ge0}\|e_\ell(v)\|_\pi t^\ell
 \le\exp\left\{
   16\eta t+\frac{Vt^2}{2}
   +\frac{V\delta t^3}{3(1-\delta t)}
 \right\}.
\end{equation}
Also, $\log(1+s)\ge s-s^2/2$ for $s\ge0$, and hence
\begin{equation}\label{eq:longitudinal-compensation-proof}
 \log A(v)
 =-\frac12\sum_r\log(1+\|v_r\|_2^2)
 \le-\frac V2+\frac{\delta^2V}{4}.
\end{equation}
Choose $0<c\le1$, set $t=t_S$, and take $\eta_0$ small enough that
\[
 \delta t_S\le4\eta_0(1+c)\le\frac12.
\]
Combining \eqref{eq:compensated-newton-majorant-proof} and \eqref{eq:longitudinal-compensation-proof},
\begin{align}
 \log\!\left[
 A(v)\sum_{\ell\ge0}\|e_\ell(v)\|_\pi t_S^\ell
 \right]
 &\le 16\eta t_S+\frac V2(t_S^2-1)
 +\frac{V\delta t_S^3}{3(1-\delta t_S)}
 +\frac{\delta^2V}{4}.
 \label{eq:compensated-newton-combined}
\end{align}
Because $t_S=1+c/S$, $V\le16S$, and $S\ge1$,
\begin{align*}
 V(t_S^2-1)
 &=V\left(\frac{2c}{S}+\frac{c^2}{S^2}\right)
 \le32c+16c^2,\\
 V\delta
 &\le16S\frac{4\eta}{S}=64\eta,\\
 \delta^2V
 &\le\left(\frac{4\eta}{S}\right)^2 16S
 =\frac{256\eta^2}{S}\le256\eta^2.
\end{align*}
Since $\delta t_S\le1/2$ and $t_S\le2$, the right-hand side of \eqref{eq:compensated-newton-combined} is bounded by an absolute constant.  This proves \eqref{eq:abstract-moving-mgf}.

To prove \eqref{eq:abstract-moving-weighted}, note first that, after decreasing $c$ if necessary, $\log t_S\ge c/(2S)$.  We claim that
\[
 \sqrt{\log(e\ell)}
 \le C\sqrt{\log(e+S)}\,t_S^\ell,
 \qquad \ell\ge1,
\]
for all $S\ge1$ and $\ell\ge1$.  If $1\le\ell\le S$, this follows from $\log(e\ell)\le1+\log S\le2\log(e+S)$.  If $\ell>S$, set $x=\ell/S>1$.  Then $t_S^\ell\ge e^{cx/2}$ and
\[
 \sqrt{\log(e\ell)}
 \le \sqrt{\log(e+S)}+\sqrt{\log(ex)}.
\]
Since $\log(e+S)\ge1$ and
$\sup_{x\ge1}(1+\sqrt{\log(ex)})e^{-cx/2}<\infty$, the claimed bound follows.  Multiplying it by $A(v)\|e_\ell(v)\|_\pi$ and summing over $\ell$ gives \eqref{eq:abstract-moving-weighted} from \eqref{eq:abstract-moving-mgf}.
\end{proof}

Combining Theorem~\ref{thm:abstract-moving-anchor} with Proposition~\ref{prop:abstract-moving-estimates} gives uniform projective-norm bounds for the terms in \eqref{eq:barycentric-exact-expansion}.

\begin{remark}[Relation with norm-attainment rigidity]
For nonzero symmetric multilinear forms of degree at least three on a complex Hilbert space, Carando and Rodr\'iguez proved that every norm-attaining tuple is collinear and obtained a qualitative Bollob\'as-type stability result; see \cite[Theorem~1.1 and Corollary~1.2]{CarandoRodriguez}.  Theorem~\ref{thm:abstract-permanent-rigidity} gives the quantitative estimate used here.
\end{remark}

\section{The symmetric orbit family and permanent localization}\label{sec:permanent-geometry}

For a finite set $I$, let
\begin{equation}\label{eq:counting-measure}
 \mu_I:=2^{-|I|}\sum_{\varepsilon\in\{-1,1\}^{I}}\delta_\varepsilon
\end{equation}
be the normalized counting measure on $\{-1,1\}^I$.  If $H$ is a finite-dimensional real Hilbert space, let $\gamma_H$ be the centered Gaussian probability measure on $H$ with identity covariance.  We write $\mu$ and $\gamma$ when the underlying space is clear.

For the symmetric random form, one independent sign is assigned to each permutation orbit.  The $L^2(\mu)$ norm at a fixed configuration is therefore the Euclidean norm of the vector of orbit sums.  Lemma~\ref{lem:permanent-control} bounds this norm by the Gram permanent.  The same permanent is related to the symmetrized tensor product by \eqref{eq:abstract-permanent-identity}.

Let $\mathcal O_{m,n}$ be the set of orbits of
$\{1,\ldots,n\}^m$ under permutations of the $m$ positions.  For
$\varepsilon=(\varepsilon_{\mathcal O})_{\mathcal O\in\mathcal O_{m,n}}
\in\{-1,1\}^{\mathcal O_{m,n}}$, define
\[
 A_{m,n}^{\varepsilon}:(\ell_\infty^n)^m\longrightarrow\R
\]
by
\begin{equation}\label{eq:random-symmetric-form}
 A_{m,n}^{\varepsilon}(x^{(1)},\ldots,x^{(m)})
 :=
 \sum_{\mathcal O\in\mathcal O_{m,n}}
 \varepsilon_{\mathcal O}c_{\mathcal O}(x),
\end{equation}
where
\begin{equation}\label{eq:orbit-coefficient}
 c_{\mathcal O}(x)
 :=
 \sum_{(i_1,\ldots,i_m)\in\mathcal O}
 x^{(1)}_{i_1}\cdots x^{(m)}_{i_m}.
\end{equation}
We write
\[
 c(x):=\bigl(c_{\mathcal O}(x)\bigr)_{\mathcal O\in\mathcal O_{m,n}},
\]
so that
\[
 \int |A_{m,n}^{\varepsilon}(x)|^2\,d\mu(\varepsilon)=\|c(x)\|_2^2.
\]
For real multilinear forms on $\ell_\infty^n$, the norm is attained on
cube vertices.  Define the configuration space
\[
 \mathfrak X_{m,n}:=(\{-1,1\}^n)^m.
\]
For $x=(x^{(1)},\ldots,x^{(m)})\in\mathfrak X_{m,n}$, define
\[
 y_r:=n^{-1/2}x^{(r)}\in S^{n-1},\qquad 1\le r\le m.
\]
The Gram map and the normalized symmetrized-product map are
\begin{align}
 R:\mathfrak X_{m,n}&\longrightarrow\R^{m\times m},
 &R(x)&:=\bigl(\langle y_r,y_s\rangle\bigr)_{r,s=1}^m,\label{eq:R-definition}\\
 \Phi:\mathfrak X_{m,n}&\longrightarrow\operatorname{Sym}^m(\R^n),
 &\Phi(x)&:=P_{\sym}(y_1\otimes\cdots\otimes y_m).\label{eq:Phi-definition}
\end{align}
We also define the unnormalized symmetrized-product map
\begin{equation}\label{eq:Psi-definition}
 \Psi:\mathfrak X_{m,n}\longrightarrow\operatorname{Sym}^m(\R^n),
 \qquad
 \Psi(x):=P_{\sym}(x^{(1)}\otimes\cdots\otimes x^{(m)})
 =n^{m/2}\Phi(x).
\end{equation}
Here $P_{\sym}:(\R^n)^{\otimes_2 m}\to\operatorname{Sym}^m(\R^n)$ is
the orthogonal symmetrization defined in Section~\ref{sec:symmetric-tensor-machinery}.

\subsection{Orbit coefficients and the permanent statistic}

\begin{lemma}
\label{lem:permanent-control}
Let
\[
 G(x):=
 \bigl(\langle x^{(r)},x^{(s)}\rangle\bigr)_{r,s=1}^m
 =nR(x).
\]
Then
\begin{equation}\label{eq:permanent-identity}
 \|\Phi(x)\|_2^2
 =
 \frac{\per R(x)}{m!},
 \qquad
 \|\Psi(x)\|_2^2
 =
 \frac{\per G(x)}{m!}.
\end{equation}
Moreover,
\begin{equation}\label{eq:percontrol}
 \sum_{\mathcal O\in\mathcal O_{m,n}}
 |c_{\mathcal O}(x)|^2
 \le
 \per G(x)
 =
 n^m\per R(x).
\end{equation}
More generally, write
\[
 v=\sum_{i_1,\ldots,i_m=1}^n
 v_{i_1,\ldots,i_m}\,
 e_{i_1}\otimes\cdots\otimes e_{i_m}
\]
and define
\[
 c_{\mathcal O}(v):=
 \sum_{(i_1,\ldots,i_m)\in\mathcal O}v_{i_1,\ldots,i_m},
 \qquad
 c(v):=(c_{\mathcal O}(v))_{\mathcal O\in\mathcal O_{m,n}}.
\]
Then
\begin{equation}\label{eq:increment-projection}
 \sum_{\mathcal O}|c_{\mathcal O}(v)|^2
 \le m!\,\|P_{\sym}v\|_2^2.
\end{equation}
\end{lemma}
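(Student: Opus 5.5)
The two identities in \eqref{eq:permanent-identity} follow from \eqref{eq:abstract-permanent-identity}. For the unit vectors $y_r=n^{-1/2}x^{(r)}$ in $H=\R^n$, that identity gives $\|\Phi(x)\|_2^2=\per R(x)/m!$ directly. Since $\Psi=n^{m/2}\Phi$ and $G=nR$, multilinearity of the permanent gives $\per G=n^m\per R$. Hence
\[
 \|\Psi(x)\|_2^2=n^m\,\frac{\per R(x)}{m!}=\frac{\per G(x)}{m!}.
\]

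The main step is the general inequality \eqref{eq:increment-projection}; the bound \eqref{eq:percontrol} then follows by applying it to $v=x^{(1)}\otimes\cdots\otimes x^{(m)}$. Indeed, in that case $c_{\mathcal O}(v)=c_{\mathcal O}(x)$, and
\[
 m!\,\|P_{\sym}v\|_2^2=m!\,\|\Psi(x)\|_2^2=\per G(x).
\]

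To prove \eqref{eq:increment-projection}, I will decompose over orbits. The standard basis tensors $e_{\mathbf i}=e_{i_1}\otimes\cdots\otimes e_{i_m}$ are orthonormal, and $U_\sigma$ permutes them within each orbit. For an orbit $\mathcal O$ of cardinality $d_{\mathcal O}$, every $\mathbf i\in\mathcal O$ is fixed by exactly $m!/d_{\mathcal O}$ permutations. Therefore
\[
 P_{\sym}e_{\mathbf i}=\frac1{d_{\mathcal O}}\,f_{\mathcal O},
 \qquad f_{\mathcal O}:=\sum_{\mathbf j\in\mathcal O}e_{\mathbf j},
\]
and consequently
\[
 P_{\sym}v=\sum_{\mathcal O}\frac{c_{\mathcal O}(v)}{d_{\mathcal O}}\,f_{\mathcal O}.
\]
The vectors $f_{\mathcal O}$ have disjoint supports, so they are orthogonal with $\|f_{\mathcal O}\|_2^2=d_{\mathcal O}$. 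This yields
\[
 \|P_{\sym}v\|_2^2=\sum_{\mathcal O}\frac{|c_{\mathcal O}(v)|^2}{d_{\mathcal O}}.
\]
Finally, $d_{\mathcal O}\le m!$ because orbits of $S_m$ have size at most $m!$. Multiplying through by $m!$ gives \eqref{eq:increment-projection}.

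None of these steps is a real obstacle. The only point needing care is the stabilizer count, which justifies that $P_{\sym}e_{\mathbf i}$ equals the uniform average $f_{\mathcal O}/d_{\mathcal O}$ over the orbit.
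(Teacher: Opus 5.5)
Your proposal is correct and follows essentially the paper's argument. Both use that $P_{\sym}$ replaces each coefficient by the orbit average $c_{\mathcal O}/d_{\mathcal O}$, so that $\|P_{\sym}v\|_2^2=\sum_{\mathcal O}|c_{\mathcal O}(v)|^2/d_{\mathcal O}$, and then conclude with $d_{\mathcal O}\le m!$; the only differences are the order (general inequality first, then specializing) and obtaining the $\Phi$/$\Psi$ identities from \eqref{eq:abstract-permanent-identity} plus rescaling rather than computing $\langle P_{\sym}u,u\rangle$ directly for the unnormalized tensor.
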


\begin{proof}
First take
\[
 u:=x^{(1)}\otimes\cdots\otimes x^{(m)}.
\]
If an orbit $\mathcal O$ has cardinality $d_{\mathcal O}$, then every
coefficient of $P_{\sym}u$ on that orbit equals
$c_{\mathcal O}(x)/d_{\mathcal O}$.  Hence
\begin{equation}\label{eq:orbit-projection-coefficients}
 \|P_{\sym}u\|_2^2
 =
 \sum_{\mathcal O}
 \frac{|c_{\mathcal O}(x)|^2}{d_{\mathcal O}}
 \ge
 \frac1{m!}\sum_{\mathcal O}|c_{\mathcal O}(x)|^2.
\end{equation}
On the other hand,
\begin{align*}
 \|P_{\sym}u\|_2^2
 &=
 \langle P_{\sym}u,u\rangle\\
 &=
 \frac1{m!}\sum_{\sigma\in S_m}
 \prod_{r=1}^m
 \langle x^{(r)},x^{(\sigma(r))}\rangle
 =
 \frac1{m!}\per G(x).
\end{align*}
The identity $\|P_{\sym}u\|_2^2=\per G(x)/m!$ is the formula for
$\|\Psi(x)\|_2^2$ in \eqref{eq:permanent-identity}; together with
\eqref{eq:orbit-projection-coefficients}, it also proves
\eqref{eq:percontrol}.  Applying the same computation to the normalized product
$y_1\otimes\cdots\otimes y_m$ gives the formula for
$\|\Phi(x)\|_2^2$.

For an arbitrary $v\in(\mathbb R^n)^{\otimes_2 m}$, the coefficient of $P_{\sym}v$ on an orbit
of cardinality $d_{\mathcal O}$ is again
$c_{\mathcal O}(v)/d_{\mathcal O}$.  Thus
\[
 \|P_{\sym}v\|_2^2
 =
 \sum_{\mathcal O}
 \frac{|c_{\mathcal O}(v)|^2}{d_{\mathcal O}}
 \ge
 \frac1{m!}\sum_{\mathcal O}|c_{\mathcal O}(v)|^2,
\]
which is \eqref{eq:increment-projection}.
\end{proof}

Define
\begin{equation}\label{eq:alpha}
 \alpha:\mathfrak X_{m,n}\longrightarrow(0,1],
 \qquad
 \alpha(x):=
 \frac{\per R(x)}{m!}
 =
 \|\Phi(x)\|_2^2.
\end{equation}
The positivity argument following \eqref{eq:abstract-permanent-identity}
shows that $\alpha(x)>0$, while orthogonal projection gives
$\alpha(x)\le1$.  Define the permanent loss by
\begin{equation}\label{eq:permanent-loss}
 L:\mathfrak X_{m,n}\longrightarrow[0,+\infty),
 \qquad
 L(x):=-\log\alpha(x).
\end{equation}
Thus Lemma~\ref{lem:permanent-control} gives the explicit $L^2(\mu)$ bound
\[
 \left(\int |A_{m,n}^{\varepsilon}(x)|^2\,d\mu(\varepsilon)\right)^{1/2}
 \le \sqrt{m!}\,n^{m/2}e^{-L(x)/2}.
\]
This is the gain used in the permanent tail.

For $u,v\in\{-1,1\}^n$, define the Hamming distance by
\[
 \distH(u,v):=\bigl|\{i\in\{1,\ldots,n\}:u_i\ne v_i\}\bigr|.
\]
The next lemma turns a large permanent into Hamming rigidity.  If each
$x^{(j)}$ is replaced by $\delta_jx^{(j)}$, every orbit coefficient is
multiplied by the same number $\prod_j\delta_j$.  The absolute value of the
orbit sum is therefore unchanged, and the natural localization is around a
signed anchor.

\begin{lemma}
\label{lem:hamming}
For $x\in\mathfrak X_{m,n}$, set $L=L(x)$.  Then there are
$r\in\{1,\ldots,m\}$ and signs $\delta_j\in\{-1,1\}$, with
$\delta_r=1$, such that
\begin{equation}\label{eq:hamming}
 \sum_{j\ne r}
 \distH\bigl(x^{(j)},\delta_jx^{(r)}\bigr)
 \le nL.
\end{equation}
\end{lemma}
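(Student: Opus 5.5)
Apply Theorem~\ref{thm:abstract-permanent-rigidity} to the unit vectors $y_j=n^{-1/2}x^{(j)}\in S^{n-1}$. By \eqref{eq:alpha} and \eqref{eq:permanent-loss}, the quantity $\alpha$ in that theorem is exactly $\alpha(x)=\per R(x)/m!$, and its $L$ is the permanent loss $L(x)$. The theorem then gives an index $r:=r_0$ and signs $\delta_j:=\omega_j\in\{-1,1\}$, with $\delta_r=1$, such that
\[
 \sum_{j=1}^m\|\delta_jy_j-y_r\|_2^2\le 4L.
\]

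Next, translate Euclidean distance on normalized cube vectors into Hamming distance. For $u,w\in\{-1,1\}^n$ we have $\|u-w\|_2^2=4\,\distH(u,w)$, because each disagreeing coordinate contributes $2^2$. Hence
\[
 \|\delta_jy_j-y_r\|_2^2=\frac4n\,\distH(\delta_jx^{(j)},x^{(r)})=\frac4n\,\distH(x^{(j)},\delta_jx^{(r)}),
\]
where the last equality uses that multiplying both vectors by $\delta_j$ preserves Hamming distance. Summing over $j\ne r$ (the term $j=r$ vanishes) gives $\frac4n\sum_{j\ne r}\distH(x^{(j)},\delta_jx^{(r)})\le4L$, which is \eqref{eq:hamming}.

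Nothing here is a genuine obstacle. The only points to check are the factor $4$ on each side, which cancels exactly, and that the rigidity theorem's phases are real signs in this setting, which the theorem states explicitly for real Hilbert spaces.
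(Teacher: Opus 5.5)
Your proposal is correct and follows the paper's proof: you apply Theorem~\ref{thm:abstract-permanent-rigidity} to $y_j=n^{-1/2}x^{(j)}$, rewrite $\|\delta_jy_j-y_r\|_2^2$ as $\frac4n\distH(x^{(j)},\delta_jx^{(r)})$, and cancel the factor $4$. This term equals the paper's $\|y_j-\delta_jy_r\|_2^2$ because $\delta_j=\pm1$, and you also match the theorem's $\alpha$ and $L$ with $\alpha(x)$ and $L(x)$ and note that the phases are real signs.
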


\begin{proof}
Apply Theorem~\ref{thm:abstract-permanent-rigidity} to
$y_j=n^{-1/2}x^{(j)}$.  It supplies an index $r$ and signs $\delta_j$,
with $\delta_r=1$, for which
\[
 \sum_{j=1}^m\|y_j-\delta_jy_r\|_2^2\le4L.
\]
Since
\[
 \|y_j-\delta_jy_r\|_2^2
 =\frac4n\distH(x^{(j)},\delta_jx^{(r)}),
\]
the estimate furnished by Theorem~\ref{thm:abstract-permanent-rigidity} is exactly
\eqref{eq:hamming} after multiplication by $n/4$.
\end{proof}

\section{Entropy of permanent strata}

For each integer $k\ge0$, let
\begin{equation}\label{eq:permanent-stratum}
 \mathcal X_k
 :=
 \left\{
 x\in(\{-1,1\}^n)^m:
 e^{-(k+1)}<\alpha(x)\le e^{-k}
 \right\}.
\end{equation}
We use the binary entropy function
\[
 h:[0,1]\longrightarrow\R,
 \qquad
 h(u):=-u\log u-(1-u)\log(1-u),
\]
with $0\log0=0$.

\begin{lemma}\label{lem:hamming-ball}
Let $N\ge1$ and $0\le q\le N/2$ be integers.  Then
\begin{equation}\label{eq:hamming-ball}
 \sum_{j=0}^{q}\binom Nj
 \le
 \exp\left\{Nh\left(\frac qN\right)\right\}.
\end{equation}
\end{lemma}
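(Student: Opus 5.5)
The plan is the standard Chernoff-type argument. The case $q=0$ is trivial, since both sides equal $1$ when $h(0)=0$. So assume $1\le q\le N/2$ and set $p:=q/N\in(0,1/2]$.

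By the binomial theorem,
\[
 1=(p+(1-p))^N\ge\sum_{j=0}^{q}\binom Nj p^j(1-p)^{N-j}.
\]
Since $p\le1/2$, the ratio $p/(1-p)$ is at most $1$. The factor $p^j(1-p)^{N-j}=(1-p)^N\bigl(p/(1-p)\bigr)^j$ is therefore nonincreasing in $j$. For $j\le q$ it is consequently at least $p^q(1-p)^{N-q}$. This gives
\[
 \sum_{j=0}^{q}\binom Nj\le p^{-q}(1-p)^{-(N-q)}.
\]

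It remains to identify the right-hand side. Taking logarithms,
\[
 -q\log p-(N-q)\log(1-p)=N\bigl(-p\log p-(1-p)\log(1-p)\bigr)=Nh(q/N),
\]
which is \eqref{eq:hamming-ball}.

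There is no real obstacle here. The only point needing care is the monotonicity step, which is exactly where the hypothesis $q\le N/2$ enters, together with the separate treatment of the endpoint $q=0$, where $\log p$ is undefined.
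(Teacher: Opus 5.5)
Your argument is correct and is essentially the paper's proof. The paper bounds $s^q\sum_{j\le q}\binom Nj\le(1+s)^N$ with $s=u/(1-u)$, which is your inequality $1\ge\sum_{j\le q}\binom Nj p^j(1-p)^{N-j}$ multiplied through by $(1-p)^{-N}$; the only cosmetic difference is that the paper treats $q=N/2$ separately, whereas your monotonicity step (with ratio $p/(1-p)=1$) covers that case directly.
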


\begin{proof}
If $q=0$, both sides of \eqref{eq:hamming-ball} are equal to $1$.  If
$q=N/2$, the left-hand side is at most $2^N$, while
$h(1/2)=\log 2$, so the same inequality holds.  It remains to consider
$0<q/N<1/2$.  Put
\[
 u:=\frac qN,
 \qquad
 s:=\frac{u}{1-u}\in(0,1).
\]
Since $s^j\ge s^q$ for $j\le q$,
\[
 s^q\sum_{j=0}^{q}\binom Nj
 \le
 \sum_{j=0}^{N}\binom Nj s^j
 =
 (1+s)^N.
\]
Substitution of $s=u/(1-u)$ gives \eqref{eq:hamming-ball}.
\end{proof}

\begin{lemma}\label{lem:count}
If $k+1\le(m-1)/2$, then
\begin{equation}\label{eq:count}
 |\mathcal X_k|
 \le
 m\,2^{n+m-1}
 \exp\left\{
 n(m-1)
 h\left(\frac{k+1}{m-1}\right)
 \right\}.
\end{equation}
For every $k$,
\begin{equation}\label{eq:trivial-stratum-count}
 |\mathcal X_k|\le2^{mn}.
\end{equation}
\end{lemma}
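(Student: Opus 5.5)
The plan is to bound $|\mathcal X_k|$ by counting configurations that satisfy the Hamming localization of Lemma~\ref{lem:hamming}. The trivial bound \eqref{eq:trivial-stratum-count} needs no work, since $|(\{-1,1\}^n)^m|=2^{mn}$.

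For \eqref{eq:count}, take $x\in\mathcal X_k$. Then $\alpha(x)>e^{-(k+1)}$, so $L(x)<k+1$. Lemma~\ref{lem:hamming} supplies an index $r$ and signs $\delta_j$ with $\delta_r=1$ such that
\[
 \sum_{j\ne r}\distH\bigl(x^{(j)},\delta_jx^{(r)}\bigr)\le n(k+1).
\]
Consider the map sending $x$ to the data $(r,x^{(r)},(\delta_j)_{j\ne r},F)$. Here $F$ is the set of pairs $(j,i)\in(\{1,\ldots,m\}\setminus\{r\})\times\{1,\ldots,n\}$ with $x^{(j)}_i\ne\delta_jx^{(r)}_i$. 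This map is injective, because $x$ is recovered by setting $x^{(j)}_i=\delta_jx^{(r)}_i$ off $F$ and flipping that value on $F$.

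It remains to count the data:
\begin{enumerate}[label=\textup{(\alph*)}]
\item there are $m$ choices of $r$;
\item there are $2^n$ choices of $x^{(r)}$;
\item there are $2^{m-1}$ choices of the signs $(\delta_j)_{j\ne r}$;
\item $F$ is a subset of a set of size $N=n(m-1)$, with $|F|\le q:=\lfloor n(k+1)\rfloor$.
\end{enumerate}
The hypothesis $k+1\le(m-1)/2$ gives $q\le N/2$, so Lemma~\ref{lem:hamming-ball} applies. It bounds the number of such $F$ by $\exp\{Nh(q/N)\}$.

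Since $h$ is increasing on $[0,1/2]$ and
\[
 \frac qN\le\frac{k+1}{m-1}\le\frac12,
\]
this is at most $\exp\{n(m-1)h((k+1)/(m-1))\}$. Multiplying the four counts gives exactly \eqref{eq:count}.

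There is no real obstacle. The only points to check are that the strict inequality $L<k+1$ lets $q$ be taken as an integer at most $n(k+1)$, and that the monotonicity of $h$ is used only on $[0,1/2]$, where it holds.
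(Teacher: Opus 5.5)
Your proposal is correct and follows essentially the same route as the paper: encode each configuration by the anchor index, anchor vector, the $m-1$ signs and the disagreement set, then apply Lemma~\ref{lem:hamming-ball}. One simplification: $n(k+1)$ is already an integer, so you may take $q=n(k+1)$ with $q/N=(k+1)/(m-1)$ exactly, which makes the floor and the monotonicity of $h$ unnecessary (though harmless).
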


\begin{proof}
For $x\in\mathcal X_k$, Lemma~\ref{lem:hamming} supplies an anchor index,
an anchor vector, and $m-1$ signs such that the total number of
disagreements is at most $n(k+1)$.  There are at most
$m2^n2^{m-1}$ choices for these data.  Once they are fixed, the
disagreement pattern is a subset of a set with $n(m-1)$ elements and
cardinality at most $n(k+1)$.  Because the coordinates take only the values
$-1$ and $1$, this pattern determines all the remaining vectors.  Hence
\[
 |\mathcal X_k|
 \le
 m2^{n+m-1}
 \sum_{q\le n(k+1)}
 \binom{n(m-1)}q.
\]
When $(k+1)/(m-1)\le1/2$, Lemma~\ref{lem:hamming-ball} gives
\eqref{eq:count}.  For \eqref{eq:trivial-stratum-count}, no localization is used: the full
product cube $(\{-1,1\}^n)^m$ has exactly $2^{mn}$ elements, and
$\mathcal X_k$ is a subset of it.
\end{proof}

We repeatedly use
\begin{equation}\label{eq:entropy-simple}
 h(u)\le u\log\frac eu,
 \qquad 0<u\le\frac12.
\end{equation}

\section{Gaussian-width estimates and the permanent tail}
\label{sec:technical-toolkit}

We collect the integral estimates needed in the upper bound.  For a
nonempty bounded subset $S$ of a finite-dimensional real Hilbert space, its
absolute Gaussian width is
\begin{equation}\label{eq:gaussian-width-definition}
 w(S):=\int_{\operatorname{span}S} \sup_{s\in S}|\langle g,s\rangle|\,d\gamma_{\operatorname{span}S}(g).
\end{equation}
Equivalently, the integration is against the standard Gaussian measure on
$\operatorname{span}S$.
For a tensor $u\in H_1\otimes\cdots\otimes H_\ell$, define the injective norm
by
\begin{equation}\label{eq:injective-norm}
 \|u\|_\varepsilon
 :=
 \sup_{\|x_j\|_2\le1}
 |\langle u,x_1\otimes\cdots\otimes x_\ell\rangle|.
\end{equation}
The projective norm $\|\cdot\|_\pi$ was defined in
\eqref{eq:projective-norm-definition}.
\begingroup
The injective and projective norms satisfy the following dual estimate directly from their definitions.  Indeed, if
$v=\sum_\nu x_{1,\nu}\otimes\cdots\otimes x_{\ell,\nu}$, then
\[
 |\langle u,v\rangle|
 \le \|u\|_\varepsilon
     \sum_\nu\prod_{j=1}^\ell\|x_{j,\nu}\|_2.
\]
Taking the infimum over all such representations of $v$ gives
\[
 |\langle u,v\rangle|
 \le \|u\|_\varepsilon\|v\|_\pi.
\]
\endgroup
We use the same notation for their restrictions to symmetric tensor powers.

\begin{lemma}\label{lem:subgaussian-max}
Let $(\Omega,\nu)$ be a measure space with $\nu(\Omega)=1$ and let $Z_1,\ldots,Z_M:\Omega\to\mathbb R$ be measurable functions such that, for every
$\lambda\in\mathbb R$,
\[
 \int e^{\lambda Z_j}\,d\nu\le e^{\lambda^2\sigma^2/2}
 \qquad(1\le j\le M).
\]
Then
\[
 \int \max_{1\le j\le M}|Z_j|\,d\nu
 \le\sigma\sqrt{2\log(2M)}.
\]
\end{lemma}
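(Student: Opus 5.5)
The plan is the standard exponential-moment argument: Jensen's inequality first, then a union bound inside the logarithm, then an optimization in the free parameter. Since the statement involves $|Z_j|$, we first symmetrize the family. Consider the $2M$ functions $\pm Z_1,\ldots,\pm Z_M$. Each satisfies the same moment generating bound, because the hypothesis holds for every $\lambda\in\mathbb R$, so replacing $\lambda$ by $-\lambda$ covers $-Z_j$. Moreover,
\[
\max_j|Z_j|=\max_{j,\pm}(\pm Z_j).
\]
Write $W$ for this maximum of $2M$ functions.

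Fix $\lambda>0$. Since $\nu$ is a probability measure and $\exp$ is convex, Jensen's inequality gives
\[
\exp\Bigl(\lambda\int W\,d\nu\Bigr)\le\int e^{\lambda W}\,d\nu .
\]
Bound the exponential of the maximum by the sum of the exponentials:
\[
e^{\lambda W}\le\sum_{j,\pm}e^{\pm\lambda Z_j}.
\]
Integrating term by term and using the hypothesis yields
\[
\int e^{\lambda W}\,d\nu\le 2M e^{\lambda^2\sigma^2/2}.
\]
Taking logarithms and dividing by $\lambda$ then gives
\[
\int W\,d\nu\le \frac{\log(2M)}{\lambda}+\frac{\lambda\sigma^2}{2}.
\]

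Optimize at $\lambda=\sqrt{2\log(2M)}/\sigma$. This choice is admissible because $M\ge1$ gives $\log(2M)>0$. With it both terms equal $\sigma\sqrt{\log(2M)/2}$, so their sum is $\sigma\sqrt{2\log(2M)}$, which is the claim. If $\sigma=0$, the hypothesis forces $Z_j=0$ almost everywhere: letting $\lambda\to\pm\infty$ shows that $\nu(Z_j>\epsilon)=0$ and $\nu(Z_j<-\epsilon)=0$ for every $\epsilon>0$. In that case the bound is trivial.

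There are no real obstacles here. The only points needing care are the measurability and integrability of $W$, which hold because $W$ is a finite maximum of measurable functions and is nonnegative, so Jensen's inequality applies in the extended sense, and the degenerate case $\sigma=0$.
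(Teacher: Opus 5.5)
Your proof is correct and is essentially the paper's argument: Jensen's inequality, the pointwise bound $e^{\lambda\max_j|Z_j|}\le\sum_j(e^{\lambda Z_j}+e^{-\lambda Z_j})$ giving $2Me^{\lambda^2\sigma^2/2}$, and the choice $\lambda=\sqrt{2\log(2M)}/\sigma$, with $\sigma=0$ handled separately. One small addition: $W$ is automatically integrable, since $W\le\lambda^{-1}e^{\lambda W}$ and the right-hand side has finite integral, so Jensen applies without any appeal to extended values.
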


\begin{proof}
For $\lambda>0$, Jensen's inequality and the pointwise bound
$e^{\lambda\max_j|Z_j|}\le\sum_j(e^{\lambda Z_j}+e^{-\lambda Z_j})$ give
\begin{align*}
 \exp\!\left\{\lambda\int\max_j|Z_j|\,d\nu\right\}
 &\le \int e^{\lambda\max_j|Z_j|}\,d\nu\\
 &\le\sum_{j=1}^M\bigl(\int e^{\lambda Z_j}\,d\nu
                       +\int e^{-\lambda Z_j}\,d\nu\bigr)\\
 &\le2M e^{\lambda^2\sigma^2/2}.
\end{align*}
Taking logarithms and optimizing in $\lambda$ proves the claim; the case
$\sigma=0$ is immediate.
\end{proof}

A centered Gaussian linear functional with second moment at most $\sigma^2$ satisfies the hypothesis.  The same is true for a Rademacher sum $Z=\sum_r\varepsilon_ra_r$ with
$\sum_ra_r^2\le\sigma^2$, by $\cosh u\le e^{u^2/2}$.

\begin{lemma}\label{lem:gaussian-tensor}
Let $\ell,d$ be positive integers and set
\[
 H_{\ell,d}:=\operatorname{Sym}^{\ell}(\mathbb R^d)
 \subset (\mathbb R^d)^{\otimes\ell}
\]
be endowed with the Hilbertian structure inherited from the full tensor
product.  Let $G$ be a standard Gaussian vector in $H_{\ell,d}$, that is,
if $(e_\nu)_\nu$ is any orthonormal basis of $H_{\ell,d}$ and
$(g_\nu)_\nu$ are independent standard real Gaussian variables, then
\[
 G=\sum_\nu g_\nu e_\nu.
\]
For every $\ell\ge1$,
\begin{equation}\label{eq:symmetric-gaussian-tensor}
 \int_{H_{\ell,d}} \|G\|_\varepsilon\,d\gamma_{H_{\ell,d}}(G)
 \le
 K\sqrt{d\log(e\ell)},
\end{equation}
where $K>0$ is an absolute constant; for instance, one may take $K=6$.
\end{lemma}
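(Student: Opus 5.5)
Writing $\|G\|_\varepsilon$ as a supremum of a Gaussian process and applying Dudley's entropy integral would give $\sqrt{d\ell}$ or $\sqrt{d\log d}$ rather than $\sqrt{d\log(e\ell)}$. I will therefore use a single-net argument with a polarization step. Since $G$ is symmetric, the symmetric injective norm equals the polynomial norm up to polarization, but the polarization constant $\ell^\ell/\ell!\sim e^\ell$ is far too large. I will instead bound the full multilinear injective norm directly, comparing it with the restriction to a net.

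First I identify the process. For unit vectors $x_1,\dots,x_\ell$ we have
\[
 \langle G,x_1\otimes\cdots\otimes x_\ell\rangle=\langle G,P_{\sym}(x_1\otimes\cdots\otimes x_\ell)\rangle,
\]
so this is a centered Gaussian with variance $\|P_{\sym}(x_1\otimes\cdots\otimes x_\ell)\|_2^2\le1$, and by the permanent identity \eqref{eq:abstract-permanent-identity} it equals $\per R/\ell!$. Next take a $1/2$-net $\mathcal N$ of $S^{d-1}$ with $|\mathcal N|\le5^d$. The multilinear net argument costs a factor $(1-1/2)^{-\ell}$ (or $(1-\ell\theta)^{-1}$ with $\theta\sim1/\ell$), which is useless. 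The naive cardinality $|\mathcal N|^\ell$ is also too large, since it yields $\sqrt{d\ell}$.

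The key gain comes from working with the symmetric polynomial norm $\sup_{\|x\|\le1}|\langle G,x^{\otimes\ell}\rangle|$. Here the net has size $5^d$, independent of $\ell$. The difficulty is the approximation step: $x^{\otimes\ell}-u^{\otimes\ell}$ with $\|x-u\|\le\theta$ has Lipschitz constant of order $\ell$. I therefore take a $\theta$-net with $\theta=1/(2\ell)$, of size $(3\ell)^d$. Then
\[
 \|x^{\otimes\ell}-u^{\otimes\ell}\|\le\ell\theta\le\tfrac12,
\]
and the symmetric injective norm of the difference is at most $\tfrac12\|G\|_{\mathrm{poly}}$. Absorbing this gives $\|G\|_{\mathrm{poly}}\le2\max_{u}|\langle G,u^{\otimes\ell}\rangle|$. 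By Lemma~\ref{lem:subgaussian-max} the expectation of this maximum is at most
\[
 2\sqrt{2\log\bigl(2(3\ell)^d\bigr)}\lesssim\sqrt{d\log(e\ell)},
\]
which is exactly the claimed scale.

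The remaining and main obstacle is passing from the polynomial norm back to the multilinear norm $\|G\|_\varepsilon$ of \eqref{eq:injective-norm} without the $e^\ell$ polarization loss. In a real Hilbert space, Banach's theorem (Kellogg) states that for symmetric tensors the injective norm equals the supremum over diagonal inputs, $\|u\|_\varepsilon=\sup_{\|x\|\le1}|\langle u,x^{\otimes\ell}\rangle|$. I would invoke this classical fact, so that no polarization constant appears at all. The bound then follows with an explicit $K$ after tracking the constants $2\sqrt2\sqrt{\log2+d\log(3\ell)}\le6\sqrt{d\log(e\ell)}$, using $\log(3\ell)\le2\log(e\ell)$ and $d\ge1$.
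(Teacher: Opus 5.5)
Your argument is correct and is essentially the paper's proof. Both use a $1/(2\ell)$-net of $S^{d-1}$, the telescoping bound $|\langle G,x^{\otimes\ell}-u^{\otimes\ell}\rangle|\le \ell\|x-u\|_2\|G\|_\varepsilon$ absorbed via Banach's diagonal principle $\|G\|_\varepsilon=\sup_{\|x\|_2\le1}|\langle G,x^{\otimes\ell}\rangle|$, and then Lemma~\ref{lem:subgaussian-max} over the net. Two small corrections: Banach's theorem is already needed in the absorption step, since the telescoping bound involves the multilinear norm $\|G\|_\varepsilon$ rather than only entering at the end. Also, the volumetric bound for a $1/(2\ell)$-net is $(1+4\ell)^d$ rather than $(3\ell)^d$, though this still gives $K=6$.
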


\begin{proof}
For symmetric tensors, Banach\'s diagonal principle reduces the injective norm to diagonal values.  For a continuous
$\ell$-linear form $T$, write
\[
 \|T\|_{\mathrm{op}}
 :=
 \sup\left\{
 |T(x_1,\ldots,x_\ell)|:
 \|x_r\|_2\le1,\ 1\le r\le\ell
 \right\}.
\]
If $T$ is symmetric, Banach's diagonal principle gives
\begin{equation}\label{eq:banach-diagonal}
 \|T\|_{\mathrm{op}}
 =
 \sup_{\|u\|_2\le1}|T(u,\ldots,u)|.
\end{equation}
Under the usual identification of a symmetric tensor with its associated
multilinear form, $\|T\|_{\mathrm{op}}=\|T\|_\varepsilon$.  The diagonal
identity~\eqref{eq:banach-diagonal} is Banach's theorem; see
\cite[Satz~I, \S~5]{Banach} and, in modern tensor notation,
\cite[Lemma~1(6), Eq.~(7)]{FriedlandWang}.

Fix a $1/(2\ell)$-net $\mathcal N$ of the Euclidean unit sphere
$S^{d-1}$.  We may choose it so that
\begin{equation}\label{eq:net-cardinality}
 |\mathcal N|
 \le
 (1+4\ell)^d.
\end{equation}
Indeed, put $\varepsilon=1/(2\ell)$ and take $\mathcal N$ to be a
maximal $\varepsilon$-separated subset of $S^{d-1}$.  Maximality makes
$\mathcal N$ an $\varepsilon$-net.  Moreover, the Euclidean balls of radius
$\varepsilon/2$ centered at the points of $\mathcal N$ are pairwise disjoint
and contained in the ball of radius $1+\varepsilon/2$.  Comparing volumes
gives
\[
 |\mathcal N|\left(\frac{\varepsilon}{2}\right)^d
 \le \left(1+\frac{\varepsilon}{2}\right)^d,
\]
and hence $|\mathcal N|\le(1+2/\varepsilon)^d=(1+4\ell)^d$.
Let $u\in S^{d-1}$ and choose $v\in\mathcal N$ with
\[
 \|u-v\|_2\le\frac1{2\ell}.
\]
By multilinearity,
\begin{align*}
 T(u,\ldots,u)-T(v,\ldots,v)
 &=
 \sum_{r=1}^{\ell}
 T(v,\ldots,v,
 \underbrace{u-v}_{r\text{-th place}},
 u,\ldots,u).
\end{align*}
All the vectors other than $u-v$ have Euclidean norm one.  By the
definition of the full operator norm,
\[
 |T(v,\ldots,v,u-v,u,\ldots,u)|
 \le
 \|T\|_{\mathrm{op}}\|u-v\|_2.
\]
Summing the $\ell$ terms and then using
\eqref{eq:banach-diagonal},
\[
 |T(u,\ldots,u)-T(v,\ldots,v)|
 \le
 \ell\|u-v\|_2\|T\|_{\mathrm{op}}
 \le
 \frac12\|T\|_{\mathrm{op}}.
\]
Taking the supremum over $u$ and using Banach's diagonal identity once
more gives
\begin{equation}\label{eq:net-reduction}
 \|T\|_\varepsilon
 =
 \|T\|_{\mathrm{op}}
 \le
 2\max_{v\in\mathcal N}|T(v,\ldots,v)|.
\end{equation}

Apply this to the Gaussian vector $G\in H_{\ell,d}$.  For $v\in S^{d-1}$,
the symmetric rank-one tensor $v^{\otimes\ell}$ belongs to $H_{\ell,d}$ and
\[
 \|v^{\otimes\ell}\|_2
 =
 \|v\|_2^\ell
 =
 1.
\]
Since $\gamma_{H_{\ell,d}}$ is the standard Gaussian measure on $H_{\ell,d}$,
\[
 \langle G,v^{\otimes\ell}\rangle
\]
is a centered Gaussian linear functional and
\[
 \int_{H_{\ell,d}} |\langle G,v^{\otimes\ell}\rangle|^2\,d\gamma_{H_{\ell,d}}(G)
 =
 \|v^{\otimes\ell}\|_2^2
 =
 1.
\]
No independence among the linear functionals indexed by $v$ is needed.
Lemma~\ref{lem:subgaussian-max} applies to their individual Gaussian moment bounds.  From~\eqref{eq:net-reduction},
\[
 \int_{H_{\ell,d}} \|G\|_\varepsilon\,d\gamma_{H_{\ell,d}}(G)
 \le
 2\int_{H_{\ell,d}}
 \max_{v\in\mathcal N}|\langle G,v^{\otimes\ell}\rangle|\,d\gamma_{H_{\ell,d}}(G)
 \le
 2\sqrt{2\log(2|\mathcal N|)}.
\]
Using~\eqref{eq:net-cardinality},
\[
 \log(2|\mathcal N|)
 \le
 \log2+d\log(1+4\ell)
 \le
 \frac92 d\log(e\ell).
\]
Thus $2\sqrt{2\log(2|\mathcal N|)}\le6\sqrt{d\log(e\ell)}$,
which gives~\eqref{eq:symmetric-gaussian-tensor} with $K=6$.
\end{proof}

\begin{lemma}\label{lem:increment-comparison}
Let $x,y\in\mathfrak X_{m,n}$ and let
$\eta,\theta\in\{-1,1\}$.  Then
\[
 \|\eta c(x)-\theta c(y)\|_2
 \le
 \sqrt{m!}\,
 \|\eta\Psi(x)-\theta\Psi(y)\|_2.
\]
\end{lemma}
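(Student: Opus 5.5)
The plan is to reduce the statement to the general estimate \eqref{eq:increment-projection} of Lemma~\ref{lem:permanent-control}, applied to a single tensor that encodes both configurations. The key observation is that the orbit-coefficient map $v\mapsto c(v)$ is linear on $(\R^n)^{\otimes_2 m}$, because each $c_{\mathcal O}(v)$ is a sum of entries of $v$. The same is true of $P_{\sym}$.

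First, set
\[
 u_x:=x^{(1)}\otimes\cdots\otimes x^{(m)},
 \qquad
 u_y:=y^{(1)}\otimes\cdots\otimes y^{(m)},
\]
where $y^{(r)}$ denotes the components of $y\in\mathfrak X_{m,n}$, not the normalized vectors. Comparing \eqref{eq:orbit-coefficient} with the general definition of $c_{\mathcal O}(v)$, we have $c(x)=c(u_x)$ and $c(y)=c(u_y)$. Likewise $\Psi(x)=P_{\sym}u_x$ and $\Psi(y)=P_{\sym}u_y$ by \eqref{eq:Psi-definition}.

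Next, form $v:=\eta u_x-\theta u_y$. By linearity,
\[
 c(v)=\eta c(x)-\theta c(y),
 \qquad
 P_{\sym}v=\eta\Psi(x)-\theta\Psi(y).
\]
Inequality \eqref{eq:increment-projection} then gives
\[
 \|\eta c(x)-\theta c(y)\|_2^2\le m!\,\|\eta\Psi(x)-\theta\Psi(y)\|_2^2,
\]
and taking square roots yields the claim.

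There is no real obstacle. The only point needing care is confirming that \eqref{eq:increment-projection} was proved for arbitrary $v$, not only rank-one tensors. It was: the argument uses only that $P_{\sym}v$ has the constant value $c_{\mathcal O}(v)/d_{\mathcal O}$ on each orbit, together with $d_{\mathcal O}\le m!$.
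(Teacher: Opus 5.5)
Your proposal is correct and is essentially the paper's own proof. You form the single tensor $v=\eta u_x-\theta u_y$, use the linearity of $c(\cdot)$ and of $P_{\sym}$, and apply \eqref{eq:increment-projection}, which, as you correctly note, was proved for arbitrary $v$.
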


\begin{proof}
Put
\[
 u_x:=x^{(1)}\otimes\cdots\otimes x^{(m)},
 \qquad
 u_y:=y^{(1)}\otimes\cdots\otimes y^{(m)},
\]
and consider the tensor
\[
 v:=\eta u_x-\theta u_y.
\]
The map which sends a tensor to its vector of orbit sums is linear.  Hence
\[
 c(v)=\eta c(x)-\theta c(y).
\]
Likewise, $P_{\sym}$ is linear, and therefore
\[
 P_{\sym}v
 =
 \eta P_{\sym}u_x-\theta P_{\sym}u_y
 =
 \eta\Psi(x)-\theta\Psi(y).
\]
Applying~\eqref{eq:increment-projection} directly to this tensor $v$ gives
\[
 \|\eta c(x)-\theta c(y)\|_2^2
 \le
 m!\,\|\eta\Psi(x)-\theta\Psi(y)\|_2^2.
\]
Taking square roots proves the lemma.
\end{proof}

\begin{lemma}\label{lem:comparison-principles}
Let $\Gamma$ be a nonempty finite set, let $H_1,H_2$ be finite-dimensional real Hilbert spaces, and let $\lambda\ge0$.

\begin{enumerate}[label=\textup{(\roman*)}]
\item Suppose that maps $a:\Gamma\to H_1$ and $b:\Gamma\to H_2$ satisfy
\[
 \|a(s)-a(t)\|_{H_1}
 \le \lambda\|b(s)-b(t)\|_{H_2}
 \qquad(s,t\in\Gamma).
\]
Then the Gaussian comparison theorem
\cite[Theorem~1, Eq.~(3)]{Vitale} gives
\[
 \int_{H_1} \sup_{s\in\Gamma}\langle g_1,a(s)\rangle\,d\gamma_{H_1}(g_1)
 \le
 \lambda\int_{H_2} \sup_{s\in\Gamma}\langle g_2,b(s)\rangle\,d\gamma_{H_2}(g_2).
\]

\item If $V\subset\mathbb R^N$ is finite and nonempty, then
\begin{equation}\label{eq:rad-gauss-comparison}
 \int_{\{-1,1\}^N} \sup_{v\in V}|\langle\varepsilon,v\rangle|\,d\mu_{\{1,\ldots,N\}}(\varepsilon)
 \le
 \sqrt{\frac\pi2}\,
 \int_{\mathbb R^N} \sup_{v\in V}|\langle g,v\rangle|\,d\gamma_{\mathbb R^N}(g).
\end{equation}
\end{enumerate}
\end{lemma}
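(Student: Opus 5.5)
Part (i) is an application of the cited Sudakov--Fernique/Vitale comparison, and the only real work is putting it in the form needed. Assume first that $\operatorname{span}$ issues are harmless: extend $a$ and $b$ to Gaussian processes $X_s:=\langle g_1,a(s)\rangle$ and $Y_s:=\lambda\langle g_2,b(s)\rangle$ indexed by the finite set $\Gamma$. Both are centered, and their increments satisfy
\[
 \int|X_s-X_t|^2\,d\gamma_{H_1}=\|a(s)-a(t)\|_{H_1}^2
 \le\lambda^2\|b(s)-b(t)\|_{H_2}^2=\int|Y_s-Y_t|^2\,d\gamma_{H_2}.
\]
The Sudakov--Fernique inequality, in the form of \cite[Theorem~1, Eq.~(3)]{Vitale}, requires only comparison of increment variances, with no equality of variances. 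It therefore gives $\mathbb E\sup_sX_s\le\mathbb E\sup_sY_s$, which is the claim once we pull out $\lambda$. Three details need checking. First, $\Gamma$ is finite, so the suprema are maxima of integrable variables. Second, the processes live on different probability spaces, and Vitale's theorem only involves the laws. Third, $\lambda=0$ forces $a$ to be constant, so both sides are $0$, since $\mathbb E\langle g_1,a\rangle=0$.

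For (ii), I will use the standard contraction argument comparing Rademacher and Gaussian averages. Write $g=(\varepsilon_i|g_i|)_i$, where $\varepsilon$ is uniform on $\{-1,1\}^N$ and independent of $(|g_i|)_i$; this gives $g$ the same law as a standard Gaussian vector. Condition on $\varepsilon$ and take the expectation over the moduli inside the supremum. By Jensen's inequality, since $v\mapsto\sup_{v\in V}|\langle\cdot,v\rangle|$ is convex,
\[
 \mathbb E_g\sup_{v}|\langle g,v\rangle|
 =\mathbb E_\varepsilon\mathbb E_{|g|}\sup_v\Bigl|\sum_i\varepsilon_i|g_i|v_i\Bigr|
 \ge\mathbb E_\varepsilon\sup_v\Bigl|\sum_i\varepsilon_i\,\mathbb E|g_i|\,v_i\Bigr|.
\]
Since $\mathbb E|g_i|=\sqrt{2/\pi}$, rearranging gives \eqref{eq:rad-gauss-comparison}. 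Finiteness of $V$ makes every measurability point trivial.

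The only step with any subtlety is quoting the correct version of the comparison theorem in (i). Classical Slepian requires equal variances, whereas Sudakov--Fernique/Vitale needs only the increment inequality, and that is exactly our hypothesis. If the cited statement were formulated for processes on a common space, I would realize $X$ and $Y$ jointly on $H_1\times H_2$ with the product Gaussian measure. Nothing else in the statement changes.
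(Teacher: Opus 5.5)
Your proposal is correct and follows the paper's proof. Part (i) is the same direct appeal to the Sudakov--Fernique/Vitale comparison of increment variances. Part (ii) is the same decomposition $g=(\varepsilon_i|g_i|)_i$, followed by Jensen in the moduli for the convex map $r\mapsto\sup_{v\in V}\bigl|\sum_i\varepsilon_i r_i v_i\bigr|$, using $\int_{\mathbb R}|t|\,d\gamma_{\mathbb R}(t)=\sqrt{2/\pi}$.
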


\begin{proof}
Part~(i) is the usual Sudakov--Fernique theorem, stated here with the exact
metric hypothesis used later.  For part~(ii), write a standard Gaussian
vector coordinatewise as
\[
 g=(\varepsilon_1r_1,\ldots,\varepsilon_Nr_N),
\]
under the product decomposition of $\gamma_{\mathbb R^N}$ into signs and
absolute values.  The sign vector is distributed according to
$\mu_{\{1,\ldots,N\}}$, the absolute values are independent of the signs,
and
$\int_{\mathbb R}|t|\,d\gamma_{\mathbb R}(t)=\sqrt{2/\pi}$.  For fixed signs the map
\[
 (r_1,\ldots,r_N)\longmapsto
 \sup_{v\in V}\left|\sum_{j=1}^N\varepsilon_jr_jv_j\right|
\]
is convex.  Jensen's inequality in the variables $r_j$ gives
\[
 \int_{\mathbb R^N} \sup_{v\in V}|\langle g,v\rangle|\,d\gamma_{\mathbb R^N}(g)
 \ge
 \sqrt{\frac2\pi}\,
 \int_{\{-1,1\}^N} \sup_{v\in V}|\langle\varepsilon,v\rangle|\,d\mu_{\{1,\ldots,N\}}(\varepsilon),
\]
which is equivalent to~\eqref{eq:rad-gauss-comparison}.
\end{proof}

\begin{lemma}\label{lem:width-union}
Let $S_1,\ldots,S_M$ be nonempty bounded subsets of a finite-dimensional real
Hilbert space and assume
$\|s\|_2\le R_0$ for every $s\in\bigcup_jS_j$.  Then
\begin{equation}\label{eq:width-union}
 w\!\left(\bigcup_{j=1}^M S_j\right)
 \le
 \max_{1\le j\le M}w(S_j)
 +R_0\sqrt{2\log M}
\end{equation}
for $M\ge2$.  For $M=1$ the second term is omitted.
Empty sets may simply be omitted from the union.
\end{lemma}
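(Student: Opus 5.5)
We reduce the Gaussian width of the union to a maximum over finitely many Gaussian suprema and then apply Lemma~\ref{lem:subgaussian-max}. Work in a finite-dimensional space $H$ containing all the $S_j$; its span is $E:=\operatorname{span}\bigcup_jS_j$. The absolute width of a set $S$ computed in $\operatorname{span}S$ coincides with the width computed against the standard Gaussian of any larger finite-dimensional space $F\supset S$. Indeed, writing $g=g'+g''$ with $g'$ the orthogonal projection onto $\operatorname{span}S$, one has $\langle g,s\rangle=\langle g',s\rangle$, and $g'$ is standard Gaussian on $\operatorname{span}S$. So we may compute every width against $\gamma_E$. Set
\[
 X_j(g):=\sup_{s\in S_j}|\langle g,s\rangle|,
 \qquad
 \sup_{s\in\bigcup_j S_j}|\langle g,s\rangle|=\max_jX_j(g),
\]
and $w_j:=\int X_j\,d\gamma_E=w(S_j)$.

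The key step is the pointwise bound
\[
 \max_jX_j\le\max_jw_j+\max_j(X_j-w_j).
\]
This reduces the claim to showing
\[
 \int\max_j(X_j-w_j)\,d\gamma_E\le R_0\sqrt{2\log M}.
\]
Each $X_j$ is a supremum of linear functionals $g\mapsto\pm\langle g,s\rangle$ with $\|s\|_2\le R_0$. Hence $X_j$ is $R_0$-Lipschitz on $E$. The Gaussian concentration inequality (Tsirelson--Ibragimov--Sudakov, or the Maurey--Pisier form with the sharp constant) then gives
\[
 \int e^{\lambda(X_j-w_j)}\,d\gamma_E\le e^{\lambda^2R_0^2/2}
 \qquad(\lambda\in\mathbb R).
\]

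We cannot simply invoke Lemma~\ref{lem:subgaussian-max}, because it bounds $\max_j|Z_j|$ and would produce $\sqrt{2\log(2M)}$. Instead we repeat its proof one-sidedly. For $\lambda>0$, Jensen gives
\[
 \exp\Bigl\{\lambda\int\max_j(X_j-w_j)\Bigr\}
 \le\sum_j\int e^{\lambda(X_j-w_j)}
 \le Me^{\lambda^2R_0^2/2}.
\]
Taking logarithms and choosing $\lambda=\sqrt{2\log M}/R_0$ yields $R_0\sqrt{2\log M}$ for $M\ge2$. The case $R_0=0$ is trivial. When $M=1$ the reduction is an identity, so the second term is absent. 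Empty sets contribute nothing to the union and are dropped.

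The only nonelementary ingredient is the Gaussian concentration for Lipschitz functions with variance proxy exactly $R_0^2$. I would cite it rather than prove it; the sharp constant $1$ in the exponent is what the stated bound $R_0\sqrt{2\log M}$ requires. A minor technical point is measurability when the $S_j$ are infinite. Each $X_j$ is continuous, being Lipschitz, so it is measurable and this causes no difficulty.
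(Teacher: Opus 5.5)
Your proof is correct and follows the paper's argument essentially step for step. You compute the widths against the standard Gaussian on the span of the union, note that each $X_j$ is $R_0$-Lipschitz so that Gaussian concentration bounds its centered exponential moment by $e^{\lambda^2R_0^2/2}$, and close with the one-sided Jensen/union estimate at $\lambda=\sqrt{2\log M}/R_0$.

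One small attribution point: the Maurey--Pisier rotation argument only yields the exponent $\pi^2\lambda^2R_0^2/8$. The sharp constant you rightly say is needed should be credited to the Tsirelson--Ibragimov--Sudakov form of concentration, which is the sharp version the paper cites from Ledoux.
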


\begin{proof}
The case $M=1$ is immediate, so assume $M\ge2$.  If $R_0=0$, every $S_j$
is contained in $\{0\}$ and the assertion is immediate.  Assume $R_0>0$.
Let
\[
 H_0:=\operatorname{span}\left(\bigcup_{j=1}^M S_j\right),
\]
let $g$ be standard Gaussian in $H_0$, and set
\[
 F_j(g):=\sup_{s\in S_j}|\langle g,s\rangle|.
\]
The projection of $g$ onto $\operatorname{span}S_j$ is standard Gaussian
there, and hence $\int_{H_0} F_j(g)\,d\gamma_{H_0}(g)=w(S_j)$.
For all $g,h$,
\[
 |F_j(g)-F_j(h)|
 \le R_0\|g-h\|_2,
\]
so Gaussian concentration
\cite[Proposition~2.1, Eq.~(2.7)]{LedouxGaussian} gives
\[
 \int_{H_0}\exp\!\left\{\lambda\left(F_j(g)-\int_{H_0} F_j\,d\gamma_{H_0}\right)\right\}\,d\gamma_{H_0}(g)
 \le \exp(\lambda^2R_0^2/2)
 \qquad(\lambda\ge0).
\]
Therefore
\begin{align*}
 \int_{H_0}\max_{1\le j\le M}F_j(g)\,d\gamma_{H_0}(g)
 &\le
 \max_j\int_{H_0} F_j(g)\,d\gamma_{H_0}(g)
 +\frac{\log M}{\lambda}
 +\frac{\lambda R_0^2}{2}.
\end{align*}
Taking $\lambda=\sqrt{2\log M}/R_0$ proves the claim.
\end{proof}

\begin{lemma}\label{lem:far-tail-absorb}
There is an absolute constant $K>0$ such that, whenever
\[
 m\ge3,
 \qquad
 1\le L_0\le\frac{m-3}{2},
\]
one has
\[
 \sqrt m\,e^{-m/4}
 \le
 K e^{-L_0/2}
 \sqrt{
 1+(L_0+1)\log\frac{em}{L_0+1}}.
\]
\end{lemma}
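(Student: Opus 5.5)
The inequality is elementary, and the plan is to drop the square root and the logarithm on the right-hand side. Since $L_0\ge1$, $m\ge3$, and $L_0+1\le (m-1)/2$, we have $em/(L_0+1)\ge 2e/(1-1/m)\cdot\ldots\ge e$, so $\log\frac{em}{L_0+1}\ge1$. The square root is therefore at least $1$. It thus suffices to prove
\[
 \sqrt m\,e^{-m/4}\le K e^{-L_0/2}
\]
for all $1\le L_0\le (m-3)/2$.

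Since $e^{-L_0/2}$ decreases in $L_0$, the worst case is $L_0=(m-3)/2$, where the right-hand side equals $K e^{-(m-3)/4}$. The inequality becomes
\[
 \sqrt m\,e^{-m/4}\le K e^{3/4}e^{-m/4},
\]
that is, $\sqrt m\le Ke^{3/4}$. This fails for large $m$, so the crude reduction loses too much. The square root factor must be used after all.

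For the refined argument, keep $\sqrt{(L_0+1)\log\frac{em}{L_0+1}}$. In the regime $L_0\ge m/4$, say, we have $L_0+1\ge m/4$, and $\log\frac{em}{L_0+1}\ge1$ as above. The right-hand side is then at least $Ke^{-L_0/2}\sqrt{m/4}\ge K e^{-(m-3)/4}\sqrt m/2$. This dominates $\sqrt m\,e^{-m/4}$ once $K\ge 2e^{-3/4}$, and in fact $K=2$ suffices.

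In the regime $L_0<m/4$, we have $e^{-L_0/2}\ge e^{-m/8}$. So it is enough that $\sqrt m\,e^{-m/4}\le K e^{-m/8}$, i.e. $\sqrt m\,e^{-m/8}\le K$. This holds with $K=\sup_{m\ge1}\sqrt m\,e^{-m/8}=\sqrt{4/e}$, attained at $m=4$. Taking $K:=2$ covers both regimes.

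There is no real obstacle here. The only care needed is to check that the logarithm is at least $1$ under the constraint $L_0+1\le (m-1)/2<m$, which gives $em/(L_0+1)>e$, and to split according to whether $L_0$ is comparable to $m$.
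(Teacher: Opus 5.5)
Your argument is correct and matches the paper's proof. Both split at $L_0\approx m/4$. For small $L_0$ you use $e^{-L_0/2}\ge e^{-m/8}$ and the boundedness of $\sqrt m\,e^{-m/8}$; for large $L_0$ you use $\sqrt{1+(L_0+1)\log\frac{em}{L_0+1}}\ge\sqrt m/2$ together with $e^{-L_0/2}\ge e^{3/4}e^{-m/4}$. The differences are cosmetic: you make the constant explicit ($K=2$), and the abandoned crude reduction in your second paragraph can simply be deleted.
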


\begin{proof}
We split into two cases.

If $L_0\le m/4$, then
\[
 e^{-L_0/2}\ge e^{-m/8}.
\]
Also the square-root factor on the right is at least $1$.  Hence
\[
 \frac{\sqrt m\,e^{-m/4}}
 {e^{-L_0/2}
 \sqrt{1+(L_0+1)\log\frac{em}{L_0+1}}}
 \le
 \sqrt m\,e^{-m/8}.
\]
The function $s\mapsto\sqrt s\,e^{-s/8}$ is bounded on $[3,\infty)$.

Assume now that
\[
 \frac m4<L_0\le\frac{m-3}{2}.
\]
Then $L_0+1>m/4$, and
\[
 \frac{em}{L_0+1}\ge 2e.
\]
Therefore
\[
 1+(L_0+1)\log\frac{em}{L_0+1}
 \ge
 (L_0+1)\log(2e)
 \ge
 \frac m4\log(2e).
\]
Thus the square-root factor is at least $c\sqrt m$ for an absolute
$c>0$.  Since $L_0\le(m-3)/2$,
\[
 e^{-L_0/2}\ge e^{3/4}e^{-m/4}.
\]
Consequently
\[
 e^{-L_0/2}
 \sqrt{1+(L_0+1)\log\frac{em}{L_0+1}}
 \ge c e^{3/4}\sqrt m\,e^{-m/4},
\]
which proves the result after enlarging the absolute constant $K$.
\end{proof}

\begin{lemma}\label{lem:permanent-tail}
There is an absolute constant $K>0$ with the following property.  If
\[
 1\le L_0\le \frac{m-3}{2}
\]
and $n\ge m/e$, then
\[
 \int \sup_{L(x)\ge L_0}|A_{m,n}^{\varepsilon}(x)|\,d\mu(\varepsilon)
 \le
 K e^{-L_0/2}
 \sqrt{
 1+(L_0+1)\log\frac{em}{L_0+1}}
 \sqrt{m!}\,n^{(m+1)/2}.
\]
\end{lemma}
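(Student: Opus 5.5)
Split the set $\{L(x)\ge L_0\}$ into the permanent strata $\mathcal X_k$ for $k\ge\lfloor L_0\rfloor$, and treat the strata with $k+1\le(m-1)/2$ and the remaining ones separately.

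For each stratum we bound the expected supremum with Lemma~\ref{lem:subgaussian-max}. For $x\in\mathcal X_k$, the random variable $A^\varepsilon_{m,n}(x)$ is a Rademacher sum. By Lemma~\ref{lem:permanent-control} its variance proxy satisfies $\sigma_k^2\le m!\,n^m e^{-k}$. Lemma~\ref{lem:subgaussian-max} then gives
\[
 \int\sup_{x\in\mathcal X_k}|A^\varepsilon(x)|\,d\mu
 \le\sqrt{m!}\,n^{m/2}e^{-k/2}\sqrt{2\log(2|\mathcal X_k|)}.
\]
For $k+1\le(m-1)/2$ we use Lemma~\ref{lem:count} together with \eqref{eq:entropy-simple}:
\[
 \log|\mathcal X_k|\le\log m+(n+m)\log2+n(k+1)\log\frac{e(m-1)}{k+1}.
\]
Since $n\ge m/e$, the terms $\log m$ and $m\log 2$ are $O(n)$, so $\log(2|\mathcal X_k|)\le Cn\bigl[1+(k+1)\log\frac{em}{k+1}\bigr]$. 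The $k$-th term is therefore at most
\[
 C\sqrt{m!}\,n^{(m+1)/2}e^{-k/2}\sqrt{1+(k+1)\log\tfrac{em}{k+1}}.
\]
The map $u\mapsto e^{-u/2}\sqrt{1+(u+1)\log(em/(u+1))}$ decays geometrically in $u$: the logarithmic factor grows at most like $u$, while $e^{-u/2}$ decays. Hence the sum over $k\ge\lfloor L_0\rfloor$ is dominated, up to an absolute constant, by its first term. Replacing $\lfloor L_0\rfloor$ by $L_0$ costs only a constant, since $e^{1/2}$ is absorbed and the log factor is monotone up to constants.

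For large $k$, those with $k+1>(m-1)/2$, there is no localization, and we use the trivial count \eqref{eq:trivial-stratum-count}. All these strata lie in $\{L\ge (m-3)/2\}$, i.e.\ $\alpha\le e^{-(m-3)/2}$. We do not sum them separately; instead we treat their union at once with variance proxy $m!\,n^m e^{-(m-3)/2}$ and $\log(2\cdot 2^{mn})\le Cmn$. This gives a contribution
\[
 C\sqrt{m!}\,n^{(m+1)/2}\sqrt m\,e^{-m/4}.
\]
Lemma~\ref{lem:far-tail-absorb} absorbs this into the claimed bound, since $L_0\le(m-3)/2$.

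Finally, the supremum over a union is bounded by the sum of the suprema over its pieces, because all quantities are nonnegative. Adding the two contributions gives the lemma.

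The main obstacle is the bookkeeping in the moderate range. First, we must check that the additive $2^{n+m-1}m$ prefactor in the entropy count is truly $O(n)$, which is exactly where the hypothesis $n\ge m/e$ is used. Second, we must check that the geometric summation over $k$ does not lose a factor depending on $m$. This requires showing that the ratio of consecutive terms is at most some absolute $q<1$ once $k\ge1$; the case distinction near $k+1\approx m/2$ needs care.
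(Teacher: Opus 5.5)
Your proposal is correct and follows the paper's proof essentially step for step: a per-stratum subgaussian bound with $\sigma^2\le m!\,n^m e^{-k}$, the entropy count of Lemma~\ref{lem:count} made $O(n)$ via $n\ge m/e$, geometric summation from $\lfloor L_0\rfloor$, and Lemma~\ref{lem:far-tail-absorb} for the far range, where treating the far strata as one union instead of summing them is only a cosmetic variation. The consecutive-ratio bound you flag follows from the fact, also used in the paper, that $f(s)=1+s\log(em/s)$ is increasing and $f(s)/s$ is decreasing on $[1,m]$. Hence $f(k+2)\le\frac{k+2}{k+1}f(k+1)\le 2f(k+1)$, so the ratio is at most $\sqrt{2/e}$ uniformly, and no special care is needed near $k+1\approx m/2$.
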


\begin{proof}
Consider a nonempty stratum $\mathcal X_k$ defined by
\eqref{eq:permanent-stratum}; empty strata are omitted from the sums below.
If $x\in\mathcal X_k$, then
Lemma~\ref{lem:permanent-control} gives
\[
 \sum_{\mathcal O}|c_{\mathcal O}(x)|^2
 \le
 m!n^m e^{-k}.
\]
Hence, for fixed $x\in\mathcal X_k$, the function on the sign cube
\[
 \varepsilon\longmapsto A_{m,n}^{\varepsilon}(x)
 =
 \sum_{\mathcal O}\varepsilon_{\mathcal O}c_{\mathcal O}(x)
\]
satisfies the exponential-moment hypothesis of Lemma~\ref{lem:subgaussian-max}
with $\sigma^2\le m!n^m e^{-k}$.  Therefore
\begin{equation}\label{eq:tail-stratum}
 \int \sup_{x\in\mathcal X_k}|A_{m,n}^{\varepsilon}(x)|\,d\mu(\varepsilon)
 \le
 K e^{-k/2}\sqrt{\log(2|\mathcal X_k|)}
 \sqrt{m!}\,n^{m/2}.
\end{equation}

Assume first that
\[
 k+1\le \frac{m-1}{2}.
\]
Lemma~\ref{lem:count} and~\eqref{eq:entropy-simple} give
\begin{align}
 \log(2|\mathcal X_k|)
 &\le \log(2m)+(n+m-1)\log2
      +n(k+1)\log\frac{e(m-1)}{k+1}.
 \label{eq:tail-entropy-raw}
\end{align}
Each term in \eqref{eq:tail-entropy-raw} is bounded by a multiple of $n$.  Since $n\ge m/e$,
we have $m\le en$, and hence
\[
 n+m-1\le (1+e)n.
\]
Also $m\ge3$ in the present argument, so $\log(2m)\le m\le en$.
Finally,
\[
 \log\frac{e(m-1)}{k+1}
 \le \log\frac{em}{k+1}.
\]
Consequently
\begin{align}
 \log(2|\mathcal X_k|)
 &\le en+(1+e)n\log2
      +n(k+1)\log\frac{em}{k+1}\notag\\
 &\le K n\left(
 1+(k+1)\log\frac{em}{k+1}
 \right).
 \label{eq:tail-entropy-reduced}
\end{align}
The constant in this entropy estimate is absolute.
Substituting this in~\eqref{eq:tail-stratum}, we obtain
\begin{equation}\label{eq:tail-small-k}
 \int \sup_{x\in\mathcal X_k}|A_{m,n}^{\varepsilon}(x)|\,d\mu(\varepsilon)
 \le
 K e^{-k/2}
 \sqrt{
 1+(k+1)\log\frac{em}{k+1}}
 \sqrt{m!}\,n^{(m+1)/2}.
\end{equation}

To sum these terms, put
\[
 f(s):=1+s\log\frac{em}{s},
 \qquad 1\le s\le m.
\]
On this interval,
\[
 f'(s)=\log\frac ms\ge0,
 \qquad
 \left(\frac{f(s)}s\right)'
 =-\frac1{s^2}-\frac1s<0.
\]
Thus $f$ is increasing, while $f(s)/s$ is decreasing.  Let
$k_0:=\lfloor L_0\rfloor$.  If $k=k_0+j$ remains in the present range, then
\[
 f(k+1)
 \le\frac{k+1}{k_0+1}f(k_0+1)
 \le(j+1)f(k_0+1).
\]
Consequently,
\begin{align*}
 \sum_{\substack{k\ge k_0\\ k+1\le(m-1)/2}}
 e^{-k/2}\sqrt{f(k+1)}
 &\le
 e^{-k_0/2}\sqrt{f(k_0+1)}
 \sum_{j=0}^{\infty}e^{-j/2}\sqrt{j+1}\\
 &\le K e^{-L_0/2}\sqrt{f(L_0+1)}.
\end{align*}
Here $k_0\ge L_0-1$, so
$e^{-k_0/2}\le e^{1/2}e^{-L_0/2}$; the absolute factor $e^{1/2}$ is absorbed
into $K$.  We also use $k_0+1\le L_0+1$ and the monotonicity of $f$.  Summing~\eqref{eq:tail-small-k} over the present range therefore gives
\[
 K e^{-L_0/2}
 \sqrt{
 1+(L_0+1)\log\frac{em}{L_0+1}}
 \sqrt{m!}\,n^{(m+1)/2}.
\]

Consider
\[
 k+1>\frac{m-1}{2}.
\]
Here we only use
\[
 |\mathcal X_k|\le2^{mn}.
\]
Equation~\eqref{eq:tail-stratum} gives
\[
 \int \sup_{x\in\mathcal X_k}|A_{m,n}^{\varepsilon}(x)|\,d\mu(\varepsilon)
 \le
 K e^{-k/2}\sqrt m\,
 \sqrt{m!}\,n^{(m+1)/2}.
\]
Hence
\[
 \sum_{k+1>(m-1)/2}
 \int \sup_{x\in\mathcal X_k}|A_{m,n}^{\varepsilon}(x)|\,d\mu(\varepsilon)
 \le
 K\sqrt m\,e^{-m/4}
 \sqrt{m!}\,n^{(m+1)/2}.
\]
Lemma~\ref{lem:far-tail-absorb} shows that this entire contribution is
bounded by
\[
 K e^{-L_0/2}
 \sqrt{
 1+(L_0+1)\log\frac{em}{L_0+1}}
 \sqrt{m!}\,n^{(m+1)/2}.
\]

Finally, for every choice of the signs,
\[
 \sup_{L(x)\ge L_0}|A_{m,n}^{\varepsilon}(x)|
 \le
 \sum_{k\ge k_0}
 \sup_{x\in\mathcal X_k}|A_{m,n}^{\varepsilon}(x)|.
\]
Integrating with respect to the Rademacher signs, the contribution of
$k+1\le(m-1)/2$ is bounded by the estimate obtained from
\eqref{eq:tail-small-k}, while the complementary range is bounded by
Lemma~\ref{lem:far-tail-absorb}.  Thus both ranges are bounded by a constant
multiple of
\[
 e^{-L_0/2}
 \sqrt{1+(L_0+1)\log\frac{em}{L_0+1}}
 \sqrt{m!}\,n^{(m+1)/2},
\]
which proves the lemma.
\end{proof}

\section{Regular and exceptional variables}\label{sec:prepA-barycentric}
\label{sec:regular-exceptional}

Fix a Hamming anchor $e_0$ and a scale $S\ge1+L$.  We call a vector regular
when it lies within $\eta/S$ of $e_0$, and exceptional otherwise.  The
energy bound leaves at most $O(S^3)$ exceptional indices.  We recenter the
regular vectors at the direction of their sum; their transverse components
then cancel exactly.  The exceptional variables remain in the same symmetric
expansion, where Vandermonde's identity absorbs their binomial mass.

Throughout this section the cube vectors are normalized to have Euclidean norm
one:
\[
 y_r:=n^{-1/2}x^{(r)}\in S^{n-1}.
\]
\begingroup
Let $r_0$ and $\delta_j$ be supplied by Lemma~\ref{lem:hamming}, and define
\[
 D:=\operatorname{diag}\bigl(x^{(r_0)}_1,\ldots,x^{(r_0)}_n\bigr),
 \qquad
 \widetilde x^{(j)}:=\delta_jDx^{(j)},
 \qquad
 \widetilde y_j:=n^{-1/2}\widetilde x^{(j)}.
\]
Since $\delta_{r_0}=1$, this sends the chosen anchor to the all-ones vector:
$\widetilde x^{(r_0)}=\mathbf1$.  Moreover,
\[
 \distH(\widetilde x^{(j)},\mathbf 1)
 =\distH(x^{(j)},\delta_jx^{(r_0)}).
\]
The normalized Gram entries transform as
\[
 \langle\widetilde y_r,\widetilde y_s\rangle
 =\delta_r\delta_s\langle y_r,y_s\rangle.
\]
Hence every term of the permanent is multiplied by
\[
 \prod_{r=1}^m\delta_r\delta_{\sigma(r)}
 =\left(\prod_{r=1}^m\delta_r\right)^2=1,
\]
so the Gram permanent, and therefore the permanent loss, is unchanged.  Henceforth the tildes are omitted: $x^{(j)}$ denotes the transformed cube vector and $y_j=n^{-1/2}x^{(j)}$ its Euclidean normalization.
With this convention, write
\[
 e_0=n^{-1/2}\mathbf 1.
\]
\endgroup
If the permanent loss is at most $L$, Lemma~\ref{lem:hamming} gives
\[
 \sum_{r=1}^m \distH(x^{(r)},\mathbf 1)\le nL.
\]
Since
\[
 \|y_r-e_0\|_2^2
 =\frac4n\distH(x^{(r)},\mathbf 1),
\]
we have the explicit energy estimate
\begin{equation}\label{eq:refinement-global-energy}
 \sum_{r=1}^m\|y_r-e_0\|_2^2\le4L.
\end{equation}
Fix a scale parameter
\begin{equation}\label{eq:refinement-scale}
 S\ge1+L.
\end{equation}
Taking $S$ larger than the loss allows us to use the same decomposition
throughout a permanent stratum.

Fix once and for all
\[
 0<\eta\le\min\{10^{-3},\eta_0\}.
\]
An index $r$ is called \emph{regular} if
\begin{equation}\label{eq:regular-threshold}
 \|y_r-e_0\|_2\le\frac{\eta}{S},
\end{equation}
and \emph{exceptional} otherwise.  Let $\mathcal R$ and $\mathcal E$ denote
the sets of regular and exceptional indices, respectively, and put
$d:=|\mathcal R|$ and $q:=|\mathcal E|$, so that $d+q=m$.

\begin{lemma}\label{lem:exceptional-degree}
With
\[
 Q_0:=4\eta^{-2},
\]
one has
\begin{equation}\label{eq:exceptional-degree}
 q\le Q_0S^3.
\end{equation}
\end{lemma}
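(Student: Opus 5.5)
This is a Chebyshev (Markov) counting argument on the energy estimate \eqref{eq:refinement-global-energy}. Every exceptional index $r\in\mathcal E$ violates \eqref{eq:regular-threshold}, so it satisfies
\[
 \|y_r-e_0\|_2^2>\frac{\eta^2}{S^2}.
\]
Summing over $r\in\mathcal E$ and discarding the nonnegative contributions of the regular indices gives
\[
 q\,\frac{\eta^2}{S^2}
 \le\sum_{r\in\mathcal E}\|y_r-e_0\|_2^2
 \le\sum_{r=1}^m\|y_r-e_0\|_2^2
 \le 4L.
\]

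Next I would use the scale condition \eqref{eq:refinement-scale}. Since $S\ge1+L$, we have $L\le S$. Therefore
\[
 q\le \frac{4L S^2}{\eta^2}\le 4\eta^{-2}S^3=Q_0S^3,
\]
which is \eqref{eq:exceptional-degree}.

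The only point that needs checking is that \eqref{eq:refinement-global-energy} is legitimately available. It comes from Lemma~\ref{lem:hamming} after the sign/diagonal normalization sending the anchor to $\mathbf 1$. That normalization preserves the permanent loss, and it turns the Hamming disagreements into $\|y_r-e_0\|_2^2=\tfrac4n\distH(x^{(r)},\mathbf 1)$. Here $L$ must be an upper bound for the permanent loss $L(x)$; the inequality only improves if $L(x)$ is smaller.

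I expect no real obstacle. The bound is deliberately lossy: it is even cruder than $q\le 4\eta^{-2}LS^2$, and the $S^3$ form is kept only for uniformity across a permanent stratum.
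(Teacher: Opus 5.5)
Your proof is correct and matches the paper's argument: each exceptional index contributes more than $\eta^2/S^2$ to the energy sum \eqref{eq:refinement-global-energy}. Hence $q\eta^2/S^2\le 4L\le 4S$ by \eqref{eq:refinement-scale}, which gives $q\le Q_0S^3$.
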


\begin{proof}
Every exceptional index contributes more than $\eta^2/S^2$ to the left-hand
side of~\eqref{eq:refinement-global-energy}.  Therefore
\[
 q\frac{\eta^2}{S^2}\le4L\le4S,
\]
which is exactly~\eqref{eq:exceptional-degree}.
\end{proof}

Fix a numerical constant $M_0>2Q_0$.  If
\begin{equation}\label{eq:large-degree-refinement}
 m\ge M_0S^3,
\end{equation}
then Lemma~\ref{lem:exceptional-degree} gives
$q\le Q_0S^3<m/2$, and hence $d=m-q\ge m/2$.  For the regular indices set
\[
 Y_{\mathcal R}:=\sum_{r\in\mathcal R}y_r.
\]
Theorem~\ref{thm:abstract-moving-anchor}, applied to the regular family,
shows that $Y_{\mathcal R}\ne0$.  We may therefore set
\[
 e_{\mathcal R}:=\frac{Y_{\mathcal R}}{\|Y_{\mathcal R}\|_2}.
\]

\begin{lemma}
\label{lem:regular-sum-anchor}
Assume~\eqref{eq:large-degree-refinement}.  Then
\begin{align}
 \sum_{r\in\mathcal R}\|y_r-e_{\mathcal R}\|_2^2&\le4L,
 \label{eq:regular-anchor-energy}\\
 \|e_{\mathcal R}-e_0\|_2&\le\frac{\eta}{S},
 \label{eq:regular-anchor-distance}\\
 \max_{r\in\mathcal R}\|y_r-e_{\mathcal R}\|_2&\le\frac{2\eta}{S}.
 \label{eq:regular-individual-distance}
\end{align}
Writing
\[
 y_r=a_re_{\mathcal R}+z_r=a_r(e_{\mathcal R}+v_r),
 \qquad r\in\mathcal R,
\]
one has
\begin{equation}\label{eq:regular-longitudinal-positive}
 a_r\ge\frac12\qquad(r\in\mathcal R),
\end{equation}
as well as
\begin{equation}\label{eq:sum-anchor-cancellation}
 \sum_{r\in\mathcal R}z_r=0,
\end{equation}
and
\begin{align}
 \delta:=\max_{r\in\mathcal R}\|v_r\|_2&\le\frac{4\eta}{S},
 \label{eq:delta-regular}\\
 V:=\sum_{r\in\mathcal R}\|v_r\|_2^2&\le16L\le16S,
 \label{eq:V-regular}\\
 \left\|\sum_{r\in\mathcal R}v_r\right\|_2
 &\le\frac{16\eta L}{S}\le16\eta.
 \label{eq:P1-bounded}
\end{align}
\end{lemma}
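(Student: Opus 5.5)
The plan is to apply Theorem~\ref{thm:abstract-moving-anchor} directly to the regular family $(y_r)_{r\in\mathcal R}$, with anchor $e_0=n^{-1/2}\mathbf 1$, the same scale $S$, and the fixed $\eta\le\eta_0$. Once its hypotheses are verified, every assertion of the lemma is a restatement of its conclusions with $d=|\mathcal R|$ and $e=e_{\mathcal R}$.

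First, the index set must be nonempty. Under~\eqref{eq:large-degree-refinement}, Lemma~\ref{lem:exceptional-degree} gives $q\le Q_0S^3<m/2$, so $d\ge m/2\ge1$. Second, the energy hypothesis in~\eqref{eq:barycentric-hypotheses} holds: the sum over $\mathcal R$ is bounded by the full sum over all $m$ indices, which is at most $4L$ by~\eqref{eq:refinement-global-energy}. Third, the cap hypothesis $\max_{r\in\mathcal R}\|y_r-e_0\|_2\le\eta/S$ is exactly the definition~\eqref{eq:regular-threshold} of regularity. Finally, the scale conditions $S\ge1$ and $0\le L\le S$ follow from $S\ge1+L$ in~\eqref{eq:refinement-scale}, and $0<\eta\le\eta_0$ holds by the choice of $\eta$.

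With the hypotheses in place, part~(i) of Theorem~\ref{thm:abstract-moving-anchor} gives $Y_{\mathcal R}\ne0$ (so $e_{\mathcal R}$ is defined), together with \eqref{eq:regular-anchor-energy}, \eqref{eq:regular-anchor-distance} and \eqref{eq:regular-individual-distance}. Part~(ii) gives $a_r\ge1/2$, the decomposition $y_r=a_r(e_{\mathcal R}+v_r)$ with $z_r\perp e_{\mathcal R}$, the cancellation~\eqref{eq:sum-anchor-cancellation}, and the bounds \eqref{eq:delta-regular}, \eqref{eq:V-regular} and \eqref{eq:P1-bounded}. The last inequalities $16L\le16S$ and $16\eta L/S\le16\eta$ again use $L\le S$.

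There is no genuine obstacle here. The only point needing care is to apply Theorem~\ref{thm:abstract-moving-anchor} to the subfamily indexed by $\mathcal R$, not to all $m$ vectors, and to confirm that the energy bound survives restriction. It does, because the dropped summands are nonnegative.
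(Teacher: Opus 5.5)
Your proposal is correct and matches the paper's proof: apply Theorem~\ref{thm:abstract-moving-anchor} to the regular subfamily after checking four hypotheses, namely $d\ge m/2\ge1$, the energy bound restricted to $\mathcal R$, the cap condition (which is the definition of regularity), and $0\le L\le S$. One small correction: $L\ge0$ comes from $\alpha(x)\le1$, not from $S\ge1+L$, but this does not affect the argument.
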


\begin{proof}
The regular family satisfies the cap and energy hypotheses of
Theorem~\ref{thm:abstract-moving-anchor}, and $0\le L\le S$.  Moreover,
$d\ge m/2\ge1$.  Applying that theorem to $(y_r)_{r\in\mathcal R}$, with
$Y=Y_{\mathcal R}$ and $e=e_{\mathcal R}$, gives all the displayed
conclusions.
\end{proof}

\subsection{Exceptional levels}

From now on write $e:=e_{\mathcal R}$ and
\[
 C_{\mathcal R}:=\prod_{r\in\mathcal R}a_r
 =\prod_{r\in\mathcal R}(1+\|v_r\|_2^2)^{-1/2}.
\]
\begingroup
For $s\in\mathcal E$, define the longitudinal and transverse components by
\[
 b_s:=\langle y_s,e\rangle,
 \qquad
 w_s:=y_s-b_se.
\]
Then $w_s\perp e$ and $y_s=b_se+w_s$.
\endgroup
For $0\le j\le q$, put
\[
 B_j:=
 \sum_{\substack{J\subset\mathcal E\\ |J|=j}}
 \left(\prod_{s\in\mathcal E\setminus J}b_s\right)
 \widehat\bigotimes_{s\in J}w_s.
\]
The next estimate replaces the naive factor $2^q$ by its binomial
square-root at each level.

\begin{lemma}\label{lem:Bj-sharp}
For every $0\le j\le q$,
\begin{equation}\label{eq:Bj-sharp}
 \|B_j\|_\pi\le \sqrt{\binom qj}.
\end{equation}
\end{lemma}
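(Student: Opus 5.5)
The plan is to bound $\|B_j\|_\pi$ by one explicit, natural decomposition of $B_j$ and then use Cauchy--Schwarz. In that decomposition, the subsets $J$ play the role of an orthonormal-type index set. The factors are $b_s=\langle y_s,e\rangle$ and $w_s=y_s-b_se$, with $w_s\perp e$, $\|y_s\|_2=1$, and hence
\[
 b_s^2+\|w_s\|_2^2=1\qquad(s\in\mathcal E).
\]

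First, by \eqref{eq:symmetric-projective-submultiplicative} applied repeatedly, $P_{\sym}$ is a projective contraction. Each summand in $B_j$ is therefore bounded by
\[
 \Bigl\|\Bigl(\prod_{s\notin J}b_s\Bigr)\widehat\bigotimes_{s\in J}w_s\Bigr\|_\pi
 \le\prod_{s\in\mathcal E\setminus J}|b_s|\prod_{s\in J}\|w_s\|_2 .
\]
The triangle inequality then gives
\[
 \|B_j\|_\pi\le\sum_{|J|=j}\prod_{s\notin J}|b_s|\prod_{s\in J}\|w_s\|_2 .
\]

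Second, apply Cauchy--Schwarz over the $\binom qj$ subsets $J$ of size $j$. This yields
\[
 \|B_j\|_\pi\le\binom qj^{1/2}
 \Bigl(\sum_{|J|=j}\prod_{s\notin J}b_s^2\prod_{s\in J}\|w_s\|_2^2\Bigr)^{1/2}.
\]
Put $p_s:=\|w_s\|_2^2\in[0,1]$, so that $b_s^2=1-p_s$. The bracketed sum is then the probability that exactly $j$ of $q$ independent Bernoulli$(p_s)$ variables equal $1$. Equivalently, it is the coefficient of $t^j$ in $\prod_{s}\bigl((1-p_s)+p_st\bigr)$, and it is at most $\prod_s\bigl((1-p_s)+p_s\bigr)=1$. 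This gives \eqref{eq:Bj-sharp}.

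The only point requiring care is the orthogonality bookkeeping: the identity $b_s^2+\|w_s\|_2^2=1$ must be used so that the squared weights form a probability distribution over $J$. No smallness of the exceptional vectors is needed. Everything else follows from the projective-norm submultiplicativity already recorded, together with the fact that $P_{\sym}$ has norm one.
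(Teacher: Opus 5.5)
Your proof is correct and follows the paper's argument: the triangle inequality for the projective norm, then Cauchy--Schwarz over the $\binom qj$ subsets $J$, then bounding the resulting sum by $\prod_{s\in\mathcal E}(b_s^2+\|w_s\|_2^2)=1$. Your Bernoulli-probability reading of the bracketed sum is the same product bound stated in probabilistic language.
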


\begin{proof}
Put $c_s:=\|w_s\|_2$.  Since $y_s$ is a unit vector and $w_s\perp e$,
$c_s^2+b_s^2=1$.  By the triangle inequality for the projective norm,
\[
 \|B_j\|_\pi
 \le
 \sum_{|J|=j}
 \prod_{s\notin J}|b_s|\prod_{s\in J}c_s.
\]
Cauchy--Schwarz gives
\begin{align*}
 \|B_j\|_\pi^2
 &\le \binom qj
 \sum_{|J|=j}
 \prod_{s\notin J}b_s^2\prod_{s\in J}c_s^2\\
 &\le \binom qj
 \prod_{s\in\mathcal E}(c_s^2+b_s^2)
 =\binom qj.
\end{align*}
\end{proof}

\begin{lemma}
\label{lem:slot-invariance}
Fix $q$ and write $d=m-q$.  For each exceptional set $\mathcal E$ of
cardinality $q$, let $\mathcal C(\mathcal E)$ be a class of $m$-tuples.
Assume that, for every $\sigma\in S_m$,
\[
 (y_1,\ldots,y_m)\in\mathcal C(\mathcal E)
 \quad\Longleftrightarrow\quad
 (y_{\sigma^{-1}(1)},\ldots,y_{\sigma^{-1}(m)})
 \in\mathcal C(\sigma\mathcal E).
\]
Define
\[
 \mathscr P(\mathcal E):=
 \left\{P_{\sym}^{(m)}(y_1\otimes\cdots\otimes y_m):
 (y_1,\ldots,y_m)\in\mathcal C(\mathcal E)\right\}.
\]
Then $\mathscr P(\mathcal E)$ is independent of $\mathcal E$.
\end{lemma}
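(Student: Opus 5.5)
The plan is to compare $\mathscr P(\mathcal E)$ and $\mathscr P(\mathcal E')$ for two exceptional sets of the same cardinality $q$. Since $S_m$ acts transitively on $q$-subsets of $\{1,\ldots,m\}$, we can pick $\sigma\in S_m$ with $\mathcal E'=\sigma\mathcal E$. It then suffices to prove the inclusion $\mathscr P(\mathcal E)\subset\mathscr P(\sigma\mathcal E)$ for every $\mathcal E$ and every $\sigma$. The reverse inclusion follows by applying the same statement to $\sigma\mathcal E$ and $\sigma^{-1}$, because $\sigma^{-1}(\sigma\mathcal E)=\mathcal E$.

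For the inclusion, fix $(y_1,\ldots,y_m)\in\mathcal C(\mathcal E)$. By the hypothesis, the permuted tuple $(y_{\sigma^{-1}(1)},\ldots,y_{\sigma^{-1}(m)})$ belongs to $\mathcal C(\sigma\mathcal E)$. Hence
\[
 P_{\sym}^{(m)}\bigl(y_{\sigma^{-1}(1)}\otimes\cdots\otimes y_{\sigma^{-1}(m)}\bigr)\in\mathscr P(\sigma\mathcal E).
\]
The key observation is that $y_{\sigma^{-1}(1)}\otimes\cdots\otimes y_{\sigma^{-1}(m)}=U_\tau(y_1\otimes\cdots\otimes y_m)$ for the factor-permuting isometry $U_\tau$ of Section~\ref{sec:symmetric-tensor-machinery}, with $\tau=\sigma$ or $\sigma^{-1}$ depending on the convention. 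Either way, $P_{\sym}^{(m)}U_\tau=P_{\sym}^{(m)}$, because
\[
 P_{\sym,m}U_\tau=\frac1{m!}\sum_{\rho\in S_m}U_\rho U_\tau=\frac1{m!}\sum_{\rho}U_{\rho\tau}=P_{\sym,m},
\]
where the last step reindexes the sum over the group. Therefore $P_{\sym}^{(m)}(y_1\otimes\cdots\otimes y_m)$ equals an element of $\mathscr P(\sigma\mathcal E)$, which proves the inclusion.

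There is no real obstacle here. The only point needing care is the bookkeeping of the permutation convention: whether $U_\sigma$ sends the $r$-th factor to slot $\sigma(r)$ or to slot $\sigma^{-1}(r)$. I will sidestep this by noting only that the permuted simple tensor equals $U_\tau$ applied to the original simple tensor for some $\tau\in S_m$, and that the invariance of $P_{\sym,m}$ holds for every $\tau$.
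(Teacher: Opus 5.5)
Your proposal is correct and follows the paper's own argument: relabelling the slots by a permutation carrying one exceptional set to the other gives a bijection between the tuple classes, and $P_{\sym}^{(m)}U_\sigma=P_{\sym}^{(m)}$, so the two images coincide. You simply spell out the transitivity of $S_m$ on $q$-subsets and the two inclusions that the paper compresses into the word ``bijection.''
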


\begin{proof}
If a permutation $\sigma$ carries one exceptional set to another, relabelling
by $\sigma$ is a bijection of the corresponding tuple classes, while
$P_{\sym}^{(m)}U_\sigma=P_{\sym}^{(m)}$.  Their images are therefore equal.
\end{proof}

Lemma~\ref{lem:slot-invariance} applies whenever a regular--exceptional class is defined by the designation of the factors and by quantities invariant under a simultaneous relabeling of the $m$ slots.

For a unit vector $e$ and $0\le h\le m$, set
\[
 J_{h,e}^{(m)}:\operatorname{Sym}^h(e^\perp)
 \longrightarrow\operatorname{Sym}^m(\R^n),
 \qquad
 J_{h,e}^{(m)}(w):=
 P_{\sym}^{(m)}\bigl(w\otimes e^{\otimes(m-h)}\bigr).
\]

\begin{lemma}
\label{lem:joint-regular-exceptional}
Let $m=d+q$.  Relative to the regular sum anchor $e=e_{\mathcal R}$, the normalized
symmetrized product has the exact expansion
\begin{equation}\label{eq:joint-regular-exceptional}
 P_{\sym}^{(m)}(y_1\otimes\cdots\otimes y_m)
 =
 \sum_{h=0}^m J_{h,e}^{(m)}(W_h),
\end{equation}
where
\begin{equation}\label{eq:Wh-definition}
 W_h
 :=C_{\mathcal R}\sum_{\ell+j=h}e_\ell(v)\widehat\otimes B_j.
\end{equation}
Moreover,
\begin{equation}\label{eq:total-level-normalization}
 \|J_{h,e}^{(m)}(W_h)\|_2^2
 =\frac{\|W_h\|_2^2}{\binom mh}.
\end{equation}
The ranges of $J_{h,e}^{(m)}$ are mutually orthogonal as $h$ varies.
\end{lemma}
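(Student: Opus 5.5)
Because $P_{\sym}^{(m)}$ is blind to the order of the factors, we may place the exceptional slots first and the regular slots after them; Lemma~\ref{lem:slot-invariance} is exactly this invariance. Write each factor as
\[
 y_r=a_r(e+v_r)\quad(r\in\mathcal R),
 \qquad
 y_s=b_se+w_s\quad(s\in\mathcal E).
\]
Expand the product $\bigotimes_r y_r$ multilinearly. Group the regular factors exactly as in Theorem~\ref{thm:abstract-moving-anchor}(iii), which produces $C_{\mathcal R}\sum_\ell e_\ell(v)\widehat\otimes e^{\otimes(d-\ell)}$. Group the exceptional factors according to the set $J\subset\mathcal E$ of slots where the transverse part $w_s$ is selected; the remaining slots contribute the scalar $\prod_{s\notin J}b_s$ times copies of $e$.

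Two facts then combine the expansions.
\begin{itemize}
\item Symmetric products are associative and commutative after full symmetrization: $P_{\sym,m}(P_{\sym,a}\otimes P_{\sym,b}\otimes\mathrm{Id})=P_{\sym,m}$.
\item Every factor $e$ can be pulled to the end.
\end{itemize}
Hence the total expansion equals
\[
 \sum_{\ell,j}C_{\mathcal R}\,P_{\sym}^{(m)}\bigl(e_\ell(v)\widehat\otimes B_j\otimes e^{\otimes(m-\ell-j)}\bigr).
\]
Collecting the terms with $\ell+j=h$ gives \eqref{eq:joint-regular-exceptional}. For this to be the map $J_{h,e}^{(m)}$, each $W_h$ must lie in $\operatorname{Sym}^h(e^\perp)$. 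That holds because $w_s\perp e$ by construction and $v_r=a_r^{-1}z_r\perp e$ by Theorem~\ref{thm:abstract-moving-anchor}(ii).

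For the norm identity and the orthogonality, fix an orthonormal basis of $\R^n$ with first vector $e$, the other vectors spanning $e^\perp$. Take $w\in\operatorname{Sym}^h(e^\perp)$. The tensor $w\otimes e^{\otimes(m-h)}$ has all its $e^\perp$-components in the first $h$ slots. Its symmetrization is the average, over the $\binom mh$ choices of an $h$-subset $T$ of slots, of the tensor that places $w$ (which is already symmetric) in the slots $T$ and $e$ in the complementary slots. For distinct subsets $T$ these tensors are orthogonal, since they differ in which slots carry $e$ and which carry vectors orthogonal to $e$. Each of them has norm $\|w\|_2$. Therefore
\[
 \|J_{h,e}^{(m)}w\|_2^2=\binom mh^{-2}\cdot\binom mh\|w\|_2^2=\frac{\|w\|_2^2}{\binom mh}.
\]

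Orthogonality across different $h$ follows the same way. A tensor in the range of $J_{h,e}^{(m)}$ is a combination of basis tensors that contain exactly $m-h$ factors equal to $e$. Two such families with different values of $h$ share no basis tensors, so their ranges are orthogonal.

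The only delicate point is bookkeeping. The product of the $b_s$ from the unselected exceptional slots must multiply the correct $B_j$, and the combinatorial normalizations must match: with the convention in \eqref{eq:symmetric-product-convention}, no factorials appear, because $e_\ell(v)$ and $B_j$ are defined as unnormalized sums over subsets. Checking that the subset sums factor as $\sum_{I\subset\mathcal R,\,J\subset\mathcal E}$ is then straightforward.
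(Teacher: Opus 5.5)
Your proposal is correct and follows the paper's proof essentially step for step. Both arguments use the same ingredients:
\begin{enumerate}
\item a canonical relabeling of the slots;
\item the multilinear expansion over $I\subset\mathcal R$, $J\subset\mathcal E$, with the inner symmetrizations absorbed by $P_{\sym}^{(m)}$;
\item the norm identity from writing $J_{h,e}^{(m)}(w)$ as the average of $\binom mh$ isometric placements with mutually orthogonal ranges;
\item orthogonality across $h$ obtained by counting factors from $e^\perp$.
\end{enumerate}
You also check explicitly that $W_h\in\operatorname{Sym}^h(e^\perp)$, which is needed for $J_{h,e}^{(m)}(W_h)$ and the norm identity to make sense; the paper leaves this implicit.
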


\begin{proof}
For fixed $q$, the defining regular--exceptional conditions are invariant
under a simultaneous permutation of the $m$ slots.  Lemma~\ref{lem:slot-invariance}
therefore allows us to replace the actual exceptional set by the canonical
sets
$\mathcal R_0=\{1,\ldots,d\}$ and
$\mathcal E_0=\{d+1,\ldots,m\}$.  Hence no union over the
$\binom mq$ possible exceptional positions is needed.

Expand
\[
 y_r=a_r(e+v_r),\quad r\in\mathcal R_0,
 \qquad
 y_s=b_se+w_s,\quad s\in\mathcal E_0.
\]
For subsets $I\subset\mathcal R_0$ and $J\subset\mathcal E_0$, write
$v_I:=\widehat\bigotimes_{r\in I}v_r$ and
$w_J:=\widehat\bigotimes_{s\in J}w_s$, with the empty product equal to $1$.
Multilinearity gives the exact identity
\[
 P_{\sym}^{(m)}(y_1\otimes\cdots\otimes y_m)
 =C_{\mathcal R}
 \sum_{I\subset\mathcal R_0}
 \sum_{J\subset\mathcal E_0}
 \left(\prod_{s\in\mathcal E_0\setminus J}b_s\right)
 P_{\sym}^{(m)}
 \bigl(v_I\otimes w_J\otimes e^{\otimes(m-|I|-|J|)}\bigr).
\]
Each ordered choice of transverse factors occurs once before
symmetrization, so the expansion carries no additional combinatorial coefficient.

Fix $\ell=|I|$, $j=|J|$ and $h=\ell+j$.  Since
$P_{\sym}^{(m)}(P_{\sym}^{(h)}\otimes I) =P_{\sym}^{(m)}$, the sum over all
such $I,J$ is
\[
 J_{h,e}^{(m)}\!\left(
 C_{\mathcal R}
 e_\ell(v)\widehat\otimes B_j
 \right).
\]
Summing over $\ell+j=h$ proves
\eqref{eq:joint-regular-exceptional}--\eqref{eq:Wh-definition}.  Canonical
relabelling accounts for the exceptional positions, while the choices at
fixed $(\ell,j)$ are already contained in $e_\ell(v)$ and $B_j$.

\begingroup
Let $w\in\operatorname{Sym}^h(e^\perp)$.  For each
$A\subset\{1,\ldots,m\}$ with $|A|=h$, let $\iota_A(w)$ denote the tensor
obtained by placing the $h$ tensor factors of $w$ in the positions indexed by
$A$ and placing $e$ in the remaining positions; this is first defined on
simple tensors and then extended linearly and continuously.  Since $w$ is
symmetric,
\[
 J_{h,e}^{(m)}(w)
 =\binom mh^{-1}
  \sum_{\substack{A\subset\{1,\ldots,m\}\\|A|=h}}\iota_A(w).
\]
Each map $\iota_A$ is an isometry.  If $A\ne B$, choose a position in the
symmetric difference of $A$ and $B$; at that position one tensor has a factor
in $e^\perp$ and the other has the factor $e$.  Hence the ranges of
$\iota_A$ and $\iota_B$ are orthogonal.  Therefore
\[
 \bigl\|J_{h,e}^{(m)}(w)\bigr\|_2^2
 =\binom mh\binom mh^{-2}\|w\|_2^2
 =\frac{\|w\|_2^2}{\binom mh}.
\]
This proves \eqref{eq:total-level-normalization}.  The subspaces corresponding
to different values of $h$ are likewise orthogonal, because they contain
different numbers of factors from $e^\perp$.  This proves the last assertion.
\endgroup
\end{proof}

\begin{lemma}\label{lem:binomial-level-sum}
Let $t_S=1+c/S$, where $c>0$ is the constant in
Proposition~\ref{prop:abstract-moving-estimates}.  There are absolute
constants $C_*,C>0$ such that, whenever $S\ge1$ and $d\ge C_*S^2$,
\begin{equation}\label{eq:binomial-level-sum}
 \sum_{\ell=1}^{d}\frac{t_S^{-\ell}}{\sqrt{\binom d\ell}}
 \le \frac{C}{\sqrt d}.
\end{equation}
\end{lemma}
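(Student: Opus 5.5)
We need to bound $\sum_{\ell=1}^d t_S^{-\ell}\binom d\ell^{-1/2}$ by $C/\sqrt d$ when $d\ge C_*S^2$. The plan is to split the sum into three ranges of $\ell$, using two features: $\binom d\ell$ grows very fast for small $\ell$, while the geometric factor $t_S^{-\ell}=(1+c/S)^{-\ell}$ takes over once $\ell\gtrsim S$.

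\emph{Terms $\ell=1$ and $\ell=2$.} The term $\ell=1$ is at most $d^{-1/2}$. The term $\ell=2$ is $\binom d2^{-1/2}\le 2/d\le 2/\sqrt d$ for $d\ge 2$.

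\emph{Range $3\le\ell\le d/2$.} Use $\binom d\ell\ge (d/\ell)^\ell$, so that $\binom d\ell^{-1/2}\le(\ell/d)^{\ell/2}$.
\begin{itemize}
\item For $3\le\ell\le \ell_1:=\lfloor\sqrt d\rfloor$ we have $\ell/d\le d^{-1/2}$. The terms are then bounded by $d^{-\ell/4}$, and $\sum_{\ell\ge3}d^{-\ell/4}\le C d^{-3/4}\le C/\sqrt d$.
\item For $\ell_1<\ell\le d/2$ the binomial factor is at most $2^{-\ell/2}$, since $\ell/d\le 1/2$. This already gives geometric decay $\sum_{\ell>\sqrt d}2^{-\ell/2}\le C2^{-\sqrt d/2}\le C/\sqrt d$.
\end{itemize}
So in this range the factor $t_S^{-\ell}\le 1$ is simply discarded.

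\emph{Range $d/2<\ell\le d$.} This is the main point, because $\binom d\ell$ becomes small again near $\ell=d$ (it equals $1$ at $\ell=d$). Here the weight $t_S^{-\ell}$ must be used. Write $\ell=d-k$ with $0\le k<d/2$. Since $\binom d\ell=\binom dk\ge1$, each term is at most $t_S^{-d/2}$, and there are at most $d$ such terms, so the range contributes at most $d\,t_S^{-d/2}$. Because $\log t_S\ge c/(2S)$ (after shrinking $c$, as in Proposition~\ref{prop:abstract-moving-estimates}), we get
\[
 d\,t_S^{-d/2}\le d\,e^{-cd/(4S)}.
\]
With $d\ge C_*S^2$ we have $d/S\ge\sqrt{C_*}\sqrt d$, hence this is at most $d\,e^{-c\sqrt{C_*}\sqrt d/4}$. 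Choosing $C_*$ large (say $c\sqrt{C_*}/4\ge1$) gives the bound $d\,e^{-\sqrt d}\le C/\sqrt d$, since $d^{3/2}e^{-\sqrt d}$ is bounded.

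\emph{Main obstacle.} The only step needing care is this last one: it is exactly where the hypothesis $d\ge C_*S^2$ enters, through $d/S\gtrsim\sqrt d$, so that the exponential decay of $t_S^{-d/2}$ beats the polynomial factor $d^{3/2}$. Everything else is absolute. Summing the three contributions yields~\eqref{eq:binomial-level-sum}; small $d$, such as $d=1$, are handled trivially by enlarging $C$.
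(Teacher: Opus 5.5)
Your proof is correct, but it takes a different route from the paper's.

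The paper uses only the crude bound $\binom d\ell\ge\binom d2$ for $2\le\ell\le d-2$ and lets the weight do the rest. Since $\sum_{\ell\ge2}t_S^{-\ell}\le S/c$, the middle range contributes $O(S/d)$, and the hypothesis $d\ge C_*S^2$ is exactly what turns $S/d$ into $O(d^{-1/2})$. The terms $\ell=1$ and $\ell=d-1$ are trivial, and $\ell=d$ is handled by $t_S^{-d}\le e^{-cd/(2S)}$.

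You never use $t_S$ on the lower half. There $\binom d\ell\ge(d/\ell)^\ell$ alone gives the $O(d^{-1/2})$ bound, independently of $S$. You use the weight only for $\ell>d/2$, paying a factor $d$ for the number of terms.

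What each buys:
\begin{itemize}
\item The paper's route is shorter. Its quadratic threshold is precisely what Lemma~\ref{lem:parameter-compatibility} supplies anyway, so nothing is lost in the application.
\item Your route involves $S$ only through the single condition $d\,e^{-cd/(4S)}\lesssim d^{-1/2}$. It therefore proves the estimate under the weaker hypothesis $d\ge C\,S\log(e+S)$. This is sharp up to constants, because the $\ell=d$ term alone is at least $e^{-cd/S}$.
\end{itemize}

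Your remark about shrinking $c$ is harmless. The left-hand side decreases as $c$ increases, so proving the bound for a smaller $c$ suffices. In any case the proposition already has $c\le1$, so $\log t_S\ge c/(2S)$ holds as stated.
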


\begin{proof}
The terms $\ell=1$ and $\ell=d-1$ are bounded by $d^{-1/2}$.  For
$2\le\ell\le d-2$ we have
\[
 \binom d\ell\ge\binom d2.
\]
Since $t_S=1+c/S$,
\[
 \sum_{\ell\ge2}t_S^{-\ell}
 =\frac{t_S^{-2}}{1-t_S^{-1}}
 =\frac{1}{t_S(t_S-1)}
 =\frac{S}{c\,t_S}
 \le \frac{S}{c}.
\]
Therefore
\[
 \sum_{\ell=2}^{d-2}
 \frac{t_S^{-\ell}}{\sqrt{\binom d\ell}}
 \le \frac{C}{d}\sum_{\ell\ge2}t_S^{-\ell}
 \le C\frac{S}{d}.
\]
The hypothesis $d\ge C_*S^2$ gives
\[
 \frac{S}{d}\le\frac{1}{\sqrt{C_*}}\frac1{\sqrt d},
\]
so the middle range is bounded by $C/\sqrt d$.  It remains to treat
$\ell=d$.  We may assume $0<c<1$.  Since $0<c/S\le1$, the inequality
$\log(1+u)\ge u/2$ for $0\le u\le1$, applied with $u=c/S$, gives
\[
 t_S^{-d}\le \exp\!\left(-\frac{cd}{2S}\right).
\]
After increasing $C_*$, this is at most $d^{-1/2}$ for every $S\ge1$ and
$d\ge C_*S^2$.  Adding the bounds for $\ell=1,d-1$, the middle range, and $\ell=d$ proves
\eqref{eq:binomial-level-sum}.
\end{proof}

\begin{lemma}
\label{lem:fixed-anchor-joint-width}
There are absolute constants $c_*,C_*,C>0$ with the following property.
Let $S\ge1$ and suppose
\[
 q\le Q_0S^3,
 \qquad
 d=m-q\ge\frac m2,
 \qquad
 d\ge C_*S^2,
\]
and
\begin{equation}\label{eq:joint-admissibility}
 (q+1)\sqrt{\log(em)}\le c_*\sqrt d.
\end{equation}
Fix a unit vector $e\in\mathbb R^n$.  Consider all symmetrized products in
\eqref{eq:joint-regular-exceptional} for which
$v_1,\ldots,v_d\in e^\perp$ satisfy
\[
 \max_r\|v_r\|_2\le\frac{4\eta}{S},\qquad
 \sum_r\|v_r\|_2^2\le16S,\qquad
 \left\|\sum_rv_r\right\|_2\le16\eta,
\]
while the $q$ exceptional factors are arbitrary unit vectors, and set
\[
 C_{\mathcal R}(v):=
 \prod_{r=1}^d(1+\|v_r\|_2^2)^{-1/2}.
\]
This class has Gaussian width at most $C\sqrt n$.
\end{lemma}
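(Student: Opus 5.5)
We bound the width of the class $\mathcal T$ of tensors $\sum_{h}J_{h,e}^{(m)}(W_h)$ level by level, using the orthogonal decomposition of Lemma~\ref{lem:joint-regular-exceptional}. For a standard Gaussian $G$ in $\operatorname{Sym}^m(\R^n)$, the orthogonality of the ranges of the $J_{h,e}^{(m)}$ gives
\[
 \langle G,J_{h,e}^{(m)}(W_h)\rangle
 =\binom mh^{-1/2}\langle G_h,W_h\rangle ,
\]
where $G_h:=\binom mh^{1/2}(J_{h,e}^{(m)})^{*}G$. By \eqref{eq:total-level-normalization}, $\binom mh^{-1/2}J_{h,e}^{(m)}$ is an isometry on $\operatorname{Sym}^h(e^\perp)$, so $G_h$ is a standard Gaussian vector in $\operatorname{Sym}^h(e^\perp)$. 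The level $h=0$ is the scalar $W_0=C_{\mathcal R}\prod_sb_s$, with $|W_0|\le1$. It contributes $|\langle G,e^{\otimes m}\rangle|$, whose expectation is at most $1$.

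For $h\ge1$ we use the injective--projective duality:
\[
 \sup|\langle G_h,W_h\rangle|\le\|G_h\|_\varepsilon\,\sup\|W_h\|_\pi .
\]
Lemma~\ref{lem:gaussian-tensor}, applied with ambient dimension at most $n$, gives $\int\|G_h\|_\varepsilon\le K\sqrt{n\log(eh)}$. Thus the width is at most
\[
 1+K\sqrt n\sum_{h\ge1}\binom mh^{-1/2}\sqrt{\log(eh)}\,\sup\|W_h\|_\pi ,
\]
and it remains to show that the weighted sum is $O(1)$.

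Expand $W_h=C_{\mathcal R}\sum_{\ell+j=h}e_\ell(v)\widehat\otimes B_j$ and use \eqref{eq:symmetric-projective-submultiplicative} together with Lemma~\ref{lem:Bj-sharp}. This bounds the sum by
\[
 \sum_{j=0}^q\sqrt{\binom qj}\;\sum_{\ell\ge0}
 \frac{\sqrt{\log(e(\ell+j))}}{\sqrt{\binom m{\ell+j}}}\;C_{\mathcal R}\|e_\ell(v)\|_\pi .
\]
The key comparison is the inequality
\[
 \binom m{\ell+j}\ge\binom d\ell\binom qj ,
\]
which holds because it is a single term of Vandermonde's identity. It cancels the factor $\sqrt{\binom qj}$ and leaves
\[
 \sum_{j=0}^q\sum_{\ell}\binom d\ell^{-1/2}\sqrt{\log(em)}\;C_{\mathcal R}\|e_\ell(v)\|_\pi .
\]
We treat the terms $\ell=0$ and $\ell\ge1$ separately.

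\emph{Terms with $\ell\ge1$.} Here we write
\[
 C_{\mathcal R}\|e_\ell(v)\|_\pi=\bigl(C_{\mathcal R}\|e_\ell\|_\pi t_S^\ell\bigr)\,t_S^{-\ell}.
\]
The first factor is at most $C$ by \eqref{eq:abstract-moving-mgf}; the hypotheses of Proposition~\ref{prop:abstract-moving-estimates} are exactly the assumed constraints on $v$. Lemma~\ref{lem:binomial-level-sum}, which applies since $d\ge C_*S^2$, then gives
\[
 \sum_{\ell\ge1}t_S^{-\ell}\binom d\ell^{-1/2}\le C/\sqrt d .
\]
Summing over the $q+1$ values of $j$, these terms contribute at most
\[
 C(q+1)\sqrt{\log(em)}/\sqrt d\le Cc_* ,
\]
by \eqref{eq:joint-admissibility}.

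\emph{Terms with $\ell=0$.} These are the hardest: the Vandermonde bound gives no $d^{-1/2}$ gain. For these terms we do not pass to $\binom qj$. We use instead
\[
 \binom mj\ge\binom qj\Bigl(\frac mq\Bigr)^{j}
 \quad\text{or}\quad
 \binom mj\ge\binom qj\binom d1\ \ (j\ge1),
\]
and the second gives a factor $d^{-1/2}$, after which \eqref{eq:joint-admissibility} again bounds the sum over $j\ge1$ by $Cc_*$. The term $j=0,\ell=0$ is $h=0$, already handled above. I expect the main obstacle to be this bookkeeping of the $\sqrt{\log(eh)}$ weights against $\log(em)$ and the $\ell=0$ exceptional levels. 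If the crude $\log(em)$ absorption fails, I would instead use \eqref{eq:abstract-moving-weighted} for the pure regular part and reserve the admissibility condition for the $(q+1)$ factor. Collecting all levels gives width at most $C\sqrt n$.
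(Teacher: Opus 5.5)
Your route is the paper's. You decompose into the orthogonal levels $J_{h,e}^{(m)}$ and pull back the Gaussian with the factor $\binom mh^{-1/2}$. You bound each level by $\|G\|_\varepsilon\|W\|_\pi$ and Lemma~\ref{lem:gaussian-tensor}. For $\ell\ge1$ you combine \eqref{eq:abstract-moving-mgf}, Lemma~\ref{lem:Bj-sharp}, the Vandermonde absorption $\binom m{\ell+j}\ge\binom d\ell\binom qj$, Lemma~\ref{lem:binomial-level-sum} and \eqref{eq:joint-admissibility}, exactly as the paper does. Grouping by $h$ through $\|W_h\|_\pi$, instead of by $(\ell,j)$ through Minkowski sums, makes no difference. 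There is one normalization slip: by \eqref{eq:total-level-normalization}, the isometry is $\binom mh^{1/2}J_{h,e}^{(m)}$, not $\binom mh^{-1/2}J_{h,e}^{(m)}$. Your definition $G_h=\binom mh^{1/2}(J_{h,e}^{(m)})^*G$ and the displayed identity are nevertheless correct.

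The genuine gap is in the purely exceptional terms, $\ell=0$ and $j\ge1$. The inequality you actually rely on there, $\binom mj\ge\binom qj\binom d1$, is false. At $j=1$ it reads $d+q\ge dq$, which fails as soon as $q\ge2$ and $d\ge3$, and in this lemma $d$ is large. Vandermonde's identity only gives $\binom mj\ge d\binom q{j-1}$. The first alternative you list is the right one, and it closes the argument immediately. Indeed,
\[
 \frac{\binom qj}{\binom mj}=\prod_{r=0}^{j-1}\frac{q-r}{m-r}\le\Bigl(\frac qm\Bigr)^j\le 2^{-j},
\]
since $q=m-d\le m/2$. Because $C_{\mathcal R}\|e_0(v)\|_\pi=C_{\mathcal R}\le1$, the $\ell=0$ levels therefore contribute at most $K\sqrt n\sum_{j\ge1}\sqrt{\log(ej)}\,2^{-j/2}=O(\sqrt n)$. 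No use of \eqref{eq:joint-admissibility} and no $d^{-1/2}$ gain is needed. This geometric decay in $j$ is exactly the paper's step, written there with $\rho=q/(m-q)\le1/2$ deduced from \eqref{eq:joint-admissibility} with $c_*\le1/2$. With this replacement your proof is complete, and the fallback to \eqref{eq:abstract-moving-weighted} is unnecessary.
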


\begin{proof}
If $n=1$, the transverse space $e^\perp$ is zero-dimensional, and the
class lies in the segment $[-1,1]e^{\otimes m}$, whose Gaussian width is
$\sqrt{2/\pi}$.  We may therefore assume $n\ge2$.
For $0\le\ell\le d$, $0\le j\le q$, and $h:=\ell+j\ge1$, let
$\mathfrak W_{\ell,j}$ be the class of elements
\[
 J_{h,e}^{(m)}\!\left(
 C_{\mathcal R}(v)e_\ell(v)\widehat\otimes B_j
 \right)
\]
arising from admissible data.  Every element under consideration belongs to
the Minkowski sum of these pairwise classes together with its scalar $h=0$
component.  For bounded sets $E,F$ in a Hilbert space,
\[
 w(E+F)
 =\int\sup_{u\in E,\,v\in F}|\langle g,u+v\rangle|\,d\gamma(g)
 \le w(E)+w(F).
\]
Thus it is enough to estimate $w(\mathfrak W_{\ell,j})$ and sum in
$\ell$ and $j$.

Let $G_h$ be standard Gaussian in the $h$th orthogonal range of
$J_{h,e}^{(m)}$.  By \eqref{eq:total-level-normalization}, for
$w\in\operatorname{Sym}^h(e^\perp)$,
\[
 \int\bigl|\langle G_h,J_{h,e}^{(m)}(w)\rangle\bigr|^2\,d\gamma(G_h)
 =\frac{\|w\|_2^2}{\binom mh}.
\]
Hence the covariance induced on $\operatorname{Sym}^h(e^\perp)$ by
$J_{h,e}^{(m)}$ is $\binom mh^{-1}$ times the Hilbert covariance.
Equivalently, after pullback the corresponding Gaussian linear functional is
$\binom mh^{-1/2}\langle G,w\rangle$, where $G$ is standard Gaussian in
$\operatorname{Sym}^h(e^\perp)$.  For each admissible pair $v,B_j$, duality and
\eqref{eq:symmetric-projective-submultiplicative} give
\begin{align*}
 \bigl|
 \langle G,
 C_{\mathcal R}e_\ell(v)\widehat\otimes B_j
 \rangle
 \bigr|
 &\le
 \|G\|_\varepsilon\,
 C_{\mathcal R}
 \|e_\ell(v)\widehat\otimes B_j\|_\pi\\
 &\le
 \|G\|_\varepsilon\,
 C_{\mathcal R}\|e_\ell(v)\|_\pi\,\|B_j\|_\pi.
\end{align*}
Taking the supremum, integrating with respect to Gaussian measure, and then applying
Lemma~\ref{lem:gaussian-tensor}, gives
\begin{equation}\label{eq:joint-pair-preliminary}
 w(\mathfrak W_{\ell,j})
 \le
 \frac{C\sqrt{n\log(eh)}}{\sqrt{\binom mh}}
 \sup C_{\mathcal R}\|e_\ell(v)\|_\pi\,\|B_j\|_\pi.
\end{equation}
The hypotheses on $v$ are precisely those of
Proposition~\ref{prop:abstract-moving-estimates}.  Hence
\[
 C_{\mathcal R}(v)
 \sum_{r\ge0}\|e_r(v)\|_\pi t_S^r\le C,
\]
and in particular
$C_{\mathcal R}(v)\|e_\ell(v)\|_\pi\le Ct_S^{-\ell}$.  Meanwhile,
Lemma~\ref{lem:Bj-sharp} gives
$\|B_j\|_\pi\le\sqrt{\binom qj}$.  Hence, with
$\omega_{\ell,j}:=w(\mathfrak W_{\ell,j})$,
\begin{equation}\label{eq:joint-pair-width}
 \omega_{\ell,j}\le
 C\sqrt{n\log(eh)}\,t_S^{-\ell}
 \sqrt{\frac{\binom qj}{\binom m{\ell+j}}}.
\end{equation}

For $\ell\ge1$, Vandermonde's identity gives
\[
 \binom m{\ell+j}
 =\sum_{r}\binom dr\binom q{\ell+j-r}
 \ge \binom d\ell\binom qj.
\]
Consequently the coefficient in \eqref{eq:joint-pair-width} satisfies the
pointwise bound
\begin{equation}\label{eq:vandermonde-pointwise-absorption}
 \sqrt{\frac{\binom qj}{\binom m{\ell+j}}}
 \le \frac1{\sqrt{\binom d\ell}}.
\end{equation}
Hence the factor $\binom qj$ is absorbed at each fixed level before
summing in $j$. Since
$\log(e(\ell+j))\le\log(em)$,
Lemma~\ref{lem:binomial-level-sum} yields
\begin{align*}
 \sum_{\substack{\ell\ge1\\0\le j\le q}}\omega_{\ell,j}
 &\le
 C\sqrt n\,(q+1)\sqrt{\log(em)}
 \sum_{\ell=1}^d
 \frac{t_S^{-\ell}}{\sqrt{\binom d\ell}}\\
 &\le
 C\sqrt n\,
 \frac{(q+1)\sqrt{\log(em)}}{\sqrt d}
 \le C\sqrt n
\end{align*}
by \eqref{eq:joint-admissibility}.

It remains to consider $\ell=0$.  Put
\[
 \rho:=\frac qd=\frac q{m-q}.
\]
Choose $c_*\le1/2$.  Since \eqref{eq:joint-admissibility} and
$\log(em)\ge1$ imply $q\le c_*\sqrt d$, one has
$\rho\le c_*/\sqrt d\le1/2$.  For $1\le j\le q$,
\[
 \frac{\binom qj}{\binom mj}
 =
 \prod_{r=0}^{j-1}\frac{q-r}{m-r}
 \le
 \left(\frac q{m-q}\right)^j
 =
 \rho^j.
\]
The purely exceptional levels are therefore bounded by
\[
 C\sqrt n
 \sum_{j\ge1}
 \sqrt{\log(ej)}\,\rho^{j/2}
 \le C\sqrt n.
\]
For $h=0$ one has $W_0=C_{\mathcal R}B_0$, so $|W_0|\le1$.
The elements $J_{0,e}^{(m)}(W_0)=W_0e^{\otimes m}$ lie in a one-dimensional
segment and have Gaussian width at most $\int_{\mathbb R}|t|\,d\gamma_{\mathbb R}(t)$.  Summing the levels
completes the proof.
\end{proof}

\section{Proof of the upper bound in Theorem~A}\label{sec:proofA}

Lemma~\ref{lem:fixed-anchor-joint-width} treats a fixed regular sum
anchor.  In the application we group configurations by their sum direction
and exceptional degree and use Lemma~\ref{lem:width-union}.  On the $k$th
permanent stratum the Hilbert radius is at most $e^{-k/2}$, which compensates
for the entropy of this finite union.  The required parameter ranges are
collected in Lemma~\ref{lem:parameter-compatibility}.

Fix once and for all constants $c_*,C_*>0$ for which
Lemma~\ref{lem:fixed-anchor-joint-width} holds.

\begin{lemma}
\label{lem:parameter-compatibility}
Let
\[
 T_m:=\log\log(m+e^e),
 \qquad L_0:=2T_m+8,
 \qquad S_k:=k+2.
\]
There is an absolute integer $m_0$ such that, whenever $m\ge m_0$,
\[
 1\le L_0\le\frac{m-3}{2},
\]
and, for every pair of integers
\[
 0\le k\le\lceil L_0\rceil,
 \qquad 0\le q\le Q_0S_k^3,
\]
with $d:=m-q$, one has
\begin{align*}
 m&\ge M_0S_k^3,
 &d&\ge\frac m2,
 &d&\ge C_*S_k^2,\\
 (q+1)\sqrt{\log(em)}&\le c_*\sqrt d.
\end{align*}
\end{lemma}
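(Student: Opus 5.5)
This is a bookkeeping statement, so the plan is to check that every parameter stays polylogarithmic in $m$ while the requirements are at worst polynomial in those parameters.

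\emph{Bounding the parameters.} Since $T_m=\log\log(m+e^e)$, we have $L_0=O(\log\log m)$. For $k\le\lceil L_0\rceil$ this gives
\[
 S_k=k+2\le L_0+3=O(\log\log m)
\]
and
\[
 q\le Q_0S_k^3=O\bigl((\log\log m)^3\bigr),
\]
with constants depending only on the fixed numerical quantities $\eta$, $Q_0$, $M_0$, $c_*$, $C_*$. Write $\Lambda_m:=2T_m+11$. This is an explicit upper bound for $S_k$ valid for all admissible $k$, since $\lceil L_0\rceil+2\le L_0+3$.

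\emph{Checking each condition.} I verify the conditions in order; each reduces to a monotone inequality in $m$ holding for $m$ large.
\begin{enumerate}
\item $L_0\ge1$ is automatic, because $T_m\ge \log\log e^e=1$. The bound $L_0\le(m-3)/2$ follows since $2T_m+8=O(\log\log m)$ is eventually below $(m-3)/2$.
\item $m\ge M_0S_k^3$ follows from $M_0\Lambda_m^3=o(m)$.
\item $q\le Q_0\Lambda_m^3\le m/2$ for large $m$, since $Q_0\Lambda_m^3=o(m)$. Hence $d\ge m/2$.
\item $d\ge C_*S_k^2$ follows from $d\ge m/2$ and $C_*\Lambda_m^2=o(m)$.
\item For the admissibility condition, it suffices by $d\ge m/2$ to show
\[
 (Q_0\Lambda_m^3+1)\sqrt{\log(em)}\le c_*\sqrt{m/2}.
\]
The left side is $O\bigl((\log\log m)^3\sqrt{\log m}\bigr)=o(\sqrt m)$.
\end{enumerate}

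\emph{Choice of $m_0$.} Each of the finitely many inequalities above has the form $f_i(m)\le g_i(m)$. In each case $f_i$ grows polylogarithmically in $m$ and $g_i$ grows like a positive power of $m$, so each holds for all $m\ge m_i$, where $m_i$ depends only on the absolute constants. Taking $m_0:=\max_i m_i$ gives the result. Every bound was expressed through the uniform quantity $\Lambda_m$, so no condition depends on the particular $k$ or $q$ chosen.

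\emph{Main care point.} The only real issue is uniformity: the conditions must hold for all admissible $k$ and $q$ simultaneously, and not merely for each fixed pair. Replacing $S_k$ by the uniform bound $\Lambda_m$ at the outset handles this. For the "eventually" claims it is enough to note that $\Lambda_m^a(\log m)^{1/2}/\sqrt m\to0$ for each fixed power $a$.
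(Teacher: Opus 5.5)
Your proposal is correct and follows essentially the same route as the paper: you bound $S_k\le 2T_m+11=O(\log\log m)$ uniformly in $k$, and each condition then reduces to comparing a polylogarithmic quantity with a positive power of $m$. The only cosmetic difference is in the step $d\ge m/2$: the paper deduces $q<m/2$ from $m\ge M_0S_k^3$ together with $M_0>2Q_0$, whereas you obtain it directly from $Q_0\Lambda_m^3=o(m)$, and both arguments work.
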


\begin{proof}
Uniformly in $k$,
$S_k\le2T_m+11=O(\log\log m)$, and hence $S_k^3=o(m)$.  Thus
$m\ge M_0S_k^3$ for large $m$.  Since $M_0>2Q_0$,
\[
 q\le Q_0S_k^3\le\frac{Q_0}{M_0}m<\frac m2,
\]
so $d\ge m/2$.  The inequality $d\ge C_*S_k^2$ follows from
$m/S_k^2\to\infty$, while
\[
 \frac{(q+1)\sqrt{\log(em)}}{\sqrt d}
 \le C\frac{(1+S_k^3)\sqrt{\log(em)}}{\sqrt m}
 \longrightarrow0
\]
uniformly in $k$.  Finally, $L_0=O(\log\log m)=o(m)$, which gives the
required tail range.
\end{proof}

\begin{lemma}\label{lem:moving-anchor-entropy}
Fix a loss stratum
\[
 e^{-(k+1)}<\alpha(x)\le e^{-k}
\]
and a regular--exceptional decomposition with $d$ regular variables.  After
the Hamming normalization of Lemma~\ref{lem:hamming}, suppose that
\[
 \sum_{r\in\mathcal R}\distH(x^{(r)},\mathbf 1)
 \le n(k+1).
\]
Set
\[
 Y_{\mathcal R}:=n^{-1/2}\sum_{r\in\mathcal R}x^{(r)},
 \qquad
 e_{\mathcal R}:=\frac{Y_{\mathcal R}}{\|Y_{\mathcal R}\|_2}
\]
whenever $Y_{\mathcal R}\ne0$.  Then the number of possible directions
$e_{\mathcal R}$ is at most
\begin{equation}\label{eq:moving-anchor-entropy-lemma}
 M_k
 \le
 \binom{n+\lfloor n(k+1)\rfloor}{n}
 \le \exp\{Cn\log(e+k)\}
\end{equation}
for an absolute constant $C$.
\end{lemma}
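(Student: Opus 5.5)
The direction $e_{\mathcal R}$ depends only on the unnormalized sum $\Sigma:=\sum_{r\in\mathcal R}x^{(r)}\in\mathbb Z^n$, since $e_{\mathcal R}=\Sigma/\|\Sigma\|_2$. It is therefore enough to count the possible integer vectors $\Sigma$; the count of directions is at most the count of sums. I will encode $\Sigma$ by its deficit from the all-ones configuration. Write each $x^{(r)}=\mathbf 1-2\,\mathbf 1_{D_r}$, where $D_r\subset\{1,\ldots,n\}$ is the set of coordinates where $x^{(r)}$ equals $-1$. Then
\[
 \Sigma=d\,\mathbf 1-2\sum_{r\in\mathcal R}\mathbf 1_{D_r}.
\]
Once $d$ is fixed, $\Sigma$ is determined by the vector of multiplicities $\kappa_i:=|\{r\in\mathcal R:i\in D_r\}|$, for $1\le i\le n$. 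These are nonnegative integers with
\[
 \sum_i\kappa_i=\sum_{r\in\mathcal R}\distH(x^{(r)},\mathbf 1)\le n(k+1),
\]
by the hypothesis. Here $d$ is fixed by the chosen regular--exceptional decomposition, so it contributes no entropy.

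The counting step is stars and bars. The number of $\kappa\in\mathbb Z_{\ge0}^n$ with $\sum_i\kappa_i\le N$, where $N:=\lfloor n(k+1)\rfloor$, equals the number of solutions of $\kappa_1+\cdots+\kappa_n+\kappa_{n+1}=N$ with a slack variable $\kappa_{n+1}$. That number is $\binom{N+n}{n}$, which gives the first inequality in \eqref{eq:moving-anchor-entropy-lemma}. I do not need to use the constraint $\kappa_i\le d$; dropping it only enlarges the count.

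For the exponential bound I will use $\binom{a}{b}\le(ea/b)^b$ with $a=n+N\le n(k+2)$ and $b=n$. This gives
\[
 M_k\le\bigl(e(k+2)\bigr)^n=\exp\{n(1+\log(k+2))\}.
\]
Since $1+\log(k+2)\le C\log(e+k)$ for all $k\ge0$ with an absolute $C$, the second inequality follows.

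There is no real obstacle. The only point needing care is to justify counting sums rather than tuples: the map from the data $(\kappa_i)$ to $e_{\mathcal R}$ is well defined on configurations with $Y_{\mathcal R}\ne0$, so the image has at most as many elements as the domain. In the intended regime, Lemma~\ref{lem:regular-sum-anchor} already guarantees $Y_{\mathcal R}\ne0$.
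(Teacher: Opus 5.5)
Your proof is correct and follows the paper's argument essentially verbatim. Your multiplicities $\kappa_i$ are the paper's $\nu_i$, and the sum vector $d\mathbf 1-2\kappa$ determines the direction. Stars and bars then gives $\binom{n+\lfloor n(k+1)\rfloor}{n}$, and $\binom{N}{n}\le(eN/n)^n$ with $N/n\le k+2$ yields the exponential bound.
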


\begin{proof}
For $1\le i\le n$, let $\nu_i$ be the number of regular vectors whose $i$th
coordinate equals $-1$.  Then
\[
 (Y_{\mathcal R})_i=n^{-1/2}(d-2\nu_i),
\]
so the integer vector $\nu=(\nu_1,\ldots,\nu_n)$ determines
$Y_{\mathcal R}$ and hence its normalized direction.  Moreover,
\[
 \sum_{i=1}^n\nu_i
 =\sum_{r\in\mathcal R}\distH(x^{(r)},\mathbf1)
 \le n(k+1).
\]
The number of nonnegative integer vectors satisfying this inequality is
\[
 \binom{n+\lfloor n(k+1)\rfloor}{n},
\]
by stars and bars, and the last estimate in
\eqref{eq:moving-anchor-entropy-lemma} follows from
\[
 \binom{N}{n}\le \left(\frac{eN}{n}\right)^n,
 \qquad N\ge n,
\]
with $N=n+\lfloor n(k+1)\rfloor$.  More explicitly,
\[
 \frac{N}{n}
 =1+\frac{\lfloor n(k+1)\rfloor}{n}
 \le k+2,
\]
and hence
\[
 \binom{n+\lfloor n(k+1)\rfloor}{n}
 \le \bigl(e(k+2)\bigr)^n
 =\exp\{n[1+\log(k+2)]\}
 \le \exp\{Cn\log(e+k)\}.
\]
Different vectors $\nu$ may yield the same normalized direction, which can
only decrease the number of anchors.
\end{proof}

\begin{proposition}
\label{prop:regular-exceptional-low-loss}
There is an absolute constant $C>0$ such that, for all sufficiently large
$m$ and every $n\ge m/e$, with
\begin{equation}\label{eq:L0-definition}
 T_m:=\log\log(m+e^e),
 \qquad
 L_0:=2T_m+8,
\end{equation}
one has
\begin{equation}\label{eq:low-loss-final}
 \int \sup_{L(x)\le L_0}|A_{m,n}^{\varepsilon}(x)|\,d\mu(\varepsilon)
 \le
 C\sqrt{m!}\,n^{(m+1)/2}.
\end{equation}
\end{proposition}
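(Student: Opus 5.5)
The plan is to pass from the Rademacher supremum to a Gaussian width of the set of symmetrized products $\Psi(x)=n^{m/2}\Phi(x)$, and then to cover the low-loss configurations by finitely many pieces, each of which is an isometric image of a subset of the class treated in Lemma~\ref{lem:fixed-anchor-joint-width}.

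\emph{Step 1: reduction to a width of $\Phi$.} Let $\mathcal X_{\le}:=\{x:L(x)\le L_0\}$. By Lemma~\ref{lem:comparison-principles}(ii), the left side of \eqref{eq:low-loss-final} is at most $\sqrt{\pi/2}\,w(\{c(x):x\in\mathcal X_{\le}\})$. I would write the absolute supremum as a supremum of $\langle g,\eta c(x)\rangle$ over $\eta\in\{\pm1\}$. Lemma~\ref{lem:increment-comparison} supplies exactly the metric hypothesis of Lemma~\ref{lem:comparison-principles}(i), with $\lambda=\sqrt{m!}$ and $b(\eta,x)=\eta\Psi(x)$. Hence the left side is at most $C\sqrt{m!}\,n^{m/2}\,w(\Phi(\mathcal X_{\le}))$, and it suffices to prove
\[
 w\bigl(\Phi(\mathcal X_{\le})\bigr)\le C\sqrt n .
\]

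\emph{Step 2: the covering.} Take $m\ge m_0$ as in Lemma~\ref{lem:parameter-compatibility}, and let $x\in\mathcal X_k$ with $k\le\lceil L_0\rceil$. Set $S=S_k=k+2\ge1+L(x)$. Lemma~\ref{lem:hamming} supplies the data $(r_0,x^{(r_0)},\delta)$, of which there are at most $m2^{n+m-1}$ choices. Write $D$ for the diagonal sign matrix of $x^{(r_0)}$. Then $\Phi(x)=\pm D^{\otimes m}\Phi(\widetilde x)$, and $D^{\otimes m}$ is an orthogonal map of $\operatorname{Sym}^m(\R^n)$. Now fix the exceptional count $q\le Q_0S_k^3$ (Lemma~\ref{lem:exceptional-degree}) and the regular direction $e_{\mathcal R}$; the latter has at most $M_k\le e^{Cn\log(e+k)}$ possibilities by Lemma~\ref{lem:moving-anchor-entropy}. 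By Lemma~\ref{lem:regular-sum-anchor} and Lemma~\ref{lem:joint-regular-exceptional}, together with slot invariance, $\Phi(\widetilde x)$ belongs to the fixed-anchor class of Lemma~\ref{lem:fixed-anchor-joint-width}. The hypotheses of that lemma hold by Lemma~\ref{lem:parameter-compatibility}. Since Gaussian width is invariant under orthogonal maps and under sign, each piece $\mathcal P$ has $w(\mathcal P)\le C\sqrt n$. Moreover, every element of a piece attached to stratum $k$ has norm at most $R_k:=e^{-k/2}$, because $\|\Phi(x)\|_2^2=\alpha(x)\le e^{-k}$.

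\emph{Step 3: the weighted union (main obstacle).} Applying Lemma~\ref{lem:width-union} once, with $R_0=1$ and the total number of pieces, would lose a factor $\sqrt{\log(e+L_0)}$. Applying it stratum by stratum and summing would lose a factor $L_0$. I therefore intend to redo its proof with stratum-dependent radii. Let $F_{\mathcal P}(g):=\sup_{u\in\mathcal P}|\langle g,u\rangle|$; it is $R_k$-Lipschitz with mean at most $C\sqrt n$. Gaussian concentration, followed by a union bound over the at most
\[
 N_k\le m\,2^{n+m}(Q_0S_k^3+1)\,e^{Cn\log(e+k)}
\]
pieces of stratum $k$ and then over $k$, gives
\[
 \int\max_{\mathcal P}F_{\mathcal P}\,d\gamma
 \le C\sqrt n+C\sup_k R_k\sqrt{\log N_k+\log(k+2)}+C .
\]
This uses the tail $\sum_k\sum_{\mathcal P}\Pr\bigl(F_{\mathcal P}>C\sqrt n+R_k(s_k+t)\bigr)$ with $s_k:=\sqrt{2\log(N_k(k+2)^2)}$, integrated in $t$. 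Since $m\le en$, we have $\log N_k\le Cn\log(e+k)$, and therefore
\[
 \sup_k e^{-k/2}\sqrt{n\log(e+k)}\le C\sqrt n .
\]
Combined with Step 1, this yields \eqref{eq:low-loss-final}.

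The delicate points are exactly this summation over strata, and checking that the Hamming normalization (the map $D$, which depends on the anchor) only contributes the $2^n$ entropy factor and an orthogonal transformation, never a change in the width bound.
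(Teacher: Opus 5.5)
Your proof is correct. Steps~1 and~2 match the paper's argument:
\begin{itemize}
\item the reduction through Lemma~\ref{lem:increment-comparison} and Lemma~\ref{lem:comparison-principles} on the signed index set;
\item homogeneity $\Psi=n^{m/2}\Phi$;
\item the covering of each stratum by $\pm D^{\otimes m}$-images of fixed-anchor classes, indexed by $q$ and the regular sum direction.
\end{itemize}
The only divergence is Step~3, and there the obstacle is milder than you suggest.

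The paper does not need to redo the union lemma; it simply applies Lemma~\ref{lem:width-union} twice, nested.
\begin{itemize}
\item Within stratum $k$ it uses radius $e^{-k/2}$ and $N_k$ pieces. This gives $w(\mathfrak T_k)\le C\sqrt n+Ce^{-k/2}\sqrt{n\log(e+k)}\le C\sqrt n$.
\item It then applies the same lemma to the $J=1+\lceil L_0\rceil$ sets $\mathfrak T_k$ with $R_0=1$.
\end{itemize}
The outer union costs only the additive term $\sqrt{2\log J}=O(\sqrt{\log\log\log m})$, not a multiplicative factor. This term is absorbed into $C\sqrt n$ because $n\ge m/e$. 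You correctly observed that one flat union loses $\sqrt{\log(e+L_0)}$ and that summing widths loses $L_0$, but you missed this nested option.

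Your weighted union bound is also valid. With thresholds $C\sqrt n+R_k(s_k+t)$ and $s_k=\sqrt{2\log(N_k(k+2)^2)}$, the inequality $(s_k+t)^2\ge s_k^2+t^2$ gives a failure probability at most $\sum_{k\ge0}(k+2)^{-2}e^{-t^2/2}\le e^{-t^2/2}$. This yields the same $C\sqrt n$ bound. Its mild advantage is that it is insensitive to the number of strata, so nothing like $\sqrt{\log J}$ has to be absorbed. The price is re-deriving the concentration argument instead of citing the lemma.

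Your count $m2^{n+m}$ of Hamming data overcounts: only $D$ and the product $\prod_j\delta_j$ affect the piece, not $r_0$ or the individual signs. This is harmless, since $\log(m2^{n+m})=O(n)$ when $m\le en$.
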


\begin{proof}
Cover the low-loss region by the strata
\begin{equation}\label{eq:low-loss-strata}
 e^{-(k+1)}<\alpha(x)\le e^{-k},
 \qquad
 0\le k\le\lceil L_0\rceil,
\end{equation}
and put $S_k:=k+2$.  On the $k$th stratum, $L(x)<k+1$.
We perform the regular--exceptional decomposition with the common scale
\[
S=S_k=k+2,
\]
which is admissible by~\eqref{eq:refinement-scale}.  Uniformly for
$0\le k\le\lceil L_0\rceil$, Lemma~\ref{lem:parameter-compatibility}
gives the large-degree condition.  Lemma~\ref{lem:exceptional-degree} first gives
\[
 q\le Q_0S_k^3.
\]
Applying the compatibility lemma to this $q$ yields
\[
 d=m-q\ge\frac m2,
 \qquad
 d\ge C_*S_k^2,
 \qquad
 (q+1)\sqrt{\log(em)}\le c_*\sqrt d.
\]
Lemma~\ref{lem:regular-sum-anchor} and then
Lemma~\ref{lem:fixed-anchor-joint-width} therefore apply throughout the
stratum.  The latter bounds every class with fixed sum anchor and fixed
exceptional degree by $C\sqrt n$ in the normalized symmetric Hilbert space.  Notice that
the exceptional vectors themselves are allowed to vary freely in this
class.

\begingroup
For each
$0\le q\le\lfloor Q_0S_k^3\rfloor$, let $\mathcal A_{k,q}$ be the set of
regular sum directions that arise after the Hamming normalization on the
$k$th stratum.  For $e\in\mathcal A_{k,q}$, let
$\mathfrak S_{k,q,e}$ denote the set of normalized symmetrized products
arising from Hamming-normalized configurations in that stratum with exceptional
degree $q$ and regular sum anchor $e$.  By Lemma~\ref{lem:slot-invariance}, symmetrization makes the choice of
the exceptional positions immaterial. Lemma~\ref{lem:fixed-anchor-joint-width} gives
\[
 w(\mathfrak S_{k,q,e})\le C\sqrt n,
\]
and \eqref{eq:alpha} gives
\[
 \sup_{u\in\mathfrak S_{k,q,e}}\|u\|_2\le e^{-k/2}.
\]
By Lemma~\ref{lem:moving-anchor-entropy}, uniformly in $q$,
\begin{equation}\label{eq:moving-anchor-count}
 |\mathcal A_{k,q}|\le M_k\le\exp\{Cn\log(e+k)\}.
\end{equation}

To undo the Hamming normalization, let $\mathcal D_n$ be the set of the
$2^n$ diagonal sign matrices on $\mathbb R^n$.  If
$\widetilde y_j=\delta_jDy_j$, where $D$ is the diagonal sign matrix
determined by the cube anchor, then
\[
 P_{\sym}^{(m)}(y_1\otimes\cdots\otimes y_m)
 =\left(\prod_{j=1}^m\delta_j\right)
 D^{\otimes m}
 P_{\sym}^{(m)}(\widetilde y_1\otimes\cdots\otimes\widetilde y_m).
\]
Let $\mathfrak T_k$ be the set of normalized symmetrized products in the $k$th
stratum.  Since the distinguished anchor slot is immaterial after
symmetrization and the scalar $\prod_j\delta_j$ is absorbed by adjoining the
negative of each class, we have the explicit covering
\[
 \mathfrak T_k
 \subset
 \bigcup_{q=0}^{\lfloor Q_0S_k^3\rfloor}
 \ \bigcup_{e\in\mathcal A_{k,q}}
 \ \bigcup_{D\in\mathcal D_n}
 D^{\otimes m}
 \bigl(\mathfrak S_{k,q,e}\cup(-\mathfrak S_{k,q,e})\bigr).
\]
The maps $D^{\otimes m}$ are orthogonal, and adjoining the negative of a set
does not change its absolute Gaussian width or its Hilbert radius.  Therefore
every class in this covering has width at most $C\sqrt n$ and radius at most
$e^{-k/2}$.  The number $N_k$ of sets in the covering satisfies
\[
 N_k\le M_k\bigl(1+\lfloor Q_0S_k^3\rfloor\bigr)2^n.
\]
\endgroup
Using \eqref{eq:moving-anchor-count} and $S_k=k+2$, we obtain
\begin{align*}
 \log N_k
 &\le Cn\log(e+k)
   +\log\bigl(1+Q_0(k+2)^3\bigr)+n\log2\\
 &\le Cn\log(e+k),
\end{align*}
where the last constant is absolute because $n\ge1$ and
$\log(1+Q_0(k+2)^3)\le C\log(e+k)$.
\begingroup
\color{black}
Each member of the union has width at most $C\sqrt n$, Hilbert radius at most $e^{-k/2}$, and the union has $N_k$ members.  Hence Lemma~\ref{lem:width-union} gives
\endgroup
\begin{align}
 w(\mathfrak T_k)
 &\le C\sqrt n+e^{-k/2}\sqrt{2\log N_k}\notag\\
 &\le C\sqrt n
    +Ce^{-k/2}\sqrt{n\log(e+k)}.
 \label{eq:one-stratum-width-before-absorption}
\end{align}
The second term in \eqref{eq:one-stratum-width-before-absorption} is also $O(\sqrt n)$ uniformly in $k$.  Indeed,
put $u=e+k\ge e$.  Then
\[
 e^{-k}\log(e+k)=e^e e^{-u}\log u,
\]
and the function $u\mapsto e^{-u}\log u$ is bounded on $[e,\infty)$.
Hence
\[
 e^{-k/2}\sqrt{\log(e+k)}\le C,
\]
and \eqref{eq:one-stratum-width-before-absorption} becomes
\begin{equation}\label{eq:one-stratum-open-width}
 w(\mathfrak T_k)\le C\sqrt n.
\end{equation}
Every element in these normalized strata has Hilbert norm at most one.  There are
$J:=1+\lceil L_0\rceil$ strata.  Applying
Lemma~\ref{lem:width-union} once more, now using
\eqref{eq:one-stratum-open-width}, gives
\begin{align}
 w\!\left(\bigcup_{0\le k\le\lceil L_0\rceil}\mathfrak T_k\right)
 &\le \max_{0\le k\le\lceil L_0\rceil}w(\mathfrak T_k)
      +\sqrt{2\log J}\notag\\
 &\le C\sqrt n+\sqrt{2\log(1+\lceil L_0\rceil)}.
 \label{eq:all-low-strata-width-before-absorption}
\end{align}
Since $L_0=O(\log\log m)$,
\[
 \log(1+\lceil L_0\rceil)=O(\log\log\log m).
\]
On the other hand $n\ge m/e$, so $\sqrt n\ge\sqrt{m/e}$.  Thus, after
enlarging the absolute constant to cover the finitely many small values of
$m$,
\[
 \sqrt{2\log(1+\lceil L_0\rceil)}\le C\sqrt n.
\]
Substitution in \eqref{eq:all-low-strata-width-before-absorption} proves
\begin{equation}\label{eq:all-low-strata-open-width}
 w\!\left(\bigcup_{0\le k\le\lceil L_0\rceil}\mathfrak T_k\right)
 \le C\sqrt n.
\end{equation}

Since
\[
 \Psi(x)=n^{m/2}\Phi(x),
\]
Gaussian width is homogeneous, and therefore
\begin{align}
 w\bigl(\{\Psi(x):L(x)\le L_0\}\bigr)
 &=n^{m/2}
   w\bigl(\{\Phi(x):L(x)\le L_0\}\bigr)\notag\\
 &\le n^{m/2}\,C\sqrt n\notag\\
 &=C n^{(m+1)/2}.
 \label{eq:Psi-low-width}
\end{align}
To pass back to the orbit-sign family, introduce
the signed index set
\[
 \mathcal I:=\{(x,\eta):L(x)\le L_0,\ \eta\in\{-1,1\}\}.
\]
The signed enlargement gives, explicitly,
\[
 \sup_{(x,\eta)\in\mathcal I}\langle g,\eta c(x)\rangle
 =\sup_{L(x)\le L_0}\max_{\eta\in\{-1,1\}}\eta\langle g,c(x)\rangle
 =\sup_{L(x)\le L_0}|\langle g,c(x)\rangle|.
\]
The same identity holds with $c(x)$ replaced by $\Psi(x)$.  Lemma~\ref{lem:increment-comparison} gives, for any two indices $(x,\eta)$ and
$(y,\theta)$,
\[
 \|\eta c(x)-\theta c(y)\|_2
 \le \sqrt{m!}\,
 \|\eta\Psi(x)-\theta\Psi(y)\|_2.
\]
Set
\[
 H_c:=\mathbb R^{\mathcal O_{m,n}},
 \qquad
 H_\Psi:=\operatorname{Sym}^m(\mathbb R^n).
\]
For $\xi=(x,\eta)$ and $\zeta=(y,\theta)$ in $\mathcal I$, define
\[
 X_\xi(g):=\langle g,\eta c(x)\rangle\quad(g\in H_c),
 \qquad
 Y_\xi(G):=\sqrt{m!}\,\langle G,\eta\Psi(x)\rangle\quad(G\in H_\Psi).
\]
The increment comparison is exactly the comparison of the corresponding
Gaussian $L^2$ metrics:
\begin{align*}
 \int_{H_c}|X_\xi-X_\zeta|^2\,d\gamma_{H_c}
 &=\|\eta c(x)-\theta c(y)\|_2^2\\
 &\le m!\,\|\eta\Psi(x)-\theta\Psi(y)\|_2^2\\
 &=\int_{H_\Psi}|Y_\xi-Y_\zeta|^2\,d\gamma_{H_\Psi},
\end{align*}
where the inequality is Lemma~\ref{lem:increment-comparison}.  Thus
Lemma~\ref{lem:comparison-principles}(i), applied on the signed index set
$\mathcal I$, gives
\begin{align}
 \int_{H_c} \sup_{L(x)\le L_0}|\langle g,c(x)\rangle|\,d\gamma_{H_c}(g)
 &\le \sqrt{m!}\,
 \int_{H_\Psi} \sup_{L(x)\le L_0}|\langle G,\Psi(x)\rangle|\,d\gamma_{H_\Psi}(G)\notag\\
 &\le C\sqrt{m!}\,n^{(m+1)/2},
 \label{eq:gaussian-orbit-low}
\end{align}
where the second line is \eqref{eq:Psi-low-width}.  Lemma~\ref{lem:comparison-principles}(ii) now gives, with its explicit comparison
factor $\sqrt{\pi/2}$,
\begin{align}
 \int \sup_{L(x)\le L_0}|A_{m,n}^{\varepsilon}(x)|\,d\mu(\varepsilon)
 &\le \sqrt{\frac{\pi}{2}}\,
 \int_{H_c} \sup_{L(x)\le L_0}|\langle g,c(x)\rangle|\,d\gamma_{H_c}(g)\notag\\
 &\le C\sqrt{m!}\,n^{(m+1)/2}.
 \label{eq:rademacher-low}
\end{align}
\begingroup
\color{black}
The bounds \eqref{eq:all-low-strata-open-width}, \eqref{eq:Psi-low-width}, and \eqref{eq:gaussian-orbit-low} combine at the scale
\[
 \sqrt n\, n^{m/2}\,\sqrt{m!}
 =\sqrt{m!}\,n^{(m+1)/2}.
\]
\endgroup
Equation~\eqref{eq:rademacher-low} is exactly \eqref{eq:low-loss-final}.
\end{proof}

\begin{corollary}
\label{cor:improved-global-upper}
There is an absolute constant $C_0>0$ such that, for every $m\ge3$,
\begin{equation}\label{eq:improved-global-upper}
 C_m^{\sym}\le C_0\sqrt{m!}.
\end{equation}
\end{corollary}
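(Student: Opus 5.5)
The plan is a probabilistic construction: combine the low-loss estimate of Proposition~\ref{prop:regular-exceptional-low-loss} with the permanent tail of Lemma~\ref{lem:permanent-tail}, and then dispose of the remaining cases (small $m$ and small $n$) with the elementary bounds from the introduction.

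\textbf{Case $m\ge m_0$ and $n\ge m/e$.} Fix $m\ge m_0$, where $m_0$ is taken large enough for Lemma~\ref{lem:parameter-compatibility} and Proposition~\ref{prop:regular-exceptional-low-loss} to apply, and set $L_0=2T_m+8$. Lemma~\ref{lem:parameter-compatibility} gives $1\le L_0\le (m-3)/2$. Since the norm of a real multilinear form on $(\ell_\infty^n)^m$ is attained on $\mathfrak X_{m,n}$, for every sign vector $\varepsilon$ we have
\[
 \|A^{\varepsilon}_{m,n}\|\le \sup_{L(x)\le L_0}|A^{\varepsilon}_{m,n}(x)|+\sup_{L(x)\ge L_0}|A^{\varepsilon}_{m,n}(x)|.
\]
We integrate this in $\varepsilon$.
\begin{itemize}
\item The first term is at most $C\sqrt{m!}\,n^{(m+1)/2}$ by Proposition~\ref{prop:regular-exceptional-low-loss}.
\item The second term is at most $Ke^{-L_0/2}\sqrt{1+(L_0+1)\log(em/(L_0+1))}\,\sqrt{m!}\,n^{(m+1)/2}$ by Lemma~\ref{lem:permanent-tail}.
\end{itemize}
For the tail, note that $e^{-L_0/2}=e^{-4}/\log(m+e^e)$, while the square root is $O(\sqrt{\log\log m\cdot\log m})=O(\log m)$. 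The product is therefore bounded by an absolute constant. This choice of $L_0$ is exactly what makes the tail absolutely bounded; checking it is the main numerical point. Hence $\int\|A^\varepsilon_{m,n}\|\,d\mu\le C\sqrt{m!}\,n^{(m+1)/2}$, so some choice of orbit signs, which is a symmetric unimodular form, achieves this bound.

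\textbf{Case $m\ge m_0$ and $n<m/e$.} Here I would not use the random construction and instead use the trivial bound $\|A\|\le n^m$ for any sign form. Then
\[
 \frac{n^m}{n^{(m+1)/2}}=n^{(m-1)/2}\le (m/e)^{(m-1)/2}.
\]
By Stirling's formula, $\sqrt{m!}\ge (m/e)^{m/2}$, which already dominates $(m/e)^{(m-1)/2}$. So this case holds with constant $1$.

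\textbf{Case $3\le m<m_0$.} These finitely many degrees are covered by the elementary bound $C_m^{\sym}\le\sqrt{2m\log 2}\,\sqrt{m!}$ from the introduction. Since $m<m_0$, the factor $\sqrt{2m\log2}$ is at most $\sqrt{2m_0\log 2}$, an absolute constant.

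Taking the supremum over $n$ in each case and letting $C_0$ be the largest of the three constants gives \eqref{eq:improved-global-upper}. The only genuine obstacle is confirming that the tail factor at $L_0=2T_m+8$ is uniformly bounded, and the estimate above shows that it is.
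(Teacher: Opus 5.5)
Your proposal is correct and follows the paper's proof essentially step by step: the same split of the cube configurations at $L_0=2T_m+8$, Proposition~\ref{prop:regular-exceptional-low-loss} plus Lemma~\ref{lem:permanent-tail} with the tail coefficient bounded via $e^{-L_0/2}=e^{-4}/\log(m+e^e)$, and the all-ones form when $n<m/e$. In that last step the comparison $(m/e)^{(m-1)/2}\le(m/e)^{m/2}$ silently uses $m/e\ge1$, which holds since $m\ge3$. The only deviation is that for $3\le m<m_0$ you use the elementary real-cube bound $\sqrt{2m\log2}\,\sqrt{m!}$ instead of Boas's estimate; this works equally well and keeps the argument self-contained.
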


\begin{proof}
Assume first that $m\ge m_0$ and $n\ge m/e$.
\begingroup
\color{black}
Since $\{L(x)<L_0\}\subset\{L(x)\le L_0\}$,
Proposition~\ref{prop:regular-exceptional-low-loss} controls the first region.
On the complementary region $L(x)\ge L_0$,
Lemma~\ref{lem:parameter-compatibility} gives
$1\le L_0\le(m-3)/2$, and Lemma~\ref{lem:permanent-tail} gives
\endgroup
\begin{align}
 &\int \sup_{L(x)\ge L_0}|A_{m,n}^{\varepsilon}(x)|\,d\mu(\varepsilon)\notag\\
 &\qquad\le
 C e^{-L_0/2}
 \sqrt{
 1+(L_0+1)\log\frac{em}{L_0+1}
 }
 \sqrt{m!}\,n^{(m+1)/2}.
 \label{eq:tail-at-L0}
\end{align}
The coefficient is uniformly bounded.  Since
\[
 e^{-L_0/2}=e^{-4}e^{-T_m},
 \qquad
 e^{T_m}=\log(m+e^e),
\]
and $T_m\ge1$, we have
\[
 L_0+1=2T_m+9\le11T_m.
\]
Moreover,
\[
 \log\frac{em}{L_0+1}
 \le 1+\log(m+e^e)
 =1+e^{T_m}
 \le2e^{T_m}.
\]
Therefore
\begin{align}
 1+(L_0+1)\log\frac{em}{L_0+1}
 &\le 1+22T_m e^{T_m}
 \le C T_m e^{T_m},
 \label{eq:tail-coefficient-inside}
\end{align}
and hence
\begin{align}
 e^{-L_0/2}
 \sqrt{1+(L_0+1)\log\frac{em}{L_0+1}}
 &\le
 C e^{-T_m}\sqrt{T_m e^{T_m}}\notag\\
 &=C\sqrt{T_m}\,e^{-T_m/2}
 \le C.
 \label{eq:tail-coefficient-bound}
\end{align}
\begingroup
\color{black}
Since $T_m\to\infty$ as $m\to\infty$, the scalar coefficient in
\eqref{eq:tail-at-L0} is in fact
\[
 O\!\left(\sqrt{T_m}e^{-T_m/2}\right)=o(1).
\]
\endgroup

\begingroup
\color{black}
Write the exact partition
\[
 \Omega_0:=\{x:L(x)<L_0\},
 \qquad
 \Omega_1:=\{x:L(x)\ge L_0\}.
\]
\endgroup
For every choice of the orbit signs,
\[
 \sup_x|A_{m,n}^{\varepsilon}(x)|
 \le
 \sup_{x\in\Omega_0}|A_{m,n}^{\varepsilon}(x)|
 +\sup_{x\in\Omega_1}|A_{m,n}^{\varepsilon}(x)|.
\]
Integrating with respect to the Rademacher signs, applying
Proposition~\ref{prop:regular-exceptional-low-loss} to the first term and
\eqref{eq:tail-at-L0} to the second, and using \eqref{eq:tail-coefficient-bound} for its scalar coefficient, gives
\begin{align*}
 \int \sup_x|A_{m,n}^{\varepsilon}(x)|\,d\mu(\varepsilon)
 &\le
 C_1\sqrt{m!}\,n^{(m+1)/2}
 +C_2\sqrt{m!}\,n^{(m+1)/2}\\
 &=(C_1+C_2)\sqrt{m!}\,n^{(m+1)/2}.
\end{align*}
Since the normalized integral of a nonnegative function is at least its minimum, there is a choice of the orbit signs for which
\[
 \sup_x|A_{m,n}^{\varepsilon}(x)|
 \le (C_1+C_2)\sqrt{m!}\,n^{(m+1)/2}.
\]

If $n<m/e$, take the symmetric form with every coefficient equal to $1$.
Its norm is $n^m$, and therefore
\[
 \frac{n^m}{n^{(m+1)/2}}
 =
 n^{(m-1)/2}
 \le
 \left(\frac me\right)^{(m-1)/2}.
\]
Also,
\[
 \log(m!)
 =\sum_{j=1}^m\log j
 \ge\int_1^m\log x\,dx
 =m\log m-m+1,
\]
so $m!\ge e(m/e)^m$.  Since $m/e>1$ for $m\ge3$,
\[
 \left(\frac me\right)^{(m-1)/2}
 \le \left(\frac me\right)^{m/2}
 \le \sqrt{m!}.
\]
For the finitely many degrees $3\le m<m_0$, Boas's endpoint estimate
\cite[Theorem~4]{Boas}, recorded in \eqref{eq:boas-intro}, applies uniformly
in the dimension.  Enlarging the absolute constant $C_0$ completes the proof.
\end{proof}

\begin{proof}[Proof of Theorem~\ref{thm:main}]
Corollary~\ref{cor:improved-global-upper} and Proposition~\ref{prop:lower}
give
\[
 \left(\sqrt{\frac2e}+o(1)\right)\frac{\sqrt{m!}}m
 \le C_m^{\sym}\le C_0\sqrt{m!}.
\]
Taking logarithms,
\[
 \frac12\log(m!)-\log m+O(1)
 \le \log C_m^{\sym}
 \le \frac12\log(m!)+O(1),
\]
and therefore
\[
 \log C_m^{\sym}=\frac12\log(m!)+O(\log m).
\]
Stirling's formula, in the form
\[
 \frac1m\log(m!)=\log m-1+O\!\left(\frac{\log m}{m}\right),
\]
then yields
\[
 \frac1m\log C_m^{\sym}
 =\frac12(\log m-1)+o(1),
\]
which is equivalent to
\[
 (C_m^{\sym})^{1/m}\sim\sqrt{\frac me}.
\]
This proves \eqref{eq:main-log-numbered}.
\end{proof}

\begingroup
\color{black}
\section{A test of block restrictions}\label{sec:block-restrictions}

One may also identify variables only within prescribed blocks and apply
Parseval on the resulting product cube.
\begingroup
\color{black}
Retaining only the square-free Walsh level in each block does not improve the asymptotic factor $m^{-1}$ furnished by the full diagonal.
\endgroup

Let
\[
 m=b_1+\cdots+b_k,
 \qquad b_j\ge1.
\]
Partition the $m$ slots into blocks of sizes $b_1,\ldots,b_k$ and define
\[
 Q:(\R^n)^k\longrightarrow\R,
 \qquad
 Q(y^{(1)},\ldots,y^{(k)})
 :=
 A_{m,n}\bigl(
 \underbrace{y^{(1)},\ldots,y^{(1)}}_{b_1},
 \ldots,
 \underbrace{y^{(k)},\ldots,y^{(k)}}_{b_k}
 \bigr)
\]
on $(\{-1,1\}^n)^k$.

\begin{lemma}\label{lem:block-squarefree}
Let $E_j\subset\{1,\ldots,n\}$ with $|E_j|=b_j$ for
$1\le j\le k$.  The Walsh coefficient of $Q$ associated with
\[
 \chi_{E_1}(y^{(1)})\cdots\chi_{E_k}(y^{(k)})
\]
has modulus
\begin{equation}\label{eq:block-coefficient}
 \prod_{j=1}^k b_j!.
\end{equation}
\end{lemma}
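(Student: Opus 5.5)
The plan is to follow the proof of Lemma~\ref{lem:squarefree} with one Walsh factor per block. First expand
\[
 Q(y^{(1)},\ldots,y^{(k)})=\sum_{i_1,\ldots,i_m=1}^n\varepsilon_{i_1,\ldots,i_m}\prod_{r=1}^m y^{(\beta(r))}_{i_r},
\]
where $\beta(r)\in\{1,\ldots,k\}$ denotes the block containing slot $r$. Group the factors of each monomial by block: block $j$ contributes the monomial $\prod_{r\in\beta^{-1}(j)}y^{(j)}_{i_r}$ of degree $b_j$ in the variables $y^{(j)}$. The characters $\chi_{E_1}(y^{(1)})\cdots\chi_{E_k}(y^{(k)})$ are the Walsh characters of the product cube $(\{-1,1\}^n)^k$ and are orthonormal. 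Hence a monomial has nonzero correlation with this character exactly when, for each $j$, the indices of block $j$ that occur an odd number of times form $E_j$. In that case the correlation equals $1$, because each even multiplicity contributes $y^2=1$.

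Since $|E_j|=b_j$ equals the degree of block $j$, the same counting as in Lemma~\ref{lem:squarefree} applies blockwise. Every element of $E_j$ must appear exactly once among the $b_j$ slots of block $j$, and no other index can appear there. Therefore the contributing index tuples $(i_1,\ldots,i_m)$ are exactly those whose restriction to block $j$ is a bijection onto $E_j$, for every $j$. There are $\prod_j b_j!$ such tuples.

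It remains to see that all these tuples carry the same sign. Fix one of them, say $(i^*_1,\ldots,i^*_m)$. Every other contributing tuple is obtained from it by permuting positions within each block, so it lies in the $S_m$-orbit of the fixed tuple. Symmetry of the coefficients gives $\varepsilon_{i_1,\ldots,i_m}=\varepsilon_{i^*_1,\ldots,i^*_m}$ for all of them. The Walsh coefficient is therefore
\[
 \Bigl(\prod_{j=1}^k b_j!\Bigr)\varepsilon_{i^*_1,\ldots,i^*_m},
\]
which has modulus $\prod_j b_j!$, proving \eqref{eq:block-coefficient}.

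The only point needing care is the orthogonality step. Because the character factorizes over blocks and the monomials factor as products over blocks, the coefficient computation decouples into $k$ independent one-cube computations. The rest is routine.
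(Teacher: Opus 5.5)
Your proposal is correct and follows the paper's own argument. You apply the square-free counting blockwise to get $\prod_j b_j!$ contributing ordered tuples, and then use symmetry to conclude that all of them, being related by within-block permutations and hence lying in one $S_m$-orbit, carry the same sign. Your explicit orthogonality bookkeeping on the product cube only spells out what the paper leaves implicit.
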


\begin{proof}
In the $j$th block, a square-free character of degree $b_j$ can only arise
when each coordinate of $E_j$ occurs once.  There are $b_j!$ orderings in
that block.  The orderings in different blocks may be chosen independently,
so the prescribed product character receives $\prod_jb_j!$ contributions.
All corresponding ordered tuples have the same combined multiset of indices.
Symmetry therefore gives them the same unimodular coefficient, which proves
\eqref{eq:block-coefficient}.
\end{proof}

\begin{proposition}\label{prop:block-walsh}
For every partition $m=b_1+\cdots+b_k$ and every
$n\ge\max_jb_j$,
\begin{equation}\label{eq:block-walsh}
 C_m^{\sym}
 \ge
 \frac{
 \displaystyle
 \prod_{j=1}^k b_j!\,
 \sqrt{\prod_{j=1}^k\binom n{b_j}}
 }{
 n^{(m+1)/2}
 }.
\end{equation}
\end{proposition}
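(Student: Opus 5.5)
The argument mirrors the proof of Proposition~\ref{prop:parseval}, with the diagonal replaced by the block restriction $Q$ and the cube $\{-1,1\}^n$ replaced by the product cube $(\{-1,1\}^n)^k$. Let $A_{m,n}$ be any real symmetric unimodular $m$-linear form. The first observation is that the norm of $A_{m,n}$ dominates the supremum of $Q$ on the product cube. Indeed, every point $(y^{(1)},\ldots,y^{(k)})\in(\{-1,1\}^n)^k$ yields an admissible input $(y^{(1)},\ldots,y^{(1)},\ldots,y^{(k)},\ldots,y^{(k)})$ whose entries have sup-norm at most $1$. Hence
\[
 \|A_{m,n}\|\ge\max_{(\{-1,1\}^n)^k}|Q|.
\]

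Next, the maximum dominates the $L^2$ norm with respect to the normalized counting measure on $(\{-1,1\}^n)^k=\{-1,1\}^{nk}$. The product characters $\chi_{E_1}(y^{(1)})\cdots\chi_{E_k}(y^{(k)})$ are exactly the Walsh characters of this larger cube, and they are indexed by $k$-tuples $(E_1,\ldots,E_k)$ of subsets of $\{1,\ldots,n\}$. These characters form an orthonormal basis, so Parseval gives
\[
 2^{-nk}\sum|Q|^2=\sum_{(E_1,\ldots,E_k)}|\widehat Q(E_1,\ldots,E_k)|^2.
\]
We discard all terms except those with $|E_j|=b_j$ for every $j$. By Lemma~\ref{lem:block-squarefree}, each retained coefficient has modulus $\prod_j b_j!$. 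The number of retained tuples is $\prod_j\binom n{b_j}$, which is nonzero because $n\ge\max_j b_j$. This yields
\[
 \max|Q|\ge\prod_j b_j!\,\sqrt{\prod_j\binom n{b_j}}.
\]

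This lower bound holds for every symmetric unimodular form. Minimizing over such forms therefore bounds $S^{\sym}_{m,n}$ from below by the same quantity. Dividing by $n^{(m+1)/2}$ and using the definition of $C_m^{\sym}$ as a supremum over $n$ gives \eqref{eq:block-walsh}.

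The only point that needs care is the justification of Lemma~\ref{lem:block-squarefree}: one must check that no other monomial of $Q$ contributes to the chosen product character. A monomial of $Q$ has degree exactly $b_j$ in the block variable $y^{(j)}$. The character $\chi_{E_j}$ with $|E_j|=b_j$ requires odd multiplicity at each of the $b_j$ points of $E_j$ and even multiplicity elsewhere. These two constraints together force each index of $E_j$ to occur exactly once in block $j$, and no other index to occur there. This square-free matching is precisely what the lemma records, and granting the lemma the remaining steps are routine.
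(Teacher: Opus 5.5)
Your proposal is correct and follows the paper's proof essentially verbatim: Parseval on the product cube $(\{-1,1\}^n)^k$, retaining only the characters $\chi_{E_1}\cdots\chi_{E_k}$ with $|E_j|=b_j$, whose coefficients have modulus $\prod_j b_j!$ by Lemma~\ref{lem:block-squarefree}, then $\|A_{m,n}\|\ge\|Q\|_\infty\ge\|Q\|_2$, division by $n^{(m+1)/2}$, and minimization over symmetric forms. Your closing check, that only square-free block monomials contribute to the retained characters, is exactly the content of that lemma.
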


\begin{proof}
Parseval on the product cube, with normalized counting measure and
restricted to the characters in Lemma~\ref{lem:block-squarefree}, gives
\[
 \int |Q|^2\,d\mu
 \ge
 \left(\prod_{j=1}^k b_j!\right)^2
 \prod_{j=1}^k\binom n{b_j}.
\]
Since the restriction is taken on cube vertices,
\[
 \|A_{m,n}\|\ge\|Q\|_\infty\ge\|Q\|_2.
\]
Divide by $n^{(m+1)/2}$ and minimize over symmetric forms.
\end{proof}

Could a judicious choice of blocks improve the diagonal estimate?  At the
level of the square-free Walsh spectrum, the answer is no.

\begin{proposition}\label{prop:block-optimal}
For a partition $b=(b_1,\ldots,b_k)$, set
\[
 S(b):=\sum_{j=1}^k b_j(b_j-1).
\]
If $S(b)>0$, then
\begin{equation}\label{eq:block-certificate-upper}
 \sup_{n\ge\max_j b_j}
 \frac{
 \displaystyle
 \prod_{j=1}^k b_j!\,
 \sqrt{\prod_{j=1}^k\binom n{b_j}}
 }{
 n^{(m+1)/2}
 }
 \le
 \sqrt{\frac2e}\,
 \sqrt{\frac{\prod_{j=1}^k b_j!}{S(b)}}
 \le
 \sqrt{\frac2e}\,
 \sqrt{\frac{m!}{m(m-1)}}.
\end{equation}
If $S(b)=0$, then every $b_j=1$ and the corresponding certificate is at
most $1$.  Consequently, the one-block partition $b_1=m$ is
asymptotically optimal among all block Walsh restrictions.
\end{proposition}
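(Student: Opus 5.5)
Write the certificate as
\[
 F_b(n):=\frac{\prod_j b_j!\,\sqrt{\prod_j\binom n{b_j}}}{n^{(m+1)/2}}
 =\sqrt{\prod_j b_j!}\;\frac{1}{\sqrt n}\Bigl[\prod_{j=1}^k\prod_{i=0}^{b_j-1}\Bigl(1-\frac in\Bigr)\Bigr]^{1/2},
\]
using $\binom n{b_j}=\frac{n^{b_j}}{b_j!}\prod_{i<b_j}(1-i/n)$ exactly as in the proof of Proposition~\ref{prop:lower}. The plan is to bound the double product by an exponential and then optimize the resulting one-variable function over $n>0$ (dropping integrality, which only enlarges the supremum).

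For the first step, use $1-t\le e^{-t}$ for $t\in[0,1]$, which is valid since $i\le b_j-1<n$. This gives
\[
 \prod_j\prod_{i<b_j}\Bigl(1-\frac in\Bigr)\le\exp\Bigl\{-\frac1n\sum_j\frac{b_j(b_j-1)}2\Bigr\}=e^{-S(b)/(2n)}.
\]
Hence $F_b(n)\le\sqrt{\prod_j b_j!}\;n^{-1/2}e^{-S(b)/(4n)}$. Substitute $n=cS(b)$, which is legitimate when $S(b)>0$. Then
\[
 n^{-1/2}e^{-S/(4n)}=S^{-1/2}\,c^{-1/2}e^{-1/(4c)}=S^{-1/2}\varphi(c),
\]
and Lemma~\ref{lem:walsh-c-optimization} gives $\varphi(c)\le\sqrt{2/e}$. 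This proves the first inequality in \eqref{eq:block-certificate-upper}.

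For the second inequality, one must show $\prod_j b_j!/S(b)\le m!/(m(m-1))$, that is,
\[
 \frac{\prod_j b_j!}{\sum_j b_j(b_j-1)}\le (m-2)!.
\]
Choose $j_0$ with $b_{j_0}=\max_j b_j=:B\ge2$, which exists since $S>0$. Then $S(b)\ge B(B-1)$, so
\[
 \frac{\prod_j b_j!}{S(b)}\le (B-2)!\prod_{j\ne j_0}b_j!.
\]
The multinomial-type bound $(B-2)!\prod_{j\ne j_0}b_j!\le\bigl((B-2)+\sum_{j\ne j_0}b_j\bigr)!=(m-2)!$ then finishes the argument; it holds because $\binom{a+c}{a}\ge1$ iterated gives $a!\,c!\le(a+c)!$.

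When $S(b)=0$, all $b_j=1$, $k=m$, and
\[
 F_b(n)=n^{-(m+1)/2}\,n^{m/2}=n^{-1/2}\le1.
\]
The optimality statement follows by comparing with Proposition~\ref{prop:lower}. The one-block certificate attains $(\sqrt{2/e}+o(1))\sqrt{m!}/m$, while every other partition is bounded by
\[
 \sqrt{2/e}\,\sqrt{m!/(m(m-1))}=(\sqrt{2/e}+o(1))\sqrt{m!}/m,
\]
or by $1$, which is far smaller. The main point needing care is the factorial inequality in the second step; the rest is the same optimization already done for the diagonal.
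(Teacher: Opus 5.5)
Your proof is correct and follows essentially the same route as the paper. Both arguments rewrite $\binom n{b_j}$ as a falling product, bound $\prod_j\prod_{i<b_j}(1-i/n)\le e^{-S(b)/(2n)}$, and optimize over real $n$; you invoke Lemma~\ref{lem:walsh-c-optimization}, while the paper maximizes $t^{-1}e^{-S/(2t)}$ directly. For the factorial step, the paper expresses $\frac{m!}{\prod_j b_j!}\cdot\frac{S(b)}{m(m-1)}$ as a sum of multinomial coefficients $\frac{(m-2)!}{(b_j-2)!\prod_{i\ne j}b_i!}$ over $b_j\ge2$, and your bound $(B-2)!\prod_{j\ne j_0}b_j!\le(m-2)!$ is exactly the statement that one of these summands is at least $1$.
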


\begin{proof}
For $b\ge1$,
\[
 b!^2\binom nb
 =
 b!\,(n)_b
 =
 b!\,n^b
 \prod_{r=0}^{b-1}\left(1-\frac rn\right).
\]
The square of the expression in \eqref{eq:block-certificate-upper} is
therefore
\[
 \frac{\prod_{j=1}^k b_j!}{n}
 \prod_{j=1}^k\prod_{r=0}^{b_j-1}
 \left(1-\frac rn\right).
\]
Since $\log(1-t)\le-t$,
\[
 \prod_{j=1}^k\prod_{r=0}^{b_j-1}
 \left(1-\frac rn\right)
 \le
 \exp\left\{-\frac{S(b)}{2n}\right\}.
\]
Thus
\[
 \left[
 \frac{
 \displaystyle
 \prod_{j=1}^k b_j!\,
 \sqrt{\prod_{j=1}^k\binom n{b_j}}
 }{
 n^{(m+1)/2}
 }
 \right]^2
 \le
 \frac{\prod_jb_j!}{n}
 e^{-S(b)/(2n)}.
\]
Since $S=S(b)>0$, we may allow $n$ to range over all positive real numbers.
The function $t^{-1}e^{-S/(2t)}$ is maximized at $t=S/2$, with maximum
$2/(eS)$.
This proves the first inequality in \eqref{eq:block-certificate-upper}.

For the second, put
\[
 M:=\frac{m!}{\prod_{j=1}^k b_j!}.
\]
Then
\[
 \frac{M S(b)}{m(m-1)}
 =
 \sum_{\substack{1\le j\le k\\ b_j\ge2}}
 \frac{(m-2)!}{(b_j-2)!\prod_{i\ne j}b_i!}.
\]
Every nonzero summand is a positive integer multinomial coefficient.  If
$S(b)>0$, at least one occurs, and hence
\[
 \frac{M S(b)}{m(m-1)}\ge1.
\]
Equivalently,
\[
 \frac{\prod_jb_j!}{S(b)}
 \le
 \frac{m!}{m(m-1)}.
\]
If $S(b)=0$, all blocks have size one and \eqref{eq:block-walsh} reduces to
$n^{-1/2}\le1$.

Finally, for the one-block partition $b_1=m$, the calculation in the proof
of Proposition~\ref{prop:lower} gives
\[
 \sup_{n\ge m}
 \frac{m!\sqrt{\binom nm}}{n^{(m+1)/2}}
 \ge
 \left(\sqrt{\frac2e}+o(1)\right)\frac{\sqrt{m!}}m
 =
 \left(1+o(1)\right)
 \sqrt{\frac2e}\sqrt{\frac{m!}{m(m-1)}}.
\]
Together with \eqref{eq:block-certificate-upper}, this shows that the
one-block partition asymptotically attains the universal upper envelope.
\end{proof}

\begin{remark}[The block-Walsh barrier]\label{rem:block-barrier}
Proposition~\ref{prop:block-optimal} does not determine the true order of
$C_m^{\sym}/\sqrt{m!}$.  It determines the reach of a method: no
identification of the variables into blocks, followed by retention of the
square-free Walsh level in each block, can improve the factor $m^{-1}$.
A sharper lower bound must use a different portion of the Walsh spectrum or
a genuinely different obstruction.
\end{remark}

\begin{remark}[Relation with the complex diagonal argument]\label{rem:BK}
Boas and Khavinson
\cite[proof of Theorem~2 and Remark~2]{BoasKhavinson}
used the complete diagonal
coefficient family $|c_\alpha|=m!/\alpha!$ together with the torus
$L_2$ norm in the complex polynomial setting.  That argument does not
directly control the real cube norm considered here.  Proposition
\ref{prop:parseval} is its discrete real counterpart: it retains only the
square-free Walsh level, yet already detects the factorial scale after the
dimension is optimized.
\end{remark}

\endgroup

\section{Proof of Theorem~B}\label{sec:unrestricted-rate}

Recall that $h_+(n)$ is the least Hadamard order not smaller than $n$, and
write $d_+(n):=h_+(n)-n$.  The construction uses the following quantitative gap estimate.

\begin{proposition}\label{prop:hadamard-gap}
There is an absolute constant $K_{\mathrm H}>0$ such that
\begin{equation}\label{eq:hadamard-gap}
 d_+(n)\le K_{\mathrm H}n^{1/6}
 \qquad(n\ge1).
\end{equation}
\end{proposition}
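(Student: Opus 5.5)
We need to prove that Hadamard orders have gaps of size $O(n^{1/6})$. There is no theorem like this from scratch; the natural route is Paley constructions together with Sylvester doubling and prime gaps.

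The plan is to prove the gap bound by exhibiting, for every sufficiently large $n$, a Hadamard order in $[n,n+Kn^{1/6}]$; the finitely many small $n$ are absorbed into $K_{\mathrm H}$, since $1,2$ and all powers of $2$ are Hadamard orders, so $h_+(n)\le 2n$ always.

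The constructions I will use are classical. Paley~I gives order $p+1$ for every prime $p\equiv3\pmod4$. Paley~II gives order $2(p+1)$ for every prime $p\equiv1\pmod4$. Sylvester's Kronecker product with the $2\times2$ Hadamard matrix doubles any order. The cleanest route is Paley~I alone: it suffices to find a prime $p\equiv3\pmod4$ with $n-1\le p\le n-1+Kn^{1/6}$, since then $p+1\in\mathcal H$ and $h_+(n)-n\le p+1-n$.

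The key input is therefore a short-interval prime theorem in an arithmetic progression modulo $4$. The plan is to invoke a known result that, for $x$ large, every interval $[x,x+x^{\theta}]$ with a fixed $\theta$ contains a prime in each reduced residue class modulo $4$. By Huxley-type zero-density estimates for Dirichlet $L$-functions, this holds for any $\theta>7/12$, and Baker--Harman--Pintz type sieve refinements push it toward $\theta=0.525$.

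Here is the main obstacle: none of these exponents reach $1/6$. Even the Riemann hypothesis for Dirichlet $L$-functions only gives intervals of length $x^{1/2+o(1)}$. So the Paley~I approach cannot prove the stated exponent with present technology. I must instead exploit the multiplicative structure of $\mathcal H$.

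The first thing I would try is a product construction. Write the target order as $2^a(p+1)$ or as $(p_1+1)(p_2+1)$, using Kronecker products of Paley orders. If $n$ has size $x$, choose a factor near $x^{1/2}$ or $x^{1/3}$, and apply a short-interval prime result to each factor. An interval of relative length $x^{-5/6}$ in the product then corresponds to relative adjustments in the factors. With two factors of size $x^{1/2}$, perturbing one factor $q$ by $\Delta$ changes the product by $\Delta x^{1/2}$. Hence the question becomes whether the products $(p_1+1)(p_2+1)$, with $p_i\equiv3\pmod4$, are $x^{1/6}$-dense near $x$. This is a question about the density of such products, which I would attack via primes in short intervals for both factors plus a counting or pigeonhole argument.

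Concretely, for each prime $p_1\equiv3\pmod 4$ in $[x^{1/2},2x^{1/2}]$, the available $p_2$ are spaced at most $p_2^{0.525}$ apart. Products over many choices of $p_1$ interleave, and the gaps between consecutive products shrink. I would first check whether the number of such products, roughly $x/\log^2x$, makes an average gap of $\log^2 x$. That is plausible on average, but a worst-case gap bound is the hard part. I expect this step to require a precise citation of the result the authors rely on; if one exists, it would be a published theorem on Hadamard order gaps, which I would quote directly.
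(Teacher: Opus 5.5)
Your proposal does not prove the statement. You correctly note that Paley~I combined with primes in short intervals stops at exponent about $0.525$, and at $1/2$ even under GRH. But the replacement step is never carried out, and it is aimed in a direction that cannot succeed.

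\emph{Products of Paley orders.} For $2^a(p+1)$ with $2^a\approx x^{\beta}$, known prime-gap results only guarantee spacing of order $2^a$ times a prime gap at scale $x^{1-\beta}$. That is $x^{\beta+0.525(1-\beta)}\ge x^{0.525}$. For $(p_1+1)(p_2+1)$, the count $\asymp x/\log^2x$ controls only the \emph{average} gap, not the maximal one. Your own estimate shows this: moving $p_2$ to the next admissible prime shifts the product by roughly $x^{0.76}$. A worst-case bound of order $x^{1/6}$ for products of shifted primes in a progression would be a short-interval theorem far beyond anything available. Any construction whose only freedom comes from prime parameters inherits these barriers.

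\emph{The missing idea.} The exponent $1/6$ does not come from primes. It comes from asymptotic existence theorems for Hadamard matrices (Seberry, then Craigen's signed groups, refined by Livinskyi). These state that for \emph{every} odd $q$ there is a Hadamard matrix of order $2^tq$ once $t\ge\alpha\log_2q+c$. Since $q$ runs through all odd integers, at fixed $t$ these orders are spaced exactly $2^{t+1}$ apart. The argument then runs as follows:
\begin{itemize}
\item Take $2^t\asymp n^{\alpha/(1+\alpha)}$.
\item Let $q$ be the least odd integer with $2^tq\ge n$. Then $q\le n2^{-t}+2$, so the constraint $t\ge\alpha\log_2 q+c$ is met.
\item Hence $h_+(n)-n\le2^{t+1}=O(n^{\alpha/(1+\alpha)})$, which is $O(n^{1/6})$ when $\alpha=1/5$.
\end{itemize}
This is, in essence, how the cited source obtains its bound.

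\emph{What the paper actually does.} The paper does not rederive any of this. It quotes Brent--Osborn--Smith, who record $\gamma(H)=O(H^{1/6})$ for the gap between consecutive Hadamard orders, as a consequence of their Lemma~13 and Livinskyi's construction. It then observes that if $H\le n<H'$ are consecutive orders, $h_+(n)-n\le H'-H=O(H^{1/6})=O(n^{1/6})$, and absorbs small $n$ into $K_{\mathrm H}$.

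Your closing fallback, ``quote a published gap theorem'', is therefore exactly the paper's proof. As written, however, your proposal neither identifies that theorem nor supplies a substitute for it.
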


\begin{proof}
Brent, Osborn and Smith record
$\gamma(H)=O(H^{1/6})$ for the Hadamard gap function, as the consequence of
their Lemma~13 and Livinskyi's construction; see
\cite[\S~6, Lemma~13 and Eq.~(17)]{BrentOsbornSmithLong}.
If $H<H'$ are consecutive Hadamard orders and $H\le n<H'$, then
\[
 h_+(n)-n\le H'-H=O(H^{1/6})=O(n^{1/6}).
\]
This proves the estimate for all sufficiently large $n$; increasing the
constant accounts for the remaining values.
\end{proof}

The Hadamard chain of \cite[Lemma~2.1]{PellegrinoRaposo} can be
truncated directly in the original rectangular dimensions.  This avoids
enlarging the first variable and yields the product in Theorem~B.  Combining
the construction with the quantitative Hadamard gap in Proposition~\ref{prop:hadamard-gap} makes the
dependence on both the degree and the dimensions explicit.

\begin{lemma}\label{lem:block-ksz}
Let $m\ge2$ and $1\le n_1\le\cdots\le n_m$.  For each
$k=2,\ldots,m$, let $t_k\ge n_k$ be a Hadamard order.  There is a real sign
$m$-linear form
\[
 A:\ell_\infty^{n_1}\times\cdots\times\ell_\infty^{n_m}
 \longrightarrow\R
\]
such that
\begin{equation}\label{eq:block-est}
 \|A\|\le\sqrt{n_1n_m}\prod_{k=2}^m\sqrt{t_k}.
\end{equation}
\end{lemma}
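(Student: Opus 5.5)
We build $A$ by induction on the number of variables, following the Hadamard chain of \cite[Lemma~2.1]{PellegrinoRaposo}, but truncating each Hadamard matrix to the actual dimensions instead of enlarging them. The basic observation concerns an $N\times N$ Hadamard matrix $H$, so that $H^TH=NI$ and its $\ell_2\to\ell_2$ operator norm is $\sqrt N$. If $H'$ is any $a\times b$ submatrix with $a,b\le N$, then for $x\in[-1,1]^a$ and $y\in[-1,1]^b$ we have
\[
 |\langle H'y,x\rangle|\le\sqrt N\,\|x\|_2\|y\|_2\le\sqrt N\sqrt{ab}.
\]
The base case $m=2$ follows from this: take an $n_1\times n_2$ submatrix of a Hadamard matrix of order $t_2\ge n_2\ge n_1$, which gives $\|A\|\le\sqrt{t_2}\sqrt{n_1n_2}$, exactly \eqref{eq:block-est} for $m=2$.

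For the inductive step, the plan is to avoid inducting on the norm $\|\cdot\|$ itself. Instead we carry the stronger Hilbert-type statement
\[
 \Bigl\|\sum_{i_1,\ldots,i_{m-1}}\varepsilon_{i_1\cdots i_{m-1}i_m}x^{(1)}_{i_1}\cdots x^{(m-1)}_{i_{m-1}}\Bigr\|_{\ell_2(i_m)}\le \sqrt{n_1}\prod_{k=2}^{m}\sqrt{t_k}\,\prod_{k=2}^{m-1}\sqrt{n_k}
\]
for all cube vectors $x^{(1)},\ldots,x^{(m-1)}$, so that pairing with $x^{(m)}$ and applying Cauchy--Schwarz gives the extra factor $\sqrt{n_m}$.

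We construct this by a tensor (Kronecker-type) recursion. Given the $(m-1)$-fold sign array $\varepsilon'$ with output index $i_{m-1}\in\{1,\ldots,n_{m-1}\}$, define
\[
 \varepsilon_{i_1\cdots i_m}:=\varepsilon'_{i_1\cdots i_{m-1}}\,H^{(m)}_{i_{m-1}i_m},
\]
where $H^{(m)}$ is the $n_{m-1}\times n_m$ truncation of a Hadamard matrix of order $t_m\ge n_m\ge n_{m-1}$. The inner sum over $i_{m-1}$ is then $\sum_{i_{m-1}}x^{(m-1)}_{i_{m-1}}u_{i_{m-1}}H^{(m)}_{i_{m-1}i_m}$, where $u=u(x^{(1)},\ldots,x^{(m-2)})$ is the output vector of the previous stage. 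Since $|x^{(m-1)}_i|\le1$, the diagonal multiplication by $x^{(m-1)}$ does not increase the $\ell_2$ norm of $u$. The truncated Hadamard matrix then multiplies the $\ell_2$ norm by at most $\sqrt{t_m}$. Each stage therefore contributes a factor $\sqrt{t_k}$. The base output is $\|\sum_{i_1}x^{(1)}_{i_1}H^{(2)}_{i_1\cdot}\|_2\le\sqrt{t_2}\sqrt{n_1}$.

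Unwinding the recursion, the $\ell_2$ norm of the final output vector is at most $\sqrt{n_1}\prod_{k=2}^m\sqrt{t_k}$. Pairing with $x^{(m)}$ gives $\|A\|\le\sqrt{n_1n_m}\prod_{k=2}^m\sqrt{t_k}$. This is slightly stronger than the intermediate display above, because the dimensions $n_2,\ldots,n_{m-1}$ never enter: the diagonal multiplications are contractions.

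The only point needing care is that the truncations are compatible, that is, that the row index range of $H^{(k)}$ equals the column index range $n_{k-1}$ of $H^{(k-1)}$. This holds because $t_k\ge n_k\ge n_{k-1}$ by monotonicity. All entries are products of $\pm1$, so they are signs. I expect no real obstacle; the main thing to verify is that the bound $\sqrt{t_k}$ on the operator norm survives truncation, which holds because a submatrix of an operator has norm at most that of the operator.
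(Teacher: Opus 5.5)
Your proposal is correct and is essentially the paper's proof. It uses the same coefficients $\prod_{k=2}^m h^{(k)}_{i_{k-1},i_k}$ built from the $n_{k-1}\times n_k$ truncations of Hadamard matrices of orders $t_k$, with coordinatewise multiplication by $x^{(k)}$ acting as an $\ell_2$ contraction and each truncated block contributing $\sqrt{t_k}$. The only difference is the direction of the recursion: you propagate the $\ell_2$ bound forward from $x^{(1)}$ through the transposed blocks and pair with $x^{(m)}$ at the end, whereas the paper recurses backward from $x^{(m)}$ and pairs with $x^{(1)}$. Your intermediate display carrying $\prod_{k=2}^{m-1}\sqrt{n_k}$ is unnecessary, as you yourself observe.
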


\begin{proof}
For $k=2,\ldots,m$, choose a Hadamard matrix
$H^{(k)}=(h^{(k)}_{ij})_{1\le i,j\le t_k}$, and let $B^{(k)}$ be its
submatrix consisting of the first $n_{k-1}$ rows and first $n_k$ columns.
Zero extension followed by coordinate restriction gives
\begin{equation}\label{eq:rectangular-hadamard-operator}
 \|B^{(k)}\|_{2\to2}\le\|H^{(k)}\|_{2\to2}=\sqrt{t_k}.
\end{equation}
Define
\[
 A(x^{(1)},\ldots,x^{(m)})
 :=
 \sum_{i_1=1}^{n_1}\cdots\sum_{i_m=1}^{n_m}
 \left(\prod_{k=2}^m h^{(k)}_{i_{k-1},i_k}\right)
 x^{(1)}_{i_1}\cdots x^{(m)}_{i_m}.
\]
Every coefficient is a sign.

Fix $\|x^{(k)}\|_\infty\le1$.  Put $v^{(m)}=x^{(m)}$ and, for
$k=m-1,\ldots,2$, define
\[
 v^{(k)}_{i_k}
 :=x^{(k)}_{i_k}
 \sum_{i_{k+1}=1}^{n_{k+1}}
 h^{(k+1)}_{i_k,i_{k+1}}v^{(k+1)}_{i_{k+1}}.
\]
Coordinatewise multiplication by $x^{(k)}$ is contractive on $\ell_2$, so
\[
 \|v^{(k)}\|_2
 \le\sqrt{t_{k+1}}\,\|v^{(k+1)}\|_2
\]
by \eqref{eq:rectangular-hadamard-operator}.  At the first variable,
\begin{align*}
 |A(x^{(1)},\ldots,x^{(m)})|
 &=|\langle x^{(1)},B^{(2)}v^{(2)}\rangle|\\
 &\le\sqrt{n_1}\sqrt{t_2}\,\|v^{(2)}\|_2\\
 &\le\sqrt{n_1}
       \left(\prod_{k=2}^m\sqrt{t_k}\right)\|x^{(m)}\|_2\\
 &\le\sqrt{n_1n_m}\prod_{k=2}^m\sqrt{t_k}.
\end{align*}
The same computation includes $m=2$, when the intermediate recursion is
empty.
\end{proof}

\begin{proof}[Proof of Theorem~\ref{thm:unrestricted-summary}]
For $k=2,\ldots,m$, put $t_k:=h_+(n_k)$.  Lemma~\ref{lem:block-ksz} gives
\[
 S_{m,\mathbf n}
 \le\sqrt{n_1n_m}\prod_{k=2}^m\sqrt{h_+(n_k)}.
\]
Since
\[
 n_m^{1/2}\prod_{j=1}^m n_j^{1/2}
 =\sqrt{n_1n_m}\prod_{k=2}^m\sqrt{n_k},
\]
division yields the first inequality in
\eqref{eq:unrestricted-rectangular}.  Proposition~\ref{prop:hadamard-gap}
and $\log(1+u)\le u$ give
\begin{align*}
 \log\prod_{k=2}^m
 \left(\frac{h_+(n_k)}{n_k}\right)^{1/2}
 &\le\frac12\sum_{k=2}^m
 \log\bigl(1+K_{\mathrm H}n_k^{-5/6}\bigr)\\
 &\le\frac{K_{\mathrm H}}2\sum_{k=2}^m n_k^{-5/6}.
\end{align*}
This proves the second inequality and, in equal dimensions,
\eqref{eq:two-parameter-main}.
\end{proof}

\begin{proof}[Proof of Corollary~\ref{cor:price-symmetry}]
Put $q_m:=\lfloor m^2/2\rfloor$ and $N_m:=h_+(q_m)$.  By
Proposition~\ref{prop:hadamard-gap},
\[
 N_m-q_m=O(q_m^{1/6})=O(m^{1/3}),
\]
so $N_m=m^2/2+O(m^{1/3})$.  Since $N_m$ is a Hadamard order,
Lemma~\ref{lem:block-ksz} in equal dimensions gives
\[
 S_{m,N_m}\le N_m^{(m+1)/2}.
\]

For the symmetric problem, Proposition~\ref{prop:parseval} gives
\[
 \frac{S_{m,N_m}^{\sym}}{N_m^{(m+1)/2}}
 \ge
 \frac{\sqrt{m!}}{\sqrt{N_m}}
 \left[\prod_{k=0}^{m-1}
 \left(1-\frac{k}{N_m}\right)\right]^{1/2}.
\]
Here $N_m\sim m^2/2$ and $\max_{k<m}k/N_m=O(m^{-1})$, whence
\begin{align*}
 \sum_{k=0}^{m-1}\log\left(1-\frac{k}{N_m}\right)
 &=-\frac1{N_m}\sum_{k=0}^{m-1}k
   +O\left(\frac1{N_m^2}\sum_{k=0}^{m-1}k^2\right)\\
 &=-\frac{m(m-1)}{2N_m}+O\left(\frac{m^3}{N_m^2}\right)\\
 &=-1+O(m^{-1}).
\end{align*}
Since $N_m^{-1/2}=(\sqrt2+o(1))/m$, this proves
\eqref{eq:quadratic-symmetric-lower}.  Moreover,
$S_{m,N_m}\le N_m^{(m+1)/2}$, so the same lower bound holds after dividing
the symmetric minimum by the unrestricted one.  This is the final assertion
of the corollary.
\end{proof}

\section*{Acknowledgments and funding}

D. Pellegrino was supported by CNPq Grants No.~406457/2023-9
(CNPq/MCTI N\textsuperscript{o}~10/2023), No.~403964/2024-5
(MCTI/CNPq N\textsuperscript{o}~16/2024), and No.~305807/2025-0.
E. Teixeira was supported by the Grayce B. Kerr Chair funds at Oklahoma
State University.

This research was conducted in part under the DARPA ExpMath project
\emph{``A Human-Centered Framework for AI-Mathematician Collaboration in
Research-Level Mathematics''} (Agreement No.~HR0011262E029), in which
E.~Teixeira serves as a co-principal investigator.  He gratefully acknowledges
DARPA's support.
The views, opinions, and findings expressed here are those of the authors and
should not be interpreted as representing the official views or policies of
the Department of Defense or the U.S.\ Government.

The \textsc{Lea} Prover was used to check several proofs and their
dependencies, to compare alternative formulations, and to assist with
routine \LaTeX.  The authors independently verified its output and take
responsibility for the mathematical and expository content.

\section*{Data availability}

No datasets were generated or analyzed in this work.

\section*{Competing interests}

The authors declare that they have no competing interests.

\bigskip
\noindent\textsc{Daniel M. Pellegrino}\\
Departamento de Matem\'atica, Universidade Federal da Para\'iba,
Jo\~ao Pessoa, PB, Brazil\\
\texttt{dmpellegrino@gmail.com}

\medskip
\noindent\textsc{Anselmo B. Raposo Jr.}\\
Coordena\c{c}\~{a}o do Curso de Matem\'{a}tica -- Bacharelado, Universidade Federal do Maranh\~ao,
S\~ao Lu\'is, MA, Brazil\\
\texttt{anselmo.junior@ufma.br}

\medskip
\noindent\textsc{Eduardo V. Teixeira}\\
Department of Mathematics, Oklahoma State University,
Stillwater, OK 74078, USA\\
\texttt{eduardo.teixeira@okstate.edu}


\bigskip
\begin{thebibliography}{99}

\bibitem{AlbuquerqueRezende}
N. G. Albuquerque and L. Rezende,
\emph{Asymptotic estimates for unimodular multilinear forms with small norms on sequence spaces},
Bull. Braz. Math. Soc. (N.S.) \textbf{52} (2021), no.~1, 23--39.

\bibitem{Banach}
S. Banach,
\emph{\"Uber homogene Polynome in $(L^2)$},
Studia Math. \textbf{7} (1938), 36--44.

\bibitem{BennettGoodmanNewman}
G. Bennett, V. Goodman and C. M. Newman,
\emph{Norms of random matrices},
Pacific J. Math. \textbf{59} (1975), no.~2, 359--365.


\bibitem{Boas}
H. P. Boas,
\emph{Majorant series},
in \emph{Several complex variables (Seoul, 1998)},
J. Korean Math. Soc. \textbf{37} (2000), no.~2, 321--337.

\bibitem{BoasKhavinson}
H. P. Boas and D. Khavinson,
\emph{Bohr's power series theorem in several variables},
Proc. Amer. Math. Soc. \textbf{125} (1997), no.~10, 2975--2979.

\bibitem{BrentOsbornSmithLong}
R. P. Brent, J.-A. H. Osborn and W. D. Smith,
\emph{Lower bounds on maximal determinants of binary matrices via the
probabilistic method}, long version, arXiv:1402.6817v6 [math.CO] (2016),
37 pp.; see also \emph{Probabilistic lower bounds on maximal determinants of
binary matrices}, Australas. J. Combin. \textbf{66} (2016), no.~3, 350--364.

\bibitem{CarandoRodriguez}
D. Carando and J. T. Rodr\'iguez,
\emph{Symmetric multilinear forms on Hilbert spaces: where do they attain their norm?},
Linear Algebra Appl. \textbf{563} (2019), 178--192.

\bibitem{CarlenLiebLoss}
E. A. Carlen, E. H. Lieb and M. Loss,
\emph{An inequality of Hadamard type for permanents},
Methods Appl. Anal. \textbf{13} (2006), no.~1, 1--18.

\bibitem{DefantMastylo}
A. Defant and M. Masty\l o,
\emph{Aspects of the Kahane--Salem--Zygmund inequalities in Banach spaces},
Rev. R. Acad. Cienc. Exactas F\'is. Nat. Ser. A Mat. RACSAM \textbf{117} (2023), no.~1, Paper No.~44, 40 pp.



\bibitem{FriedlandWang}
S. Friedland and L. Wang,
\emph{Spectral norm of a symmetric tensor and its computation},
Math. Comp. \textbf{89} (2020), no.~325, 2175--2215.

\bibitem{Kahane}
J.-P. Kahane,
\emph{Some Random Series of Functions}, 2nd ed.,
Cambridge Studies in Advanced Mathematics 5,
Cambridge University Press, Cambridge, 1985.

\bibitem{LedouxGaussian}
M. Ledoux,
\emph{Isoperimetry and Gaussian analysis},
in P. Bernard (ed.), \emph{Lectures on Probability Theory and Statistics
(Saint-Flour, 1994)}, Lecture Notes in Math. 1648,
Springer-Verlag, Berlin, 1996, 165--294.

\bibitem{ManteroTonge}
A. M. Mantero and A. M. Tonge,
\emph{The Schur multiplication in tensor algebras},
Studia Math. \textbf{68} (1980), no.~1, 1--24.

\bibitem{MastyloSzwedek}
M. Masty\l o and R. Szwedek,
\emph{Kahane--Salem--Zygmund polynomial inequalities via Rademacher processes},
J. Funct. Anal. \textbf{272} (2017), no.~11, 4483--4512.

\bibitem{PellegrinoRaposo}
D. Pellegrino and A. Raposo Jr.,
\emph{Constants of the Kahane--Salem--Zygmund inequality asymptotically bounded by $1$},
J. Funct. Anal. \textbf{282} (2022), no.~2, Paper No.~109293, 21 pp.

\bibitem{SalemZygmund}
R. Salem and A. Zygmund,
\emph{Some properties of trigonometric series whose terms have random signs},
Acta Math. \textbf{91} (1954), 245--301.

\bibitem{Varopoulos}
N. Th. Varopoulos,
\emph{On an inequality of von Neumann and an application of the metric theory
of tensor products to operators theory},
J. Funct. Anal. \textbf{16} (1974), no.~1, 83--100.

\bibitem{Vitale}
R. A. Vitale,
\emph{Some comparisons for Gaussian processes},
Proc. Amer. Math. Soc. \textbf{128} (2000), no.~10, 3043--3046.

\end{thebibliography}
\end{document}